\def\Cent          {\mathrm{Cent}}
\def\kk            {{\ensuremath\mathbbm{k}}}
\def\C             {{\ensuremath{\mathbb C}}}
\def\R             {{\ensuremath{\mathbb R}}}
\def\Q             {{\ensuremath{\mathbb Q}}}
\def\Z             {{\ensuremath{\mathbb Z}}}
\def\N             {{\ensuremath{\mathbb N}}}
\def\g             {{\ensuremath{\mathfrak g}}}
\def\Sl            {{\ensuremath\mathfrak{sl}}}
\newcommand\car[1] {\langle#1\rangle}
\def\df            {\,{:=}\,}
\def\eq            {\,{=}\,}
\def\Hom           {{\ensuremath{\mathrm{Hom}}}}
\def\Id            {\mbox{\sl Id}}
\def\id            {\mbox{\sl id}}
\def\one           {{\bf1}}
\newtheorem{thm}{Theorem}
\newtheorem{conv}[thm]{Convention}
\newtheorem{ass}[thm]{Assumption}
\newtheorem{lem}[thm]{Lemma}
\theoremstyle{definition}
\newtheorem{expl}[thm]{Example}
\newtheorem*{Expl}{Example}
\newtheorem{defi}[thm]{Definition}
\newtheorem{rem}[thm]{Remark}
\newtheorem{Rem}[thm]{Remark}
\newtheorem{que}[thm]{Question}
\newenvironment{narrow}[2]{
\begin{list}{}{%
 \setlength{\topsep}{0pt}%
 \setlength{\leftmargin}{#1}%
 \setlength{\rightmargin}{#2}%
 \setlength{\listparindent}{\parindent}%
 \setlength{\itemindent}{\parindent}%
 \setlength{\parsep}{\parskip}%
}%
\item[]}{\end{list}}
\newcommand{\hamburger}[4] 
{
  \thispagestyle{empty}
  \vspace*{-2cm}
  \begin{flushright}
    ZMP-HH / #2 \\
    Hamburger Beitr{\"a}ge zur Mathematik Nr. #3 \\
    #4 \\
  \end{flushright}
  \vspace{0.5cm}
  \begin{center}
    \Large \bf
    #1
  \end{center}
  \vspace{0.5cm}
  \begin{center}	
    Simon Lentner\footnote{Corresponding author:\\
      \texttt{simon.lentner@uni-hamburg.de},
    Phone: +49 40 42838 5178, Fax: +49 40 42838-5190}
    and Daniel Nett \\
    Algebra and Number Theory, 
    Hamburg University,\\
    Bundesstra{\ss}e 55, D-20146 Hamburg \\
  \end{center}
  \vspace{0.2cm}
}
\newcommand{\Multi}[2]{\multirow{#1}{*}{#2}}
\begin{document}

\enlargethispage{5\baselineskip}
\numberwithin{equation}{section}
\numberwithin{thm}{section}
                                     
\hamburger{New $R$-matrices for small quantum groups}%
          {14-19}{524}{September 2014}

\begin{abstract}
  It is widely accepted that small quantum groups should possess a quasitri-
  angular structure, even though this is technically not true. In this article
  we construct explicit $R$-matrices, sometimes several inequivalent ones, over
  certain natural extensions of small quantum groups by grouplike elements.
  The extensions are in correspondence to lattices between root and weight
  lattice. Our result generalizes a well-known calculation for $u_q(\Sl_2)$
  used in logarithmic conformal field theories. \\

  \noindent
  Keywords: Quantum group, R-matrix, braided category\\
  MSC Classification: 16T05
\end{abstract}
\title{}
\date{}
\maketitle

\tableofcontents
\newpage

\section*{Introduction and Summary}
  Hopf algebras with $R$-matrices, so called quasitriangular Hopf algebras,
  give rise to braided tensor categories, which have many interesting
  applications: Any braided vector space with a dual can be used to construct
  knot invariants and, using surgery, a (finite) braided tensor category gives
  rise to a invariant of $3$-manifolds, cf. \cite{Vir06} based on the well-known
  work \cite{RT90}. In \cite{Ros93,KR02} the case of the representation
  category of a quantum group is treated. For example, if the $R$-matrix for the quantum 
  group $U_q(\g)$ in the case $q=i,\g=\Sl_2$ is evaluated on the standard representation 
  depending on an additional deformation parameter $\lambda$, then one obtaines in this   
  way the Alexander-Conway-polynomial. Braided tensor categories with an
  additional non-degeneracy condition give rise to topological field theories
  \cite{Tur94, KL01}. Checking which $R$-matrices below fulfill this additional condition 
  would be an interesting follow-up to the present work.

  For quantum groups, Lusztig gives in \cite{Lus93} Sec. 32 essentially an
  $R$-matrix, but it is not clear that this gives rise to
  an $R$-matrix over the small quantum groups $u_q(\g)$ with $q$ an $\ell$-th
  root of unity. In \cite{Ros93} this has been shown to be true whenever
  $\ell$ is odd and prime to the determinant of the Cartan matrix. In other cases 
  Lusztig's small quantum group itself usually does not admit an $R$-matrix, in 
  many cases even the category is not braided. This has been resolved in
  two ways in literature:
 
  \begin{itemize}
    \item Several authors consider slightly smaller quotients (resp. a
      subcategory), i.e. $K^e=1$
      for $e$ half the exponent in Lusztig's definition,  where one can obtain
      indeed an $R$-matrix if $\ell$ is prime to the determinant of the Cartan
      matrix \cite{Ros93}. For some applications however, it is desirable that the
      quotient is taken precisely with Lusztig's choice and one wishes to
      focus on the even case.
    \item For $q$ an even root of unity, some authors consider $R$-matrices
      up to outer automorphism (\cite{Tan92,Res95}), or quadratic
      extensions of $u_q(\g)$, e.g. explicitly in the case of $u_q(\Sl_2)$ in
      \cite{RT91,FGST06} and more generally in \cite{GW98} for $u_q(\Sl_n)$.
      By \cite{Tur94} p. 511 Rosso has already suggested in 1993 that one
      should consider extensions of $u_q(\g)$ for general $\g$. 
  \end{itemize}
  
  In this article we determine \emph{all} possible $R$-matrices that can be obtained 
  through
  Lusztig's ansatz \cite{Lus93} Sec. 32.1, which means to vary the
  \emph{toral part} $R_0$ (see below), while at the same
  time considering extensions of $u_q(\g)$ that are Lie-theoretically
  motivated and explain the exceptional behaviour with respect to the
  determinant of the Cartan matrix.
  In many cases we find several inequivalent choices different from the
  standard choice of $R_0$ (most notably $\g=D_{2n}$), while other cases still
  do not admit $R$-matrices. In particular we find indeed that also even
  $\ell$ (or
  divisible by $4$ for multiply-laced $\g$) admit $R$-matrices
  for extensions of Lusztig's original quantum group.

  More precisely, the extensions $u_q(\g,\Lambda)$ of $u_q(\g)$ we consider
  depend on a choice of a
  lattice $\Lambda_R\subset\Lambda\subset\Lambda_W$ between root and weight
  lattice, which corresponds to a choice of a complex connected Lie group
  associated to $\g$. We first derive a necessary form of the $R$-matrix,
  depending only on the fundamental group $\Lambda_W/\Lambda_R$; this
  amounts to a question in additive combinatorics we have settled in 
  \cite{LN14}. The main calculations concluding the present article is to
  check sufficiency in terms of certain sublattices of $\Lambda$. These
  sublattices depend heavily on $\g$ and on the roots of unity in
  question, in particular in common divisors of $\ell$ and the determinant of
  the Cartan matrix, which is the order of  $\Lambda_W/\Lambda_R$.  

  \noindent
  This article is organized as follows.
 
  In Section \ref{prelim} we fix the Lie theoretic notation and prove some
  technical preliminaries. In particular, we introduce some sublattices of the
  weight lattice $\Lambda_W$ of a simple complex Lie algebra, e.g. the
  so-called $\ell$-centralizer $\Cent^q(\Lambda_R)$ of $\Lambda_R$ in
  $\Lambda_W$ (with respect to the braiding). We
  then give the definition of the finite dimensional quantum groups
  $u_q(\g,\Lambda,\Lambda')$ for lattices $\Lambda,~\Lambda'$, where
  $\Lambda'$ is a suitable sublattice of $\Cent^q(\Lambda_R)$. Choices
  of $\Lambda'$ correspond to the choice of a quotient, see above. We recall
  also the definition of an $R$-matrix.
 
  In Section \ref{ansatz} we review the ansatz $R=R_0\bar\Theta$ for
  $R$-matrices by Lusztig, with fixed $\Theta\in u_q(\g,\Lambda)^+\otimes
  u_q(\g,\Lambda)^-$ and free toral part
  $R_0=\sum_{\mu,\nu\in\Lambda/\Lambda'}f(\mu,\nu)K_\mu\otimes K_\nu$. We find
  equations for the free parameters $f(\mu,\nu)$ that are equivalent to $R$
  being an $R$-matrix and depend on the fundamental group
  $\pi_1=\Lambda_W/\Lambda_R$ of $\g$ and on some sublattices of $\Lambda_W$
  associated to $q$.  This ansatz was also used by M\"uller \cite{Mue98a,
  Mue98b} for determining $R$-matrices for quadratic extensions of
  $u_q(\Sl_n)$.
 
  In Section \ref{group-equations} we will first consider those equations on
  $f(\mu,\nu)$, that only
  depend on $\pi_1$ as a group, the so-called \emph{group-equations} for the
  coefficients of the ansatz in Section \ref{ansatz}.  We will give all
  solutions of the group-equations of a group $G$, where $G$ is cyclic or
  equal to $\Z_2\times\Z_2$, since these are the relevant cases for $G=\pi_1$
  the fundamental group of the Lie algebras in interest. 
  The case $\g=A_n$ with fundamental group $\Z_{n+1}$ is particularly hard
  and depends on a question in additive combinatorics, which we settled
  in \cite{LN14}.

  We then consider in Section \ref{diamond-equations} a certain 
  constellation of sublattices of $\Lambda$, which we call a diamond. Depending on these  
  sublattices we define \emph{diamond-equations}, 
  derive a necessary condition for the existence of solutions and give again 
  results for the cyclic case. 
 
  In Section \ref{solutions} we give the main result of this article in Theorem
  \ref{Thm:solutions}, a list of $R$-matrices obtained by Lusztig's ansatz.
  These are obtained by solving the corresponding group- and
  diamond-equations, depending on the fundamental group $\pi_1$ of $\g$, the
  lattice $\Lambda_R\subset \Lambda\subset \Lambda_W$, kernel
  $\Lambda'\subset\Cent^q(\Lambda_W)\cap\Lambda_R$ and the $\ell$-th root of
  unity $q$. Here, $\Cent^q(\Lambda_W)$ denotes the lattice orthogonal to
  $\Lambda_W$ mod $\ell$, i.e. the set of $\lambda\in\Lambda$ with
  $\ell\mid(\lambda,\mu)$ for all weights $\mu$.

  We develop general results that allow us to compute all $R$-matrices
  fulfilling Lusztig's ansatz depending on $\g,\Lambda,\Lambda'$. Under the additional 
  assumption \ref{ass:Lambda'} on $\Lambda'$, which also simplifies
  some calculations, we find that in fact 
  $\Lambda'=\Lambda_R^{[\ell]}$ is the only choice that allows the existence of an $R$-   
  matrix.
  
  \renewcommand{\thethm}{\Alph{thm}}
  \begin{thm}\label{Thm:solutions}
    Let $\g$ be a finite-dimensional simple complex Lie algebra with root
    lattice $\Lambda_R$, weight lattice $\Lambda_W$  and fundamental group
    $\pi_1=\Lambda_W/\Lambda_R$. Let $q$ be an $\ell$-th root of unity,
    $\ell\in\N$, $\ell>2$. 
    Then we have the following $R$-matrix of the form $R=R_0\bar\Theta$, with
    $\Theta$ as in Theorem \ref{Thm:Theta}:
    \begin{equation*}
      R = 
      \left(
      \frac 1{|\Lambda/\Lambda'|}
      \sum_{(\mu,\nu)\in (\Lambda_1/\Lambda'\times\Lambda_2/\Lambda')}
      q^{-(\mu,\nu)} \omega(\bar\mu,\bar\nu) ~ 
      K_{\mu}\otimes K_{\nu}
      \right)
      \cdot \bar\Theta, 
    \end{equation*}
    for the quantum group $u_q(\g,\Lambda,\Lambda')$ with $\Lambda_i$ the
    preimage of a certain subgroup $H_i\subset\pi_1$ in $\Lambda_W$ ($i=1,2$),
    a certain group-pairing $\omega\colon H_1\times H_2\to \C^{\times}$ and
    $\Lambda'=\Lambda_R^{[\ell]}$ as in Def. \ref{elllattice}. 
    
    In the following table we list for all root systems the following data,
    depending on $\ell$: Possible choices of $H_1,H_2$ (in terms of
    fundamental weights $\lambda_k$), the group-pairing $\omega$, and the
    number of solutions $\#$.  If the number has a superscript $*$, we obtain
    $R$-matrices for Lusztig's original choice of $\Lambda'$. For $\g=D_n,\,
    2\mid n$, with $\pi_1=\Z_2\times\Z_2$ we get the only cases $H_1\neq H_2$
    and denote by
    $\lambda\neq\lambda'\in\{\lambda_{n-1},\lambda_n,\lambda_{n-1}+\lambda_n\}$
    arbitrary elements of order $2$ in $\pi_1$.
    \\

    \enlargethispage{.5cm}

  \renewcommand{\arraystretch}{1.1}
  \begin{longtable}{c||c|c||c|c|l}
    \multicolumn{1}{c||}{$\g$} &$\ell$ &\# &$H_i\cong$
    &$H_i\,{\scriptstyle (i=1,2)}$ 
                               &\multicolumn{1}{c}{$\omega$}
      \\ \hline \hline
      
      & \Multi{2}{$\ell$ {\rm odd}}
      & 
      & \Multi{2}{$\Z_d$}
      & \Multi{2}{$\car{\frac{n+1}d \lambda_n}$}
      & \Multi{2}{$\omega(\lambda_n,\lambda_n)=\xi_d^k$, {\rm if}}
      \\
      $A_{n\geq 1}$
      & 
      & 
      & \multicolumn{3}{c}{\Multi{2}{$d\mid (n+1),~1\leq k\leq d$ {\rm and}}}
      \\\cline{2-3}
      $\pi_1=\Z_{n+1}$
      & \Multi{2}{$\ell$ {\rm even}}
      & $^*$
      & \multicolumn{3}{c}{\Multi{2}{$\gcd(n+1,d\ell,k\ell-\frac{n+1}d n)=1$}}
      \\
      &&&\multicolumn{3}{c}{}
      \\ \hline

      & \Multi{2}{$\ell$ {\rm odd}}
      & $1$
      & $\Z_1$
      & $\{0\}$
      & $\omega(0,0)=1$
      \\ \cline{3-6}
      &
      & $1$
      & $\Z_2$
      & $\car{\lambda_n}$
      & $\omega(\lambda_n,\lambda_n)=(-1)^{n-1}$
      \\ \cline{2-6}
      $B_{n\geq 2}$
      & \Multi{2}{$\ell\equiv 2 \mod 4$}
      & $2$
      & $\Z_2$
      & $\car{\lambda_n}$
      & $\omega(\lambda_n,\lambda_n)=\pm1$
      \\ \cline{3-6}
      $\pi_1=\Z_2$
      & 
      & $1$
      & $\Z_1$
      & $\{0\}$
      & $\omega(0,0)=1$, {\rm if} $n$ {\rm even}
      \\ \cline{2-6}
      & $\ell\equiv 0 \mod 4$
      & $2^*$
      & $\Z_2$
      & $\car{\lambda_n}$
      & $\omega(\lambda_n,\lambda_n)=\pm1$
      \\ \cline{3-6}
      & $\ell\neq 4$
      & $1^*$
      & $\Z_1$
      & $\{0\}$
      & $\omega(0,0)=1$, {\rm if} $n$ {\rm even}
      \\ \hline

    \enlargethispage{2cm}
      & \Multi{2}{$\ell$ {\rm odd}}
      & $1$
      & $\Z_1$
      & $\{0\}$
      & $\omega(0,0)=1$
      \\ \cline{3-6}
      &
      & $1$
      & $\Z_2$
      & $\car{\lambda_n}$
      & $\omega(\lambda_n,\lambda_n)=-1$
      \\ \cline{2-6}
      $C_{n\geq 3}$
      & \Multi{2}{$\ell\equiv 2\mod 4$}
      & $1$
      & $\Z_1$
      & $\{0\}$
      & $\omega(0,0)=1$
      \\ \cline{3-6}
      $\pi_1=\Z_2$
      &
      & $1$
      & $\Z_2$
      & $\car{\lambda_n}$
      & $\omega(\lambda_n,\lambda_n)=(-1)^{n-1}$
      \\ \cline{2-6}
      & $\ell\equiv 0 \mod 4$
      & $2^*$
      & $\Z_2$
      & $\car{\lambda_n}$
      & $\omega(\lambda_n,\lambda_n)=\pm1$
      \\ \cline{3-6}
      & $\ell\neq 4$
      & $1^*$
      & $\Z_1$
      & $\{0\}$
      & $\omega(0,0)=1$, {\rm if} $n$ {\rm even}
      \\ \hline

      & 
      & $1$
      & $\Z_1$
      & $\{0\}$
      & $\omega(0,0)=1$
      \\ \cline{3-6}
      &
      & $3$
      & $\Z_2$
      & $\car{\lambda}$
      & $\omega(\lambda,\lambda)= -1\!\!\!$
      \\ \cline{3-6}
      &
      & $6$
      & $\Z_2\neq\Z_2'$
      & $\car{\lambda},\car{\lambda'}$
      & $\omega(\lambda,\lambda')= 1\!\!\!$
      \\ \cline{3-6}
      & 
      & $1$
      & $\Z_2\times\Z_2$
      & $\car{\lambda_{n-1},\lambda_n}$
      & $\omega(\lambda_i,\lambda_j)=1$
      \\ \cline{3-6}
      & 
      & $1$
      & $\Z_2\times\Z_2$
      & $\car{\lambda_{n-1},\lambda_n}$
      & $\omega(\lambda_i,\lambda_j)=-1$
      \\ \cline{3-6}
      & $\ell$ {\rm odd}
      & \Multi{4}{$2$}
      & \Multi{4}{$\Z_2\times\Z_2$}
      & \Multi{4}{$\car{\lambda_{n-1},\lambda_n}$}
      & $\omega(\lambda_{n-1},\lambda_{n-1})= \pm1$
      \\
      &&&&
      & $\omega(\lambda_{n-1},\lambda_n)= 1$
      \\
      $D_{n\geq 4}$ 
      &&&&
      & $\omega(\lambda_{n},\lambda_{n-1})= 1$
      \\
      $n$ {\rm even} 
      &&&&
      & $\omega(\lambda_n,\lambda_n)= \mp1$
      \\ \cline{3-6}
      $\pi_1=\Z_2\times\Z_2$ 
      &
      & \Multi{4}{$2$}
      & \Multi{4}{$\Z_2\times\Z_2$}
      & \Multi{4}{$\car{\lambda_{n-1},\lambda_n}$}
      & $\omega(\lambda_{n-1},\lambda_{n-1})= -1$
      \\
      &&&&
      & $\omega(\lambda_{n-1},\lambda_n)= \pm1$
      \\
      &&&&
      & $\omega(\lambda_n,\lambda_{n-1})= \mp1$
      \\
      &&&&
      & $\omega(\lambda_n,\lambda_n)= -1$
      \\ \cline{2-6}
      & $\ell$ {\rm even}
      & $16^*$
      & $\Z_2\times\Z_2$
      & $\car{\lambda_{n-1},\lambda_n}$
      & $\omega(\lambda_i,\lambda_j)\in\{\pm1\}$
      \\ \hline

      & \Multi{3}{$\ell$ {\rm odd}}
      & $1$
      & $\Z_1$
      & $\{0\}$
      & $\omega(0,0)=1$
      \\ \cline{3-6}
      $D_{n\geq 5}$ 
      &
      & $1$
      & $\Z_2$
      & $\car{2\lambda_n}$
      & $\omega(2\lambda_n,2\lambda_n)=-1$
      \\ \cline{3-6}
      $n$ {\rm odd}
      &
      & $2$
      & $\Z_4$
      & $\car{\lambda_n}$
      & $\omega(\lambda_n,\lambda_n)=\pm1$
      \\ \cline{2-6}
      $\pi_1=\Z_4$
      & $\ell\equiv2\mod 4$
      & $4^*$
      & $\Z_4$
      & $\car{\lambda_n}$
      & $\omega(\lambda_n,\lambda_n)=c, ~c^4=1$ 
      \\ \cline{2-6}
      & $\ell\equiv0\mod4$
      & $4^*$
      & $\Z_4$
      & $\car{\lambda_n}$
      & $\omega(\lambda_n,\lambda_n)=c, ~c^4=1$
      \\ \hline

      & \Multi{2}{$\ell$ {\rm odd}, $3\nmid \ell$}
      & $1$
      & $\Z_1$
      & $\{0\}$
      & $\omega(0,0)=1$
      \\ \cline{3-6}
      &
      & $2$
      & $\Z_3$
      & $\car{\lambda_6}$
      & $\omega(\lambda_6,\lambda_6)= 1,\exp(\frac{2\pi i}3)$
      \\ \cline{2-6}
      $E_6$
      & \Multi{2}{$\ell$ {\rm even}, $3\nmid \ell$}
      & $1^*$
      & $\Z_1$
      & $\{0\}$
      & $\omega(0,0)=1$
      \\ \cline{3-6}
      $\pi_1=\Z_3$
      &
      & $2^*$
      & $\Z_3$
      & $\car{\lambda_6}$
      & $\omega(\lambda_6,\lambda_6)= 1,\exp(2\frac{2\pi i}3)$
      \\ \cline{2-6}
      & $\ell$ {\rm odd}, $3\mid \ell$
      & $3$
      & $\Z_3$
      & $\car{\lambda_6}$
      & $\omega(\lambda_6,\lambda_6)= c, ~c^3=1$
      \\ \cline{2-6}
      & $\ell$ {\rm even}, $3\mid \ell$
      & $3^*$
      & $\Z_3$
      & $\car{\lambda_6}$
      & $\omega(\lambda_6,\lambda_6)= c, ~c^3=1$
      \\ \hline

      \Multi{2}{$E_7$}
      & \Multi{2}{$\ell$ {\rm odd}}
      & $1$
      & $\Z_1$
      & $\{0\}$
      & $\omega(0,0)=1$
      \\ \cline{3-6}
      \Multi{2}{$\pi_1=\Z_2$}
      &
      & $1$
      & $\Z_2$
      & $\car{\lambda_7}$
      & $\omega(\lambda_7,\lambda_7)=1$
      \\ \cline{2-6}
      & $\ell$ {\rm even}
      & $2^*$
      & $\Z_2$
      & $\car{\lambda_7}$
      & $\omega(\lambda_7,\lambda_7)=\pm1$
      \\ \hline

      $E_8$
      & $\ell$ {\rm odd}
      & $1$
      & $\Z_1$
      & $\{0\}$
      & $\omega(0,0)=1$
      \\ \cline{2-6}
      $\pi_1=\Z_1$
      & $\ell$ {\rm even}
      & $1^*$
      & $\Z_1$
      & $\{0\}$
      & $\omega(0,0)=1$
      \\ \hline

      & $\ell$ {\rm odd}
      & $1$
      & $\Z_1$
      & $\{0\}$
      & $\omega(0,0)=1$
      \\ \cline{2-6}
      $F_4$
      & $\ell\equiv 2\mod 4$
      & $1$
      & $\Z_1$
      & $\{0\}$
      & $\omega(0,0)=1$
      \\ \cline{2-6}
      $\pi_1=\Z_1$
      & $\ell\equiv 0\mod 4$
      & \Multi{2}{$1^*$}
      & \Multi{2}{$\Z_1$}
      & \Multi{2}{$\{0\}$}
      & \Multi{2}{$\omega(0,0)=1$}
      \\ 
      &$\ell\neq 4$&&&& \\ \hline

      & $\ell$ {\rm odd}
      & \Multi{2}{$1$}
      & \Multi{2}{$\Z_1$}
      & \Multi{2}{$\{0\}$}
      & \Multi{2}{$\omega(0,0)=1$}
      \\ 
      $G_2$
      &$\ell\neq 3$
      &&&& \\ \cline{2-6}
      $\pi_1=\Z_1$
      & $\ell$ {\rm even}
      & \Multi{2}{$1^*$}
      & \Multi{2}{$\Z_1$}
      & \Multi{2}{$\{0\}$}
      & \Multi{2}{$\omega(0,0)=1$}
      \\ 
      &$\ell\neq 4,6$
      &&&& \\ \hline

    \caption{Solutions for $R_0$-matrices}
    \label{tbl:Solutions}
    \end{longtable}
  \end{thm}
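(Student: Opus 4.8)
The plan is to prove the theorem by reducing the $R$-matrix axioms to a finite system of scalar equations on the coefficients $f(\mu,\nu)$ of the toral part $R_0$, and then solving this system explicitly for each root system. By the ansatz $R=R_0\bar\Theta$ recalled in Section \ref{ansatz}, with $\bar\Theta$ fixed as in Theorem \ref{Thm:Theta}, the requirements that $R$ intertwine the coproduct with its opposite and satisfy the hexagon identities become, after cancelling the contribution of $\bar\Theta$, a set of multiplicative relations among the $f(\mu,\nu)$ indexed by $\Lambda/\Lambda'\times\Lambda/\Lambda'$. First I would make this reduction precise: quasi-cocommutativity forces the Cartan factor $q^{-(\mu,\nu)}$, while the hexagon relations force the remaining part of $f$ to be a group-pairing $\omega$ that descends to $\pi_1=\Lambda_W/\Lambda_R$ and is supported on a product of subgroups $H_1\times H_2$. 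Thus the free data reduces exactly to the triple $(H_1,H_2,\omega)$ appearing in the statement.

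Next I would organize the resulting relations into the two announced families. The \emph{group-equations} are those that see $\pi_1$ only as an abstract abelian group; they control the existence and symmetry of $\omega$. Since every finite-dimensional simple complex Lie algebra has $\pi_1$ cyclic or isomorphic to $\Z_2\times\Z_2$, I would invoke the complete solution of the group-equations from Section \ref{group-equations}: for $\pi_1=\Z_d$ the admissible pairings are enumerated by a root-of-unity parameter subject to a $\gcd$-condition, and for $\pi_1=\Z_2\times\Z_2$ by a short finite list. The genuinely hard cyclic case is $\g=A_n$, where $\pi_1=\Z_{n+1}$ and the count of solutions is governed by the additive-combinatorial result established in \cite{LN14}; I would cite that directly rather than reprove it, which also explains the gcd-condition $\gcd(n+1,d\ell,k\ell-\tfrac{n+1}{d}n)=1$ in the table.

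The \emph{diamond-equations} are the part that still depends on the embedding $\Lambda_R\subset\Lambda\subset\Lambda_W$ and on the arithmetic of $q$, encoded in the sublattices $\Cent^q(\Lambda_R)$, $\Cent^q(\Lambda_W)$ and the kernel $\Lambda'$. Here I would first derive the necessary condition for solvability from the constellation of sublattices (the diamond) treated in Section \ref{diamond-equations}, and then verify sufficiency by exhibiting the pairing $\omega$ explicitly. Imposing Assumption \ref{ass:Lambda'} on $\Lambda'$ pins down $\Lambda'=\Lambda_R^{[\ell]}$ of Def. \ref{elllattice} as the unique kernel admitting a solution, which is the choice appearing in the statement.

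Finally I would assemble the table by running the combined group- and diamond-analysis through each root system in turn. For fixed $\g$ this means computing $\pi_1$, the determinant of the Cartan matrix $|\pi_1|$, and the sublattices $\Cent^q(\Lambda_R)$, $\Cent^q(\Lambda_W)$, $\Lambda_R^{[\ell]}$ as functions of $\ell$; the outcome depends only on the residue of $\ell$ modulo the small primes dividing $|\pi_1|$, which explains the splitting into $\ell$ odd, $\ell\equiv 2\bmod 4$, $\ell\equiv 0\bmod 4$, and (for $E_6$) whether $3\mid\ell$. For each case I would read off the admissible $H_1,H_2\subset\pi_1$ and the pairing $\omega$, count the solutions, and flag with $*$ those that already survive for Lusztig's original choice of $\Lambda'$. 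I expect the main obstacle to be the interaction between $\ell$ and $|\pi_1|=\det(\mathrm{Cartan})$ when they share a common factor, since there the centralizer sublattices degenerate and the diamond-equations become most restrictive; the richest such case is $\g=D_n$ with $n$ even and $\pi_1=\Z_2\times\Z_2$, the only source of pairings with $H_1\neq H_2$, which requires the most careful bookkeeping.
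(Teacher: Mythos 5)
Your proposal follows essentially the same route as the paper: Lusztig's ansatz reduces the $R$-matrix axioms to the coefficient equations of Theorem \ref{Thm:R0}, these descend to $\pi_1$ (Lemma \ref{g-eq}) and split into group-equations --- solved completely for cyclic groups via \cite{LN14} and for $\Z_2\times\Z_2$ by direct check (Theorem \ref{allsolutionsgrpeq}) --- and diamond-equations, whose necessary condition forces $\Lambda'=\Lambda_R^{[\ell]}$ (Lemma \ref{necessCrit}), followed by the same case-by-case assembly of the table. One misattribution to correct: the condition $\gcd(n+1,d\ell,k\ell-\frac{n+1}{d}n)=1$ does \emph{not} come from the additive-combinatorial result of \cite{LN14}, which is $\ell$-independent and only establishes that every solution of the group-equations for cyclic $\pi_1$ is a subgroup-supported pairing; the gcd-condition arises from solving the diamond-equations in the cyclic case (Lemma \ref{solutionsDiamondsCyclic}), i.e., from exactly the sufficiency analysis that you correctly assign to the diamond step, so your plan is internally consistent once this is relocated.
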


 \setcounter{section}{0}

 The cases $B_n,C_n,F_4$, $\ell=4$ and $G_2$, $\ell=3,6$ and $\ell=4$
 respectively, can be obtained in the table for $A_1^{\times n},D_{n},
 D_4$, and again $A_2$ and $A_3$ respectively (cf. \cite{Len14} for details).

 Note, that Lusztig's $R$-matrix for $\Lambda=\Lambda_R$ correspond to the
 case $H=\Z_1$ and $\omega=1$. The known quadratic extension for $\Sl_2$ 
 is the case $A_1$ with $H=\Z_2$ in the example below.

 \begin{Rem}
   We indicate in which sense our results are \emph{not} complete:
   \begin{itemize}
     \item Technically, one could even allow $\Lambda_R\subset \Lambda\subset
       \Lambda_W^\vee$, but then one would loose the topological interpretation as
       different choices of a Lie group associated to $\g$.
     \item Our additional assumption \ref{ass:Lambda'} on the considered quotients 
       $\Lambda'\subset\Cent^q(\Lambda_W)\cap\Lambda_R$ was chosen 
       to simplify calculations and prove uniqueness. In general
       $\Lambda'\in\Cent^q(\Lambda)$ would suffice (and could yield more
       solutions), but one would have to deal with possible $2$-cocycles in
       $H^2(\Lambda/\Lambda',\pi_1)$ in Lemma \ref{g-eq}. 
   \end{itemize}
 \end{Rem}
 \begin{que}
  Are \emph{all} $R$-matrices of $u_q(\g)$ given by Lusztig's ansatz and hence in our list?
 \end{que}
 \begin{que}
  Which $R$-matrices above give rise to \emph{equivalent} braided tensor categories?
 \end{que}
 \begin{que}
  Which $R$-matrices in this article are \emph{factorizable}
  an give hence rise to (non-semisimple) modular tensor categories? What are results for 
  other Nichols algebras?
 \end{que}
 \begin{Expl}
   For $\g=\Sl_2$ with root system $A_1$ the fundamental group is
   $\pi_1=\Z_2$. Let $\alpha$ be the simple root, generating the root lattice
   $\Lambda_R$, and $\lambda=\frac12\alpha$ the fundamental dominant weight,
   generating the weight lattice $\Lambda_W$. We will give the $R$-matrices
   for the quantum groups $u=u_q(\g,\Lambda,\Lambda')$ for $\ell$-th root of
   unity $q$ and lattices $\Lambda_R\subset\Lambda\subset\Lambda_W$ and
   $\Lambda'=\Lambda_R^{[\ell]}$, which equals in the simply laced case
   $\ell\Lambda_R$.

   The quasi $R$-matrix $\Theta$ (see Theorem \ref{Thm:Theta}) depends only on
   the root lattice and exists in $u^+\otimes u^-$ with Borel parts $u^{\pm}$,
   generated by $E_{\alpha},F_{\alpha}$. With
   $\ell_{\alpha}=\ell/\gcd(\ell,2d_{\alpha})=\ell/\gcd(\ell,2)$ we have
   \begin{equation*}
     \Theta = \sum_{k=0}^{\ell_{\alpha-1}}(-1)^k \frac{(q-q^{-1})^k}{[k]_q!} 
     q^{-k(k-1)/2}
     E_{\alpha}^k \,\otimes\, F_{\alpha}^k
     \quad{\rm and}\quad
     \bar\Theta = \sum_{k=0}^{\ell_{\alpha-1}}\frac{(q-q^{-1})^k}{[k]_q!}
     q^{k(k-1)/2}
     E_{\alpha}^k \,\otimes\, F_{\alpha}^k,
   \end{equation*}
   with $q$-factorial $[k]_q!$.
   The toral part $R_0$-is given by
   \begin{equation*}
     R_0 = \frac {1}{|\Lambda/\Lambda_R^{[\ell]}|} 
     \sum_{\mu,\nu\in \Lambda/\Lambda'}
     q^{-(\mu,\nu)} \, \omega(\bar\mu,\bar\nu) \, 
     K_{\mu}\,\otimes\, K_{\nu},
   \end{equation*}
   for $H$ and $\omega\colon H\times H\to\C^{\times}$ as in Table
   \ref{tbl:Solutions}. The
   possible solutions depend on $\ell$. We now check the condition
   $\gcd(2,d\ell,k\ell-2/d)=1$ from the theorem
   above ($n=1$ and $d=1,2$). For odd $\ell$, we get the following
   solutions by Theorem \ref{Thm:solutions}:
   \begin{align*}
     H&=\Z_1,\qquad \omega\colon\Z_1\times\Z_1\to \C^{\times},~
     \omega(0,0)=1,\\
     H&=\Z_2,\qquad \omega\colon\Z_2\times\Z_2\to \C^{\times},~
     \omega(\lambda,\lambda)=1.
   \end{align*}
   For even $\ell$ the solution for $H=\Z_1$, i.e. for $\Lambda=\Lambda_R$,
   does not exist (since $2\mid \ell$ and $2\mid (\ell-2)$), rather we
   get both possible solutions on the full support $H=\Z_2$:
   \begin{align*}
     H&=\Z_2,\qquad \omega\colon\Z_2\times\Z_2\to \C^{\times},~
     \omega(\lambda,\lambda)=\pm1.
   \end{align*}
   In these cases, the $R$-matrices are explicitly given by
   \begin{align*}
     R 
     &= \frac {1}{2\ell} \sum_{k=0}^{\ell_{\alpha}-1}
        \sum_{i,j=0}^{2\ell-1} \frac{(q-q^{-1})^k}{[k]_q!}
        q^{k(k-1)/2+k(j-i)-\frac{ij}2} \, (\pm1)^{ij} \, 
       E_{\alpha}^k K_{\lambda}^i  \,\otimes\, F_{\alpha}^k K_{\lambda}^j 
   \end{align*}
 \end{Expl}

 \paragraph{\bf Acknowledgements.} The first author is supported by the DFG Research 
 Training Group 1670. We thank Christoph Schweigert for
 several helpful discussions.

 \numberwithin{thm}{section}
 \renewcommand{\thethm}{\arabic{section}.\arabic{thm}}

\section{Preliminaries}\label{prelim}

  At first, we fix a convention.
  \begin{conv}\label{conv}
      In the following, $q$ is an $\ell$-th root of unity.  We fix
      $q=\exp(\frac{2\pi i}{\ell})$ and for $a\in\R$ we set
      $q^a=\exp(\frac{2\pi ia}{\ell})$, $\ell>2$.
  \end{conv}

  \subsection{Lie Theory}
    Let $\g$ be a finite-dimensional, semisimple complex Lie algebra with
    simple roots $\alpha_i$, indexed by $i\in I$, $|I|=n$, and a
    set of positive roots $\Phi^{+}$. Denote the \emph{Killing form} by
    $(-,-)$, normalized such that $(\alpha,\alpha)=2$ for the short roots
    $\alpha$. The \emph{Cartan matrix} is given by
    \begin{equation*}
      a_{ij} = \car{\alpha_i,\alpha_j} 
             = 2\frac{(\alpha_i,\alpha_j)}{(\alpha_i,\alpha_i)}.
    \end{equation*}
    For a root $\alpha$ we call $d_\alpha:=(\alpha,\alpha)/2$ with
    $d_{\alpha}\in\{1,2,3\}$. Especially, $d_i:=d_{\alpha_i}$ and in this
    notation $(\alpha_i,\alpha_j)=d_ia_{ij}$.
    The fundamental dominant weights $\lambda_i,~i\in I$, are given by
    the condition $2(\alpha_i,\lambda_j)/(\alpha_i,\alpha_i)=\delta_{ij}$,
    hence the Cartan matrix expresses the change of basis from roots to weights.

    \begin{defi}
      The \emph{root lattice} $\Lambda_R=\Lambda_R(\g)$ of the Lie algebra
      $\g$ is the abelian group with rank
      $\mathrm{rank}(\Lambda_R)=\mathrm{rank}(\g)=|I|$, generated by the
      simple roots $\alpha_i$, $i\in I$. 
    \end{defi}

    \begin{defi}
      The \emph{weight lattice} $\Lambda_W=\Lambda_W(\g)$ of the Lie algebra
      $\g$ is the abelian group with rank
      $\mathrm{rank}(\Lambda_W)=\mathrm{rank}(\g)$, generated by the
      fundamental dominant weights $\lambda_i$, $i\in I$.   
    \end{defi} 

    The Killing form induces an integral pairing of abelian groups, turning
    $\Lambda_R$ into an \emph{integral lattice}. 
    It is standard fact of Lie theory (cf. \cite{Hum72}, Section 13.1) that
    the root lattice is contained in the weight lattice. 

    \begin{defi}\label{elllattice}
      Let $\Lambda_R$, $\Lambda_W$ the root, resp. weight, lattice of the Lie
      algebra $\g$ with generators $\alpha_i$, resp. $\lambda_i$, for $i\in
      I$.
      \begin{enumerate}[(i)]
        \item 
          Following Lusztig, we define $\ell_i :=\ell/\gcd(\ell,2d_i)$, which
          is the order of $q^{2d_i}$, where $q$ is a primitive $\ell$-th root
          of unity. 
          More generally, we define for any root
          $\ell_{\alpha}:=\ell/\gcd(\ell,2d_{\alpha})$.
          For any positive integer $\ell$, the \emph{$\ell$-lattice} 
          $\Lambda_R^{(\ell)}$, resp.  $\Lambda_W^{(\ell)}$, is defined
          as 
          \begin{equation}
            \Lambda_R^{(\ell)} =\left\langle
            \ell_i\alpha_i,~ i\in I
            \right\rangle
            \quad\text{resp.}\quad
            \Lambda_W^{(\ell)} =\left\langle
            \ell_i\lambda_i,~ i\in I
            \right\rangle.
          \end{equation}
        \item For any positive integer $\ell$, the lattice
          $\Lambda_R^{[\ell]}$, resp.  $\Lambda_W^{[\ell]}$, is defined as
          \begin{equation}
            \Lambda_R^{[\ell]} =\left\langle
            \frac{\ell}{\gcd(\ell,d_i)}\alpha_i,~ i\in I
            \right\rangle
            \quad\text{resp.}\quad
            \Lambda_W^{[\ell]} =\left\langle
            \frac{\ell}{\gcd(\ell,d_i)}\lambda_i,~ i\in I
            \right\rangle.
          \end{equation}
      \end{enumerate}
    \end{defi}

    \begin{defi}
      For $\Lambda_1, \Lambda_2\subset\Lambda_W$ with
      $\Lambda_2\subset\Lambda_1$ we define $\Cent^q_{\Lambda_1}(\Lambda_2)=
      \{\eta\in\Lambda_1~|
      ~(\eta,\lambda)\in\ell\Z~\forall\lambda\in\Lambda_2\}$.
      In the situation $\Lambda_1=\Lambda_W$ we simply write 
      $\Cent^q_{\Lambda_W}(\Lambda_2)=\Cent^q(\Lambda_2)$.
    \end{defi}

    Especially, the set $\car{K_{\eta} ~|~ \eta\in\Cent^q(\Lambda_R)}$ consists
    of the central group elements of the quantum group $U_q(\g,\Lambda_W)$,
    cf. Section \ref{qgrp}.

    \begin{lem}\label{CentR}
      For a Lie algebra $\g$ we have
        $\Cent^q(\Lambda_R)=\Lambda_{W}^{[\ell]}$.
      We call the elements of $\Cent^q(\Lambda_R)$ \emph{central weights}.
    \end{lem}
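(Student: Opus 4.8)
The plan is to translate the defining condition of $\Cent^q(\Lambda_R)$ into a coordinatewise divisibility condition on the coefficients of $\eta$ in the basis of fundamental weights, and then to read off that the resulting sublattice is exactly $\Lambda_W^{[\ell]}$.

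First I would use that $\Lambda_R$ is generated by the simple roots, so that for $\eta\in\Lambda_W$ the membership $\eta\in\Cent^q(\Lambda_R)$ is equivalent to $(\eta,\alpha_i)\in\ell\Z$ for all $i\in I$. Writing $\eta=\sum_{j\in I}c_j\lambda_j$ with $c_j\in\Z$ and invoking the defining property $2(\alpha_i,\lambda_j)/(\alpha_i,\alpha_i)=\delta_{ij}$, which together with $(\alpha_i,\alpha_i)=2d_i$ gives $(\alpha_i,\lambda_j)=\delta_{ij}d_i$, the pairing collapses to $(\eta,\alpha_i)=c_id_i$. Hence the centralizer condition becomes the system $\ell\mid c_id_i$ for every $i\in I$.

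The only genuinely arithmetic step is to rewrite each of these: I claim that $\ell\mid c_id_i$ holds if and only if $\frac{\ell}{\gcd(\ell,d_i)}\mid c_i$. Setting $g_i=\gcd(\ell,d_i)$ and cancelling $g_i$, the condition reads $\frac{\ell}{g_i}\mid c_i\frac{d_i}{g_i}$, and since $\gcd(\ell/g_i,d_i/g_i)=1$ this is equivalent to $\frac{\ell}{g_i}\mid c_i$.

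Assembling these equivalences, $\eta\in\Cent^q(\Lambda_R)$ if and only if each coefficient $c_i$ is divisible by $\frac{\ell}{\gcd(\ell,d_i)}$, which is precisely the statement $\eta\in\car{\frac{\ell}{\gcd(\ell,d_i)}\lambda_i,~i\in I}=\Lambda_W^{[\ell]}$ from Definition \ref{elllattice}. I expect no real obstacle here; the single point demanding care is that the pairing $(\alpha_i,\lambda_j)$ produces the factor $d_i$ rather than $1$, so that the short/long root normalization enters exactly through $\gcd(\ell,d_i)$ --- this is what makes $\Lambda_W^{[\ell]}$ appear and distinguishes it from $\Lambda_W^{(\ell)}$.
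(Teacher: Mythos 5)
Your proof is correct and follows the same route as the paper: expand $\eta$ in fundamental weights, use $(\alpha_i,\lambda_j)=\delta_{ij}d_i$ to reduce the centralizer condition to $\ell\mid c_id_i$, and conclude $\frac{\ell}{\gcd(\ell,d_i)}\mid c_i$. Your explicit gcd-cancellation argument for that last equivalence is the only addition; the paper states it without justification.
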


    \begin{proof}
      Let $\lambda=\sum_{j\in I}m_j\lambda_j\in\Lambda_W$ with fundamental
      weights $\lambda_i$. For a simple root $\alpha_i$ we have
      $(\alpha_i,\lambda) = (\alpha_i,\sum_{j\in I}m_j \lambda_j) = d_im_i$.
      Thus, $\lambda$ is central weight if $\ell\mid d_im_i$ for all $i$,
      hence 
        $(\ell/\gcd(\ell,d_i))\mid m_i$
      for all $i$.
    \end{proof}

    The same calculation gives the following lemma.

    \begin{lem}\label{CentW}
      For a Lie algebra $\g$ we have
        $\Cent^q(\Lambda_W)\cap\Lambda_R=\Lambda_{R}^{[\ell]}$.
    \end{lem}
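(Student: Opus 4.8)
The plan is to mirror the computation already done for Lemma \ref{CentR}, interchanging the roles of roots and weights. Unwinding the definition, $\Cent^q(\Lambda_W)\cap\Lambda_R=\{\eta\in\Lambda_R\mid(\eta,\lambda)\in\ell\Z\ \forall\lambda\in\Lambda_W\}$. Since the fundamental weights $\lambda_j$ generate $\Lambda_W$ and the pairing is $\Z$-bilinear, the first step is to observe that it suffices to impose the condition $(\eta,\lambda_j)\in\ell\Z$ for every $j\in I$, rather than for all $\lambda\in\Lambda_W$.

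Next I would write a general element $\eta\in\Lambda_R$ as $\eta=\sum_{i\in I}n_i\alpha_i$ and evaluate the pairing against a fundamental weight. Using $(\alpha_i,\alpha_i)=2d_i$ together with the defining property $2(\alpha_i,\lambda_j)/(\alpha_i,\alpha_i)=\delta_{ij}$, one obtains $(\alpha_i,\lambda_j)=d_i\delta_{ij}$, whence
\begin{equation*}
  (\eta,\lambda_j)=\sum_{i\in I}n_i(\alpha_i,\lambda_j)=n_jd_j .
\end{equation*}
Thus $\eta$ lies in $\Cent^q(\Lambda_W)$ exactly when $\ell\mid n_jd_j$ for all $j$, which is equivalent to $(\ell/\gcd(\ell,d_j))\mid n_j$ for all $j$. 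This says precisely that $\eta$ is an integral combination of the generators $\frac{\ell}{\gcd(\ell,d_i)}\alpha_i$, i.e. $\eta\in\Lambda_R^{[\ell]}$ by Definition \ref{elllattice}, and both inclusions follow simultaneously.

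This is literally the dual of the calculation behind Lemma \ref{CentR}: there one pairs a weight $\sum m_j\lambda_j$ against a simple root $\alpha_i$ to get $d_im_i$, whereas here one pairs a root $\sum n_i\alpha_i$ against a fundamental weight $\lambda_j$ to get $n_jd_j$, the two producing the same divisibility constraint with roots and weights exchanged. I therefore expect no genuine obstacle; the only point that deserves an explicit word is the reduction to testing on the generating set $\{\lambda_j\}$ of $\Lambda_W$, which is immediate from bilinearity of the Killing form.
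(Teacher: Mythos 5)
Your proof is correct and is exactly the argument the paper intends: the paper gives no separate proof of Lemma \ref{CentW}, stating only that ``the same calculation'' as in Lemma \ref{CentR} applies, and your computation $(\sum_i n_i\alpha_i,\lambda_j)=n_jd_j$ with the resulting divisibility condition $(\ell/\gcd(\ell,d_j))\mid n_j$ is precisely that dual calculation spelled out. Nothing is missing; the reduction to testing against the generators $\lambda_j$ is the only point needing mention, and you handle it.
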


  \subsection{Quantum groups}\label{qgrp}

    For a finite-dimensional complex simple Lie algebra $\g$, lattices
    $\Lambda, \,\Lambda'$ with $\Lambda_R \subset \Lambda \subset \Lambda_W$
    and $2\Lambda_R^{(\ell)}\subset
    \Lambda'\subset\Cent^q(\Lambda_W)\cap\Lambda_R$, and a primitive $\ell$-th
    root of unity $q$, we aim to define the finite-dimensional quantum group
    $u_q(\g,\Lambda,\Lambda')$, also called small quantum group.  We construct
    $u_q(\g,\Lambda,\Lambda')$ by using rational and integral forms of the
    deformed universal enveloping algebra $U_q(\g)$ for an indeterminate
    $q$. In the following we give the definitions of the quantum groups,
    following the lines of \cite{Len14}. The different choices of
    $\Lambda$ are already in \cite{Lus93}, Sec. 2.2. We shall give a
    dictionary to translate Lusztig's notation to the one used here.

    \begin{defi}\label{quantumBinom}
      For $q\in\C^{\times}$ or $q$ an indeterminate and $n\leq k\in\N_0$ we
      define
      \begin{equation*}
        [n]_q \df \frac{q^n-q^{-n}}{q-q^{-1}} \qquad
        [n]_q! \df [1]_1 [2]_q\dots [n]_q \qquad
        \begin{bmatrix} n \\ k \end{bmatrix}_q \df
        \begin{cases} \frac{[n]_q!}{[k]_q! [n-k]_q!}, & 0\leq k\leq n, \\
                      0,                           & \text{else}.
        \end{cases}
      \end{equation*}
    \end{defi}

    \begin{defi}\label{rationalForm}
      Let $q$ be an indeterminate.
      For each abelian group $\Lambda$ with $\Lambda_R\subset \Lambda\subset
      \Lambda_W$ we define the \emph{rational form} $U_q^{\Q(q)}(\g,\Lambda)$
      over the ring of rational functions $\kk=\Q(q)$ as follows:

      As algebra, let $U_q^{\Q(q)}(\g,\Lambda)$ be generated by the group ring
      $\kk[\Lambda]$, spanned by $K_{\Lambda}$, $\lambda\in\Lambda$, and
      additional generators $E_{\alpha_i},\, F_{\alpha_i}$, for each simple
      root $\alpha_i,\, i\in I$, with relations:
      \begin{gather}
        K_{\lambda}E_{\alpha_i}K_{\lambda}^{-1} =
        q^{(\lambda,\alpha_i)}E_{\alpha_i},
        \label{uq2}\\
        K_{\lambda}F_{\alpha_i}K_{\lambda}^{-1} =
        q^{-(\lambda,\alpha_i)}F_{\alpha_i},
        \label{uq3}\\
        E_{\alpha_i}F_{\alpha_j}-F_{\alpha_j}E_{\alpha_i} =\delta_{ij}
        \frac{K_{\alpha_i}-K_{\alpha_i}^{-1}}{q_{\alpha_i} -
        q_{\alpha_i}^{-1}}, \label{uq4}
      \end{gather}
      and Serre relations for any $i\neq j\in I$
      \begin{gather}
        \sum_{r=0}^{1-a_{ij}}(-1)^r \begin{bmatrix} 1-a_{ij}\\r
        \end{bmatrix}_{q_i} E_{\alpha_i}^{1-a_{ij}-r} E_{\alpha_j}
        E_{\alpha_i}^r = 0,\label{qSerreE}\\
        \sum_{r=0}^{1-a_{ij}}(-1)^r \begin{bmatrix} 1-a_{ij}\\r
        \end{bmatrix}_{\bar q_i} F_{\alpha_i}^{1-a_{ij}-r} F_{\alpha_j}
        F_{\alpha_i}^r = 0,\label{qSerreF}
      \end{gather}
      where $\bar q\df q^{-1}$, the quantum binomial coefficients are defined
      in Definition \ref{quantumBinom} and by definition
      $q^{(\alpha_i,\alpha_j)}=(q^{d_i})^{a_{ij}}=q_i^{a_{ij}}$.

      As a coalgebra, let the coproduct $\Delta$, the counit $\varepsilon$ and
      the antipode $S$ be defined on the group-Hopf-algebra $\kk[\Lambda]$ as
      usual
      \begin{equation*}
        \Delta(K_{\lambda}) = K_{\lambda}\otimes K_{\lambda}, \qquad
        \varepsilon(K_{\lambda}) = 1, \qquad
        S(K_{\lambda}) = K_{\lambda}^{-1} = K_{-\lambda},
      \end{equation*}
      and on the generator $E_{\alpha_i}, F_{\alpha_i}$, for each simple
      root $\alpha_i$, $i\in I$ as follows
      \begin{align*}
        \Delta(E_{\alpha_i})=E_{\alpha_i}\otimes K_{\alpha_i} + 1\otimes
        E_{\alpha_i},& 
        \quad \Delta(F_{\alpha_i})= F_{\alpha_i}\otimes 1 + K_{\alpha_i}^{-1}
        \otimes F_{\alpha_i},\\
        \varepsilon(E_{\alpha_i})= 0,& 
        \quad \varepsilon(F_{\alpha_i}) = 0,\\
        S(E_{\alpha_i})= -E_{\alpha_i}K_{\alpha_i}^{-1},&
        \quad S(F_{\alpha_i})=-K_{\alpha_i}F_{\alpha_i}.
      \end{align*}
    \end{defi}

      This is a Hopf algebra over the field $\kk=\Q(q)$.
      Moreover, we have a triangular decomposition: Consider the subalgebras
      $U_q^{\Q(q),+}$, generated by the $E_{\alpha_i}$, and $U_q^{\Q(q),-}$,
      generated by the $F_{\alpha_i}$, and $U_q^{\Q(q),0}=\kk[\Lambda]$,
      spanned by the $K_{\lambda}$. Then the multiplication in
      $U_q^{\Q(q)}=U_q^{\Q(q)}(\g,\Lambda)$ induces an isomorphism of vector
      spaces:
      \begin{equation*}
        U_q^{\Q(q),+} \otimes U_q^{\Q(q),0} \otimes U_q^{\Q(q),-} 
        \overset{\cong}{\longrightarrow}U_q^{\Q(q)}.
      \end{equation*}
    
    \begin{defi}
      The so-called \emph{restricted integral form} $U_q^{\Z[q,q^{-1}]}
      (\g,\Lambda)$ is generated as a $\Z[q,q^{-1}]$-algebra by $\Lambda$ and
      the following elements in $U_q^{\Q(q),\pm}(\g,\Lambda)$, called
      \emph{divided powers}:
      \begin{equation*}
        E_{\alpha}^{(r)} \df \frac{E_{\alpha}^r} 
        {\prod_{s=1}^r
        [s]_{q_\alpha}}
        \quad
        F_{\alpha}^{(r)} \df \frac{F_{\alpha}^r} 
        {\prod_{s=1}^r
        [s]_{\bar q_\alpha}}
        \quad \text{for all }\alpha\in\Phi^{+},r>0,
      \end{equation*}
      and by the following elements in $U_q^{\Q(q)}(\g,\Lambda)^0$:
      \begin{equation*}
        K_{\alpha_i}^{(r)} = 
        \begin{bmatrix} K_{\alpha_i};0 \\ r \end{bmatrix} \df
        \prod_{s=1}^r 
        \frac{K_{\alpha_i}q_{\alpha_i}^{1-s}-K_{\alpha_i}^{-1}q_{\alpha_i}^{s-1}}
             {q_{\alpha_i}^s - q_{\alpha_i}^{-s}},
        \qquad i\in I.
      \end{equation*}
    \end{defi}

    These definitions can also be found in Lusztig's book \cite{Lus93}. In order
    to translate Lusztig's notation to the one used here, one has to match the
    terms in the following way

    \begin{center}
    \begin{tabular}{c|c}
      Lusztig's notation & notation used here \\ \hline
      Index set $I$ & simple roots $\{\alpha_i\mid i\in I\}$ \\
      $X$ & root lattice $\Lambda_R$ \\
      $Y$ & lattice $\Lambda_R\subset \Lambda\subset \Lambda_W$ \\
      $i'\in X$ & ${\alpha_i}$ \\
      $i\in Y$ & $\frac{\alpha_i}{d_{\alpha_i}}=\alpha_i^{\vee}$ coroot \\
      $i\cdot j$, $i,j\in \Z[I]$ & $(\alpha_i,\alpha_j)$ \\
      $\car{i,j'}=2\frac{i\cdot j}{i\cdot i},~ i\in Y, j'\in X$ 
      & $\car{\alpha_i,\alpha_j}$ \\
      $K_i$ & $K_{\alpha_i^{\vee}}$ \\
      $\tilde K_i=K_{\frac{i\cdot i}2 i}$ & $K_{\alpha_i}$
    \end{tabular} 
    \end{center}

    We now define the \emph{restricted specialization} $U_q(\g,\Lambda)$.
    Here, we specialize $q$ to a specific choice $q\in\C^{\times}$.

    \begin{defi}\label{Uqg}
      The infinite-dimensional Hopf algebra $U_q(\g,\Lambda)$ is defined by
      \begin{equation*}
        U_q(\g,\Lambda)\df U_q^{\Z[q,q^{-1}]}(\g,\Lambda)
        \otimes_{\Z[q,q^{-1}]} \C_q,
      \end{equation*}
      where $\C_q=\C$ with the $\Z[q,q^{-1}]$-module structure defined by the
      specific value $q\in\C^{\times}$.
    \end{defi}

    From now on, $q$ will be a primitive $\ell$-th root of unity. We choose
    explicitly $q=\exp(\frac{2\pi i}{\ell})$, see Convention \ref{conv}.

    \begin{defi}\label{uqg}
      Let $\g$ be a finite-dimensional complex simple Lie algebra with root
      system $\Phi$ and assume ${\rm ord}(q^2)>d_\alpha$ for all
      $\alpha\in\Phi$. For lattices $\Lambda,\Lambda'$ with $\Lambda_R\subset
      \Lambda\subset\Lambda_W$ and $2\Lambda_R^{(\ell)}\subset \Lambda'\subset
      \Cent^q(\Lambda_W)\cap\Lambda_R$, we define the \emph{small quantum group}
      $u_q(\g,\Lambda,\Lambda')$ as the algebra $U_q(\g,\Lambda)$ from
      Definition \ref{Uqg}, generated by $K_{\lambda}$ for $\lambda\in\Lambda$
      and $E_{\alpha},F_{\alpha}$ with $\ell_{\alpha}>1$, $\alpha\in\Phi^+$ not
      necessarily simple, together with the relations
      \begin{equation*}
        E_{\alpha}^{\ell_{\alpha}}=0, \quad
        F_{\alpha}^{\ell_{\alpha}}=0 \quad\text{and}\quad
        K_{\lambda} = 1 ~ \text{for } \lambda\in\Lambda'.
      \end{equation*}
      The coalgebra structure is again given as in Definition
      \ref{rationalForm}.
      This is a finite dimensional Hopf algebra of dimension 
      \begin{equation*}
        |\Lambda/\Lambda'|\prod_{\alpha\in\Phi^+,~ \ell_{\alpha}>1}
        \ell_{\alpha}^2.
      \end{equation*}
    \end{defi}

    The fact, that this gives a Hopf algebra for
    $\Lambda'=2\Lambda_R^{(\ell)}$ is in Lusztig, \cite{Lus90}, Sec. 8. 

    We fix the assumption on $\Lambda'$.

    \begin{ass}\label{ass:Lambda'}
      We assume for the sublattice $\Lambda'\subset\Lambda_W$ in the following that
      \begin{equation*}
         2\Lambda_R^{(\ell)}\subset \Lambda'\subset
         \Cent^q(\Lambda_W)\cap\Lambda_R
      \end{equation*}
    \end{ass}
  \subsection{$R$-matrices}
    \begin{defi}
      A Hopf algebra $H$ is called \emph{quasitriangular} if there exists an
      invertible element $R\in H\otimes H$ such that
      \begin{align}
        \Delta^{op}(h)          &= R\Delta(h) R^{-1}, \label{rmatrix}\\
        (\Delta \otimes \Id)(R) &= R_{13}R_{23}, \label{co1}\\
        (\Id \otimes \Delta)(R) &= R_{13}R_{12}, \label{co2}
      \end{align}
      with $\Delta^{op}(h)=\tau\circ \Delta(h)$, where $\tau: H\otimes
      H\longrightarrow H\otimes H, ~a\otimes b \longmapsto b\otimes a$ and
      $R_{12}=R\otimes 1,~ R_{23} = 1 \otimes R,~ R_{13} = (\tau \otimes
      \Id)(R_{23})=(\Id\otimes \tau)(R_{12}) \in H^{\otimes 3}$. Such an
      element is called an \emph{$R$-matrix of $H$}.
    \end{defi}

\section{Ansatz for $R$}\label{ansatz}
  \subsection{Quasi-$R$-matrix and Cartan-part}
  The goal of this paper is to construct new families of $R$-matrices for small
  quantum groups and certain extensions (see Def. \ref{uqg}). Our starting point
  is Lusztig's ansatz in \cite{Lus93}, Sec. 32.1, for a universal $R$-matrix
  of $U_q(\g,\Lambda)$. This ansatz has been translated by M\"uller in his
  Dissertation \cite{Mue98a}, resp. in \cite{Mue98b}, for small quantum
  groups, which we will use in the following. Note, that this ansatz has been
  successfully generalized to general diagonal Nichols algebras in \cite{AY13}.

  For a finite-dimensional, semisimple complex Lie algebra $\g$, an $\ell$-th
  root of unity $q$ and lattices $\Lambda,\Lambda'$ as in Section \ref{qgrp},
  we write $u=u_q(\g,\Lambda,\Lambda')$.  
  Let ${~}\bar{~}\colon u\to \bar u$ be the $\Q$-algebra isomorphism defined
  by $q\mapsto q^{-1},~E_{\alpha_i}\mapsto E_{\alpha_i},~ F_{\alpha_i}\mapsto
  F_{\alpha_i}$, $i\in I$, and $K_{\lambda}\mapsto K_{-\lambda}$,
  $\lambda\in\Lambda$. Then the map ${~}\bar{}\,\otimes\,\bar{}\,\colon
  u\otimes u\rightarrow \bar u\otimes \bar u$ is a well-defined $\Q$-algebra
  isomorphism and we can define a $\Q(q)$-algebra morphism $\bar\Delta\colon
  u\rightarrow u\otimes u$ given by $\bar\Delta (x) = \overline{\Delta(\bar
  x)}$ for all $x\in U$. We have in general $\bar\Delta\neq\Delta$.

  Assume in the following, that
  \begin{equation}\label{eq:ell-Cond}
    \ell_i> 1 \text{ for all } i\in I,\text{ and }
    \ell_i> -\car{\alpha_i,\alpha_j} \text{ for all }i,j\text{ with }i\neq j.
  \end{equation}

    \begin{thm}[\cite{Len14}]\label{rem:AltRootSys}
      For a root system $\Phi$ of a finite-dimensional simple complex Lie
      algebra and an $\ell$-th root of unity $q$, the condition
      \eqref{eq:ell-Cond} fails only in the following cases $(\Phi,\ell)$. In
      each case, the small quantum group $u_q(\g)$ is described by a different
      $\tilde\Phi$ fulfilling \eqref{eq:ell-Cond}, hence the present work also
      provides results for these cases by consulting the results for
      $\tilde\Phi$.

      \begin{center}
        \begin{tabular}{c||c|c|c|c|c|c}
          $\Phi$ & (all) & $B_n$ & $C_n$ & $F_4$ & $G_2$ & $G_2$ \\ \hline
          $\ell$ & $1,2$ & $4$   & $4$   & $4$   & $3,6$ & $4$   \\
          \hline\hline 
          &&&&&&\\[-7pt]
          $\tilde\Phi$ & (empty) &
          $\underset{n\text{-times}}{\underbrace{A_1\times\ldots\times A_1}}$ &
          $D_{n}$ & $D_4$ & $A_2$ & $A_3$
        \end{tabular}
      \end{center}
    \end{thm}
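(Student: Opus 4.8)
The plan is to split the statement into two independent parts: an \emph{arithmetic classification} determining exactly which pairs $(\Phi,\ell)$ violate \eqref{eq:ell-Cond}, and a \emph{structural identification} producing the alternative root system $\tilde\Phi$ in each failing case. For the first part I would argue directly from $\ell_\alpha=\ell/\gcd(\ell,2d_\alpha)$ with $d_\alpha\in\{1,2,3\}$, together with the fact that the off-diagonal Cartan entries satisfy $-\car{\alpha_i,\alpha_j}\in\{0,1\}$ for simply-laced $\g$, $-\car{\alpha_i,\alpha_j}\le 2$ for $B_n,C_n,F_4$, and $-\car{\alpha_i,\alpha_j}\le 3$ for $G_2$.

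Writing the two clauses of \eqref{eq:ell-Cond} as $\ell_i>1$ and $\ell_i>-\car{\alpha_i,\alpha_j}$, one checks the following. For simply-laced $\g$ and $\ell>2$ one always has $\ell_i=\ell/\gcd(\ell,2)\ge 2$, so both clauses hold and no failure occurs. For $B_n,C_n,F_4$ the binding constraints are $\ell_i>1$ for a long simple root ($d_i=2$, so $\ell_i=1\Leftrightarrow\ell\mid 4$) and $\ell_i>2$ for the short simple root meeting the double bond ($d_i=1$, $-\car{\alpha_i,\alpha_j}=2$, so $\ell_i\le 2\Leftrightarrow\ell=4$); both single out $\ell=4$, and no other off-diagonal entry produces a further value. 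For $G_2$ the long root ($d_i=3$) forces $\ell_i=1\Leftrightarrow\ell\mid 6$, while the short root with $-\car{\alpha_i,\alpha_j}=3$ forces $\ell_i\le 3\Leftrightarrow\ell\in\{3,4,6\}$; the union is $\ell\in\{3,4,6\}$. This reproduces exactly the six exceptional pairs, the entries $\ell=1,2$ being excluded by Convention \ref{conv}.

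For the identification in the second part I would observe that in every case except $(G_2,4)$ it is precisely the \emph{long} roots that acquire $\ell_\alpha=1$: at $\ell=4$ one has $\ell_\alpha=4/\gcd(4,4)=1$ for $d_\alpha=2$, and for $G_2$ at $\ell\in\{3,6\}$ one has $\ell_\alpha=\ell/\gcd(\ell,6)=1$ for $d_\alpha=3$, whereas the short roots keep $\ell_\alpha=\ell/\gcd(\ell,2)>1$. By Definition \ref{uqg} the generators $E_\alpha,F_\alpha$ with $\ell_\alpha=1$ are simply omitted, so $u_q(\g)$ is generated by the torus together with the short-root vectors. Since the short roots of an irreducible root system form a (possibly reducible) root system of the same normalization, and the braiding $q^{(\alpha,\beta)}$ restricted to them is inherited unchanged, this subalgebra is again a small quantum group $u_q(\tilde\g)$; the standard descriptions of short-root subsystems give $\tilde\Phi=A_1^{\times n}$ (for $B_n$, short roots $\{\pm e_i\}$ mutually orthogonal), $\tilde\Phi=D_n$ (for $C_n$, short roots $\{\pm e_i\pm e_j\}$), $\tilde\Phi=D_4$ (the $24$ short roots of $F_4$), and $\tilde\Phi=A_2$ (the $6$ short roots of $G_2$). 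Each such $\tilde\Phi$ is simply-laced, hence satisfies \eqref{eq:ell-Cond} by the first part, which is exactly what is needed to transport the results.

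The main obstacle is the remaining case $(G_2,4)$, where $\ell_\alpha=2$ for both short ($4/\gcd(4,2)$) and long ($4/\gcd(4,6)$) roots, so that no root vector is dropped and the short-root reduction is unavailable. Here all six positive root vectors survive with nilpotency degree $2$, so the positive part has dimension $2^{6}$, matching exactly that of $u_q(A_3)$ at $\ell=4$ (six positive roots, each with $\ell_\alpha=2$). The identification $u_q(G_2)\cong u_q(A_3)$ is therefore an \emph{exceptional} isomorphism of the full algebras rather than a restriction to a subsystem; it cannot be read off from the Cartan datum directly, since the rank-$2$ diagonal braiding of $G_2$ at $q=i$ naively degenerates to Cartan type $A_2$ (the entry $-3$ being only determined modulo the order of $q^2=-1$). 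Making this coincidence precise is the delicate point, and I would carry it out as in \cite{Len14} via the classification of finite-dimensional Nichols algebras of diagonal type, matching the braiding matrices and PBW data of the two specializations.
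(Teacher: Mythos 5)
The paper does not actually prove this statement: Theorem \ref{rem:AltRootSys} is imported wholesale from \cite{Len14} by citation, so there is no internal proof to compare your argument against; the comparison can only be with that reference. On its own terms, your arithmetic classification is correct and complete: the case analysis on $d_\alpha\in\{1,2,3\}$ and on the off-diagonal Cartan entries does reproduce exactly the pairs $(\Phi,\ell)$ in the table (simply-laced never fails for $\ell>2$; the double bond and the long roots both single out $\ell=4$ for $B_n,C_n,F_4$; the long roots of $G_2$ give $\ell\mid 6$ and the triple bond adds $\ell=4$). You also correctly identify the mechanism behind the table ($\ell_\alpha=1$ kills precisely the long-root generators, and the short roots of $B_n,C_n,F_4,G_2$ form $A_1^{\times n},D_n,D_4,A_2$) and correctly isolate $(G_2,\ell=4)$ as the case where no generator is dropped and an exceptional identification is needed.

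The one genuine gap is that you defer to \cite{Len14} only for $(G_2,4)$, while treating the remaining cases as immediate: your claim that ``the braiding restricted to the short roots is inherited unchanged, so this subalgebra is again a small quantum group $u_q(\tilde\g)$'' is not automatic. Most simple roots of $\tilde\Phi$ are \emph{non-simple} roots of $\Phi$ (for $B_n$ only $e_n$ is simple among $e_1,\dots,e_n$), so their root vectors are defined via Lusztig's braid-group operators, and one must verify that they satisfy the defining relations of $u_q(\tilde\Phi)$ and that the PBW data match. For instance, in $B_2$ at $q=i$ the Levendorskii--Soibelman commutator of $E_{e_1}$ and $E_{e_2}$ is a priori a multiple of the long-root vector $E_{e_1+e_2}$; one needs the coefficient (essentially $[2]_q$) to vanish at $q=i$ for the claimed $A_1\times A_1$ relations to hold. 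This is exactly the kind of computation carried out in \cite{Len14}, so the honest statement is that \emph{all} of the structural identifications, not just the exceptional $A_3$ one, rest on that reference --- which is also all the paper itself does. With that deferral made uniform, your reconstruction is a faithful and somewhat more informative account than the paper's bare citation.
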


    The following theorem is essentially in \cite{Lus93}. Note that the roles
    of $E,F$ will be switched in our article to match the usual convention: 
  
  \begin{thm}[cf. \cite{Mue98b}, Thm. 8.2]\label{Thm:Theta}
    (a) There is a unique family of elements $\Theta_\nu\in u_{\nu}^+\otimes
    u_{\nu}^-$, $\nu\in\Lambda_R$, such that $\Theta_0=1\otimes 1$ and
    $\Theta=\sum_{\nu}\Theta_{\nu}\in u\otimes u$ satisfies
    $\Delta(x)\Theta = \Theta \bar\Delta(x)$ for all $x\in u$.

    (b) Let $B$ be a vector space-basis of 
    $u^+$, such that $B_{\nu}=B\cap u^+_{\nu}$ is a basis of
    $u^+_{\nu}$ for all $\nu$. Here, $u_{\nu}^+$ refers to the natural
    $\Lambda_R$-grading of $u^+$. Let $\{b^* ~|~ b\in B_{\nu}\}$ be the
    basis of $u_{\nu}^-$ dual to $B_{\nu}$ under the non-degenerate bilinear
    form $(\,\cdot\,,\,\cdot\,)\colon u^+\otimes u^-\to \C$.
    We have
    \begin{equation}
      \Theta_{\nu} \eq (-1)^{{\rm tr}\, \nu} q_{\nu} \sum_{b\in B_{\nu}} b^+
      \otimes b^{*-} \in u_{\nu}^+ \otimes u_{\nu}^-,
    \end{equation}
    where $q_{\nu}=\prod_i q_i^{\nu_i}$, ${\rm tr}\,\nu=\sum_i\nu_i$ for
    $\nu=\sum_i\nu_i \alpha_i\in\Lambda_R$.
  \end{thm}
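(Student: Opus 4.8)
The plan is to reduce the defining identity to the algebra generators of $u$, extract a recursion on the graded pieces $\Theta_\nu$, deduce uniqueness directly from that recursion, and obtain existence together with part (b) by feeding the explicit formula back into the recursion.

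First I would note that the condition $\Delta(x)\Theta=\Theta\bar\Delta(x)$ is multiplicative in $x$: since $\Delta$ and $\bar\Delta$ are algebra maps, if it holds for $x$ and $y$ then $\Delta(xy)\Theta=\Delta(x)\Theta\bar\Delta(y)=\Theta\bar\Delta(xy)$. Hence it suffices to check it on the generators $K_\lambda,E_{\alpha_i},F_{\alpha_i}$. For $K_\lambda$ one has $\bar\Delta(K_\lambda)=\overline{\Delta(K_{-\lambda})}=K_\lambda\otimes K_\lambda=\Delta(K_\lambda)$, so the condition reads $[\Delta(K_\lambda),\Theta]=0$; this holds automatically from the grading, because each $\Theta_\nu\in u_\nu^+\otimes u_\nu^-$ satisfies $(K_\lambda\otimes K_\lambda)\Theta_\nu(K_{-\lambda}\otimes K_{-\lambda})=q^{(\lambda,\nu)}q^{-(\lambda,\nu)}\Theta_\nu=\Theta_\nu$. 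Thus only the $E_{\alpha_i}$ (and symmetrically $F_{\alpha_i}$) equations carry content.

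Using $\Delta(E_{\alpha_i})=E_{\alpha_i}\otimes K_{\alpha_i}+1\otimes E_{\alpha_i}$ and $\bar\Delta(E_{\alpha_i})=E_{\alpha_i}\otimes K_{-\alpha_i}+1\otimes E_{\alpha_i}$, I would compare the components of $\Delta(E_{\alpha_i})\Theta=\Theta\bar\Delta(E_{\alpha_i})$ of first-factor degree $\nu$, which yields
\[
(1\otimes E_{\alpha_i})\Theta_\nu-\Theta_\nu(1\otimes E_{\alpha_i})=\Theta_{\nu-\alpha_i}(E_{\alpha_i}\otimes K_{-\alpha_i})-(E_{\alpha_i}\otimes K_{\alpha_i})\Theta_{\nu-\alpha_i},
\]
together with the mirror identity for $F_{\alpha_i}$. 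The right-hand side involves only $\Theta_{\nu-\alpha_i}$, of strictly smaller height. Writing $\Theta_\nu=\sum_k a_k\otimes b_k$, the left-hand side is $\sum_k a_k\otimes[E_{\alpha_i},b_k]$, so each graded piece is determined by induction on $\mathrm{tr}\,\nu$ from $\Theta_0=1\otimes1$, provided the map $\Theta_\nu\mapsto(\sum_k a_k\otimes[E_{\alpha_i},b_k])_{i\in I}$ is injective on $u_\nu^+\otimes u_\nu^-$. This injectivity reduces to the standard fact that the skew-derivations $[E_{\alpha_i},-]$ jointly separate points of $u^-_\nu$ for $\nu\neq0$, a consequence of the non-degeneracy of the pairing $u^+\otimes u^-\to\C$. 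This gives uniqueness at once and reduces existence to exhibiting one consistent solution.

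For existence and for part (b) simultaneously I would take the candidate $\Theta_\nu=(-1)^{\mathrm{tr}\,\nu}q_\nu\sum_{b\in B_\nu}b^+\otimes b^{*-}$ and verify it satisfies the recursion. The essential input is the compatibility of the form with the coproduct: the product of two homogeneous elements of $u^+$, paired through $(\,\cdot\,,\,\cdot\,)$, is adjoint to Lusztig's skew-derivations on $u^-$, so that multiplication by $E_{\alpha_i}$ on one tensor factor corresponds, under $b\mapsto b^*$, to the derivation appearing on the opposite factor. The hardest part will be precisely this verification: establishing the adjointness between $E_{\alpha_i}$-multiplication and the dual skew-derivations and then tracking the resulting signs and $q$-powers — the factor $(-1)^{\mathrm{tr}\,\nu}q_\nu$ and the switched roles of $E,F$ — so that both sides of the recursion agree. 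A second point requiring care, specific to the small quantum group rather than the generic algebra, is well-definedness after specialization: one must check that $\Theta=\sum_\nu\Theta_\nu$ actually lies in $u\otimes u$ and that the sum is finite, which holds because $E_\alpha^{\ell_\alpha}=0$ forces $\Theta_\nu=0$ for $\nu$ large, so only finitely many graded pieces survive the passage to $u_q(\g,\Lambda,\Lambda')$.
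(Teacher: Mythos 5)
The paper itself does not prove this theorem: it imports it (with the roles of $E$ and $F$ switched) from M\"uller's Theorem 8.2, which in turn adapts Lusztig's Theorem 4.1.2 to the finite-dimensional setting. Your outline reconstructs exactly that standard proof: reduction of $\Delta(x)\Theta=\Theta\bar\Delta(x)$ to generators, triviality of the $K_\lambda$-condition by the grading, extraction of the height recursion from the first-factor degree-$\nu$ components of the $E_{\alpha_i}$-equation, uniqueness via the fact that an element of $u^-_\nu$, $\nu\neq0$, killed by all commutators $[E_{\alpha_i},-]$ (equivalently, after splitting off the $K_{\pm\alpha_i}$-parts through the triangular decomposition, killed by both families of skew-derivations) must vanish, and existence by checking the explicit dual-basis formula against the recursion. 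Your degree bookkeeping in the recursion is correct, and the uniqueness half of the argument is essentially complete.

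The substantive gap is that the heart of part (b) is named but never performed. Verifying that $\Theta_\nu=(-1)^{\mathrm{tr}\,\nu}q_\nu\sum_{b\in B_\nu} b^+\otimes b^{*-}$ satisfies the recursion requires the precise adjointness formulas relating left and right multiplication by $E_{\alpha_i}$ on $u^+$ to the two skew-derivations on $u^-$ under the pairing (Lusztig's 1.2.13--1.2.14), and it is exactly this computation that forces the normalization $(-1)^{\mathrm{tr}\,\nu}q_\nu$; without carrying it out you have an ansatz whose consistency is asserted, not proved. Moreover, your argument silently uses two facts that are genuine content in the finite-dimensional quotient and constitute the actual point of M\"uller's theorem: that the bilinear form $u^+\otimes u^-\to\C$ remains non-degenerate after specialization at the root of unity and passage to the quotient by $E_\alpha^{\ell_\alpha}$, $F_\alpha^{\ell_\alpha}$, $K_{\lambda'}=1$, and that $u^+_\nu=\sum_i E_{\alpha_i}u^+_{\nu-\alpha_i}$ for $\nu\neq0$ (needed for your separation fact). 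Your finiteness remark (only finitely many $\Theta_\nu$ survive because $u^+$ is finite dimensional) is correct but is the easy part of the descent; the two points above are where passing from $U_q(\g,\Lambda)$ to $u_q(\g,\Lambda,\Lambda')$ requires real work.
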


  \begin{rem}
    \begin{enumerate}[(i)]
      \item The element $\Theta$ is called the \emph{Quasi-$R$-matrix} of
        $u=u_q(\g,\Lambda,\Lambda')$.
      \item Since the element $\Theta$ is unique, the expressions $\sum_{b\in
        B_{\nu}} b^+ \otimes b^{*-}$ in part (b) of the theorem are
        independent of the actual choice of the basis $B$.
      \item For example, if $\g=A_1$, i.e. there is only one simple root
        $\alpha=\alpha_1$, and $E=E_{\alpha}$, $F=F_{\alpha}$. Thus we have
        \begin{equation*}
          \Theta \eq \sum_{n=0}^{\ell_{\alpha}-1} (-1)^n
          \frac{(q-q^{-1})^n}{[n]_q!} q^{-n(n-1)/2} E^n \otimes F^n.
        \end{equation*}
      \item  The Quasi-$R$-matrix $\Theta$ is invertible with inverse
        $\Theta^{-1} = \bar \Theta$, i.e. the expression one gets by changing
        all $q$ to $\bar q = q^{-1}$. 
    \end{enumerate}
  \end{rem}

  \begin{thm}[cf. \cite{Mue98b}, Theorem 8.11]\label{Thm:R0}
    Let $\Lambda'\subset\{\mu\in\Lambda~|~K_{\mu} \text{ central in }
    u_q(\g,\Lambda)\}$ be a subgroup of $\Lambda$, and $H_1, H_2$ be subgroups of
    $\Lambda/\Lambda'$, containing $\Lambda_R/\Lambda'$. In the following,
    $\mu,\mu_1,\mu_2\in H_1$ and $\nu,\nu_1,\nu_2\in H_2$.

    The element $R=R_0\bar\Theta$ with $R_0= \sum_{\mu,\nu} f(\mu,\nu)
    K_{\mu} \otimes K_{\nu}$ is an $R$-matrix for
    $u_q(\g,\Lambda,\Lambda')$, if and only if for all $\alpha\in
    \Lambda_R$ and $\mu,\nu$ the following holds:
    \begin{align}
      f(\mu+ \alpha, \nu) = q^{-(\nu, \alpha)} f(\mu,\nu),
      &\quad
      f(\mu, \nu+ \alpha) = q^{-(\mu, \alpha)} f(\mu,\nu),
      \label{f01}
      \\
      \sum_{\substack{\nu_1,\nu_2\in H_2\\\nu_1+\nu_2 = \nu}}
      f(\mu_1,\nu_1)f(\mu_2,\nu_2) = \delta_{\mu_1,\mu_2} f(\mu_1,\nu),
      &\quad
      \sum_{\substack{\mu_1,\mu_2\in H_1\\\mu_1+\mu_2 = \mu}}
      f(\mu_1,\nu_1)f(\mu_2,\nu_2) = \delta_{\nu_1,\nu_2} f(\mu,\nu_1), 
      \label{f02}
      \\
      \sum_{\mu} f(\mu,\nu) = \delta_{\nu,0},
      &\quad
      \sum_{\nu} f(\mu,\nu) = \delta_{\mu,0}.
      \label{f03}
    \end{align}
    Condition \ref{f03} follows from \ref{f01} and \ref{f02} if there exists
    $c\in\C$ such that $f(\mu,0)=f(0,\nu)=c$ for all $\mu,\nu$. There are
    conditions on the order of $q$: For all $\mu,\nu$ for which there exist
    $\tilde\mu,\tilde\nu$ such that $f(\mu,\tilde\nu)\neq 0$,
    $f(\tilde\mu,\nu)\neq0$ we have
    \begin{equation*}
      q^{2l_i\car{\mu,\alpha_i}}=q^{2l_i\car{\nu,\alpha_i}}=1.
    \end{equation*}
    If this condition is satisfied then $f$ is well-defined on the preimages
    of $H_1\times H_2$ under $\Lambda\to\Lambda/\Lambda'$.
    (In particular, this is the case under our assumption
    $\Lambda'\subset\Cent^q(\Lambda_W)$.)
  \end{thm}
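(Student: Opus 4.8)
The plan is to verify the three defining identities \eqref{rmatrix}, \eqref{co1}, \eqref{co2} of a quasitriangular structure directly for $R=R_0\bar\Theta$, translating each into conditions on the scalars $f(\mu,\nu)$: I expect \eqref{f01} to come from \eqref{rmatrix}, the convolutions \eqref{f02} from \eqref{co1}--\eqref{co2}, and the normalisations \eqref{f03} from the counit identities $(\eps\otimes\Id)R=(\Id\otimes\eps)R=1$. For the first identity I would first strip off the quasi-$R$-matrix. Since $\Theta$ is invertible with $\Theta^{-1}=\bar\Theta$ and satisfies $\Delta(x)\Theta=\Theta\bar\Delta(x)$ by Theorem \ref{Thm:Theta}, conjugation gives $\bar\Theta\Delta(x)=\bar\Delta(x)\bar\Theta$; thus \eqref{rmatrix}, written as $R\Delta(h)=\Delta^{op}(h)R$, becomes $R_0\bar\Delta(h)=\Delta^{op}(h)R_0$ after cancelling the invertible $\bar\Theta$ on the right. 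This condition is multiplicative in $h$, so it suffices to test it on generators.

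For $h=K_\lambda$ both sides agree trivially, since $\Delta(K_\lambda)=\bar\Delta(K_\lambda)=\Delta^{op}(K_\lambda)=K_\lambda\otimes K_\lambda$ and $R_0$ commutes with $K_\lambda\otimes K_\lambda$. For $h=E_{\alpha_i}$ I would expand $\bar\Delta(E_{\alpha_i})=E_{\alpha_i}\otimes K_{\alpha_i}^{-1}+1\otimes E_{\alpha_i}$ and $\Delta^{op}(E_{\alpha_i})=K_{\alpha_i}\otimes E_{\alpha_i}+E_{\alpha_i}\otimes 1$, push the factors $K_\mu\otimes K_\nu$ of $R_0$ past the $E$'s using \eqref{uq2}, and compare the coefficients of the two independent monomial types $E_{\alpha_i}K_\mu\otimes K_\nu$ and $K_\mu\otimes E_{\alpha_i}K_\nu$. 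After reindexing $\nu\mapsto\nu+\alpha_i$, resp.\ $\mu\mapsto\mu+\alpha_i$, these comparisons yield exactly the two equations \eqref{f01}. The generator $F_{\alpha_i}$ produces the same pair by the analogous computation using \eqref{uq3}, so \eqref{rmatrix} is equivalent to \eqref{f01}.

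For the coproduct identities I would substitute $R=R_0\bar\Theta$ into \eqref{co1}--\eqref{co2}, expand $(\Delta\otimes\Id)R_0=\sum f(\mu,\nu)\,K_\mu\otimes K_\mu\otimes K_\nu$ and likewise for $(\Id\otimes\Delta)R_0$, and invoke the comultiplication behaviour of the quasi-$R$-matrix (as in \cite{Lus93,Mue98b}), which factors $(\Delta\otimes\Id)\bar\Theta$ and $(\Id\otimes\Delta)\bar\Theta$ into the pieces $\bar\Theta_{ij}$ up to explicit $K$-factors. Normalising all $K$-factors into standard position (the phases produced being exactly those already governed by \eqref{f01}) and matching the coefficient of each PBW monomial, the scalar identities obtained are the two convolution relations \eqref{f02}, one from \eqref{co1} and one from \eqref{co2}. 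Applying the counit to one tensor leg collapses $\bar\Theta$ to its degree-$0$ term $1\otimes1$, so $(\eps\otimes\Id)R=1$ and $(\Id\otimes\eps)R=1$ turn into \eqref{f03}. Conversely, assuming \eqref{f01}--\eqref{f03} I would reverse these steps to recover all three axioms; invertibility of $R$ is then automatic, since $(\eps\otimes\Id)R=1$ together with \eqref{co1}--\eqref{co2} forces $R^{-1}=(S\otimes\Id)R$.

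Two points require care and constitute the main obstacle. First, in the small quantum group the truncations $E_\alpha^{\ell_\alpha}=F_\alpha^{\ell_\alpha}=0$ mean the coproduct factorisation of $\bar\Theta$ only closes up modulo these relations; the terms that would otherwise spoil the matching vanish precisely when the relevant $q$-powers are trivial, which is the source of the order condition $q^{2\ell_i\car{\mu,\alpha_i}}=q^{2\ell_i\car{\nu,\alpha_i}}=1$ on the support of $f$. Second, \eqref{f01} determines $f$ on cosets by shifting with $\Lambda_R$, so one must check that this descends consistently to $H_1\times H_2\subset(\Lambda/\Lambda')^2$; this is guaranteed under Assumption \ref{ass:Lambda'}, since $\Lambda'\subset\Cent^q(\Lambda_W)$ makes the shift-phases $q^{-(\mu,\alpha)}$ trivial on $\Lambda'$. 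Finally, the redundancy clause --- that \eqref{f03} follows from \eqref{f01} and \eqref{f02} once $f(\mu,0)=f(0,\nu)=c$ is constant --- I would obtain by summing \eqref{f02} over the free index and inserting this boundary value.
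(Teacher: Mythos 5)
You should first be aware that the paper itself contains no proof of Theorem~\ref{Thm:R0}: it is imported, with notation translated, from M\"uller (cf.\ \cite{Mue98b}, Theorem~8.11), which in turn adapts Lusztig's ansatz (\cite{Lus93}, Sec.~32.1) to the finite-dimensional quotients. So your proposal can only be measured against that standard argument, and in outline it does reproduce it faithfully: cancelling the invertible $\bar\Theta$ via $\Delta(x)\Theta=\Theta\bar\Delta(x)$ to reduce \eqref{rmatrix} to $\Delta^{op}(h)R_0=R_0\bar\Delta(h)$, checking this on the generators $K_\lambda,E_{\alpha_i},F_{\alpha_i}$ to obtain \eqref{f01}; using the ($K$-twisted) coproduct identities of the quasi-$R$-matrix together with \eqref{f01} to commute $(R_0)_{23}$ past $\bar\Theta_{13}$ and reduce \eqref{co1}--\eqref{co2} to the convolution identities \eqref{f02}; obtaining \eqref{f03} from $(\eps\otimes\Id)R=(\Id\otimes\eps)R=1$; and getting invertibility from the standard $(S\otimes\Id)R$ argument, which is valid in finite dimensions. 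Your derivation of the redundancy clause by summing \eqref{f02} over the free index is also the right mechanism, with the caveat that it requires $c\neq 0$ (for $c=0$ the function $f\equiv 0$ satisfies \eqref{f01}--\eqref{f02} but not \eqref{f03}).

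The one genuine flaw is your explanation of the order conditions $q^{2\ell_i\car{\mu,\alpha_i}}=q^{2\ell_i\car{\nu,\alpha_i}}=1$. You attribute them to the truncations $E_\alpha^{\ell_\alpha}=F_\alpha^{\ell_\alpha}=0$ spoiling the coproduct factorisation of $\bar\Theta$, but the truncation causes no such problem: in the product of the twisted $\bar\Theta_{13}$ with $\bar\Theta_{23}$ every overflow term of total degree $\geq\ell_\alpha$ carries a factor $F_\alpha^{n_1+n_2}=0$, and the coefficient identities for the surviving components specialize without poles, so the truncated quasi-$R$-matrix satisfies its coproduct identities \emph{exactly} in $u_q(\g,\Lambda,\Lambda')$. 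The actual source of the order conditions is the descent issue that you mention only as your second, separate remark: $f$ is by hypothesis a function on $\Lambda/\Lambda'$, while \eqref{f01} prescribes its transformation under shifts by \emph{all} of $\Lambda_R$; applying \eqref{f01} to $\alpha\in\Lambda_R$ with $\alpha\equiv 0 \bmod \Lambda'$ forces $q^{(\nu,\alpha)}=1$ on the support of $f$. Since every admissible $\Lambda'$ contains Lusztig's kernel, whose generators in the coroot normalisation of \cite{Lus93,Mue98b} are $2\ell_i\alpha_i^{\vee}$, and $(\mu,2\ell_i\alpha_i^{\vee})=2\ell_i\car{\mu,\alpha_i}$, this consistency requirement is precisely the displayed condition --- which is exactly what the theorem's closing sentence (``then $f$ is well-defined on the preimages of $H_1\times H_2$'') is saying. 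You noticed this requirement only in the special case $\Lambda'\subset\Cent^q(\Lambda_W)$, where it trivializes, and therefore missed that it, and not the truncation of $\bar\Theta$, is what produces the order conditions in the general statement; as proposed, that clause of the theorem would not be recovered by your argument.
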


  \subsection{A set of equations}
  \begin{lem}\label{g-eq}
    Let $\Lambda\subset\Lambda_W$ a sublattice and $\Lambda'\subset\Lambda$.
    Assume in addition, $\Lambda'\subset \Cent^q(\Lambda_W)$.
    \begin{enumerate}[(i)]
      \item Let $f:\Lambda/\Lambda'\times\Lambda/\Lambda' \to\C$, satisfying
        condition \eqref{f01} of Theorem \ref{Thm:R0}. Then 
        \begin{equation}
          g(\bar\mu,\bar\nu) \df |\Lambda_R/\Lambda'| q^{(\mu,\nu)}f(\mu,\nu),
        \end{equation}
        defines a function $\pi_1\times\pi_1\to \C$.
      \item If, in addition, $f$ satisfies conditions\eqref{f02}-\eqref{f03},
        the function $g$ in (i) satisfies the following equations:
        \begin{align}
          \begin{split}
            \sum_{\bar\nu_1+\bar\nu_2 = \bar\nu} \delta_{(\mu_2-\mu_1\in
            \Cent^q(\Lambda_R))}
            q^{(\mu_2-\mu_1,\bar\nu_1)} g(\bar\mu_1,\bar\nu_1)
            g(\bar\mu_2,\bar\nu_2) &= \delta_{\mu_1,\mu_2}
            g(\bar\mu_1,\bar\nu),
            \\
            \sum_{\bar\mu_1+\bar\mu_2 = \bar\mu} \delta_{(\nu_2-\nu_1\in
            \Cent^q(\Lambda_R))}
            q^{(\nu_2-\nu_1,\bar\mu_1)} g(\bar\mu_1,\bar\nu_1)
            g(\bar\mu_2,\bar\nu_2) &= \delta_{\nu_1,\nu_2}
            g(\bar\mu,\bar\nu_1), 
          \end{split}
          \label{g02}
        \end{align}
        \vspace{-0.4cm}
        \begin{align}
          \begin{split}
            \sum_{\bar\nu} \delta_{(\mu\in \Cent^q(\Lambda_R))} q^{-(\mu,\bar\nu)}
            g(\bar\mu,\bar\nu) &= \delta_{\mu,0},
            \\
            \sum_{\bar\mu} \delta_{(\nu\in \Cent^q(\Lambda_R))} q^{-(\nu,\bar\mu)}
            g(\bar\mu,\bar\nu) &= \delta_{\nu,0}.
          \end{split}
          \label{g03}
        \end{align}
        Here, the sums range over $\pi_1$ and expressions like $\delta_{(\mu\in
        \Cent^q(\Lambda_R))}$ equals $1$ if $\mu$ is a central weight and $0$
        otherwise.
    \end{enumerate}
  \end{lem}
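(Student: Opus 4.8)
The plan is to read (i) as a well-definedness statement and (ii) as a direct substitution followed by a character-sum (orthogonality) evaluation over the finite group $\Lambda_R/\Lambda'$ (finite since $\Lambda'\subset\Lambda_R$ by Assumption \ref{ass:Lambda'}).

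For part (i), I would first observe that the product $q^{(\mu,\nu)}f(\mu,\nu)$ already descends to $\Lambda/\Lambda'\times\Lambda/\Lambda'$: the factor $f(\mu,\nu)$ does by hypothesis, and the phase $q^{(\mu,\nu)}$ does because replacing $\mu$ by $\mu+\lambda'$ with $\lambda'\in\Lambda'$ changes the exponent by $(\lambda',\nu)\in\ell\Z$, using the standing assumption $\Lambda'\subset\Cent^q(\Lambda_W)$ together with $\nu\in\Lambda\subset\Lambda_W$, so $q^{(\lambda',\nu)}=1$. To descend further to $\pi_1=\Lambda_W/\Lambda_R$, I replace $\mu$ by $\mu+\alpha$ with $\alpha\in\Lambda_R$ and invoke \eqref{f01}: this multiplies $f$ by $q^{-(\nu,\alpha)}$ and multiplies $q^{(\mu,\nu)}$ by $q^{(\alpha,\nu)}$, and since the form is symmetric the two cancel. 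The same computation in the second slot gives invariance under $\nu\mapsto\nu+\alpha$, so $g$ descends to a well-defined function on $\Lambda/\Lambda_R\times\Lambda/\Lambda_R$, regarded as a subset of $\pi_1\times\pi_1$.

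For part (ii), I would invert the definition as $f(\mu,\nu)=|\Lambda_R/\Lambda'|^{-1}q^{-(\mu,\nu)}g(\bar\mu,\bar\nu)$ and substitute into \eqref{f02}--\eqref{f03}. The key manoeuvre is to reorganize each sum over $\Lambda/\Lambda'$ into an outer sum over classes in $\pi_1$ and an inner sum over the fibre $\Lambda_R/\Lambda'$. Since $g$ is constant on fibres by (i), all fibre dependence sits in the phases, and for the first equation of \eqref{f02} the inner sum collapses to the character sum $\sum_{\rho\in\Lambda_R/\Lambda'}q^{(\mu_2-\mu_1,\rho)}$. This is well-defined on $\Lambda_R/\Lambda'$ (again because $\Lambda'\subset\Cent^q(\Lambda_W)$), and by orthogonality of characters it equals $|\Lambda_R/\Lambda'|$ exactly when $q^{(\mu_2-\mu_1,\rho)}=1$ for all $\rho\in\Lambda_R$, i.e. when $\mu_2-\mu_1\in\Cent^q(\Lambda_R)$, and vanishes otherwise. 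This produces the indicator $\delta_{(\mu_2-\mu_1\in\Cent^q(\Lambda_R))}$ in \eqref{g02}, while the surviving factor $q^{(\mu_2-\mu_1,\bar\nu_1)}$ comes from splitting the representative phase $q^{-(\mu_1,\nu_1)-(\mu_2,\nu_2)}$ using $\nu_1+\nu_2=\nu$. The analogous reduction applied to \eqref{f03}, where the inner character sum reads $\sum_{\rho\in\Lambda_R/\Lambda'}q^{-(\mu,\rho)}$ and detects $\mu\in\Cent^q(\Lambda_R)$, yields \eqref{g03}; the remaining equations in each display follow by the symmetry exchanging the two tensor slots.

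The step I expect to require the most care is the phase bookkeeping in \eqref{g02}. After the character sum is evaluated, an overall factor $q^{-(\mu_2,\nu)}$ is left on the left-hand side and $q^{-(\mu_1,\nu)}$ on the right; clearing $|\Lambda_R/\Lambda'|$ and multiplying through by $q^{(\mu_2,\nu)}$ puts the left-hand side into the stated form, while on the right the Kronecker delta $\delta_{\mu_1,\mu_2}$ forces $\mu_1=\mu_2$ and hence trivializes the residual phase $q^{(\mu_2-\mu_1,\nu)}$. Throughout, one must check that every phase and every character sum is well-defined on the relevant quotient, which in each instance reduces to the membership $\Lambda'\subset\Cent^q(\Lambda_W)$ and to the description of $\Cent^q(\Lambda_R)$ used above.
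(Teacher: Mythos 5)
Your proposal is correct and takes essentially the same route as the paper's own proof: in (i) the same cancellation of the phase $q^{(\alpha,\nu)}$ against the factor $q^{-(\nu,\alpha)}$ supplied by \eqref{f01} (you carry this out for all $\alpha\in\Lambda_R$, which is what is actually needed to descend to $\pi_1\times\pi_1$), and in (ii) the same substitution of $f$ in terms of $g$, the same splitting of each sum over $\Lambda/\Lambda'$ into $\pi_1$-classes times $\Lambda_R/\Lambda'$-fibres, and the same evaluation of the inner character sum by orthogonality to produce the indicator $\delta_{(\mu_2-\mu_1\in \Cent^q(\Lambda_R))}$. Your phase bookkeeping at the end (the common factor $q^{-(\mu_2,\nu)}$ on both sides, with $\delta_{\mu_1,\mu_2}$ trivializing the residual phase) matches the paper's comparison step exactly.
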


  Before we proceed with the proof we will comment
  on the relevance of this equations and introduce a definition. 
  For a given Lie algebra $\g$ with root lattice $\Lambda_R$ and
  weight lattice $\Lambda_W$ the solutions of the
  $g(\bar\mu,\bar\nu)$-equations give solutions for an $R_0$ in the
  ansatz $R=R_0\bar\Theta$. Hence, we get possible $R$-matrices for the
  quantum group $u_q(\g,\Lambda_W,\Lambda')$. 

  We divide the equations in two types. 
  \begin{defi}
    For central weight $0$ we call the equations \eqref{g02}-\eqref{g03}
    \emph{group-equations}: 
    \begin{align*}
      g(\bar\mu,\bar\nu) &= \sum_{\bar\nu_1+\bar\nu_2=\bar\nu}
      g(\bar\mu,\bar\nu_1)g(\bar\mu,\bar\nu_2),\\
      g(\bar\mu,\bar\nu) &= \sum_{\bar\mu_1+\bar\mu_2=\bar\mu}
      g(\bar\mu_1,\bar\nu)g(\bar\mu_2,\bar\nu),\\
      1 &= \sum_{\bar\nu} g(0,\bar\nu),\\
      1 &= \sum_{\bar\mu} g(\bar\mu,0).
    \end{align*}
    For $\pi_1=\Lambda_W/\Lambda_R$ of order $n$ this gives us $2n^2+2$
    group-equations.
  
    For central weight $0\neq\zeta\in\Cent^q(\Lambda_R)/\Lambda'$, we call the
    equations \eqref{g02}-\eqref{g03} \emph{diamond-equations} (for reasons
    that will become transparent later): 
    \begin{align*}
      0 &= \sum_{\bar\nu_1+\bar\nu_2=\bar\nu}
      q^{(\zeta,\bar\nu_1)}g(\bar\mu,\bar\nu_1)g(\bar\mu+\bar\zeta,\bar\nu_2),\\
      0 &= \sum_{\bar\mu_1+\bar\mu_2=\bar\mu}
      q^{(\zeta,\bar\mu_1)}g(\bar\mu_1,\bar\nu)g(\bar\mu_2,\bar\nu+\bar\zeta),\\
      0 &= \sum_{\bar\nu} q^{-(\bar\nu,\zeta)}g(\bar\mu+\bar\zeta,\bar\nu),\\
      0 &= \sum_{\bar\mu} q^{-(\bar\mu,\zeta)}g(\bar\mu,\bar\nu+\bar\zeta).
    \end{align*}
    This gives up to $(|\mathrm{Cent}^{[\ell]}(\Lambda_R)/\Lambda'|-1)(2n^2+2)$
    diamond-equations.
  \end{defi}

  \begin{proof}[Proof of Lemma \ref{g-eq}]
    \begin{enumerate}[(i)]
      \item Since $\Lambda'\subset\Cent^q(\Lambda_W)$ we have
        $q^{(\Lambda_W,\Lambda')}=1$ and terms $q^{(\mu,\nu)}$ for
        $\mu,\nu\in\Lambda/\Lambda'$ do not depend on the residue class
        representatives modulo $\Lambda'$.
        We check that the function $g$ is well-defined. Let $\mu,\nu\in
        \Lambda$ and $\lambda'\in\Lambda'$.  Thus,
        \begin{align*}
        g(\mu+\lambda',\nu)
          &= |\Lambda_R/\Lambda'| q^{(\mu+\lambda',\nu)} f(\mu+\lambda',\nu)\\
          &= |\Lambda_R/\Lambda'| q^{(\mu+\lambda',\nu)} q^{-(\lambda',\nu)}
        f(\mu,\nu) \tag*{by eq. \eqref{f01}}\\
          &= |\Lambda_R/\Lambda'| q^{(\mu,\nu)} f(\mu,\nu) \\
          &= g(\mu,\nu),
        \end{align*}
        and analogously for $g(\mu,\nu+\lambda')$.
      \item  We consider equations \eqref{f02}. Let
        $\nu_i,\nu\in\Lambda/\Lambda'$ and write $\nu_i=\bar\nu_i+\alpha_i$
        and $\nu=\bar\nu+\alpha$ with
        $\bar\nu_i,\bar\nu\in\Lambda_W/\Lambda_R$ and
        $\alpha_i,\alpha\in\Lambda_R$, $i=1,2$. For the sum $\nu=\nu_1+\nu_2$
        we get $\bar\nu\equiv\bar\nu_1+\bar\nu_2$ in $\Lambda_W/\Lambda_R$,
        i.e. there is a cocycle $\sigma(\nu_1,\nu_2)\in\Lambda_R$ with
        $\bar\nu=\bar\nu_1+\bar\nu_2+\sigma(\nu_1,\nu_2)$ in $\Lambda_W$ and
        $\alpha=\alpha_1+\alpha_2-\sigma(\nu_1,\nu_2)$. We will write $\sigma$
        for $\sigma(\nu_1,\nu_2)$.
    \begin{align*}
      \sum_{\nu_1+\nu_2=\nu} &f(\mu_1,\nu_1) f(\mu_2,\nu_2) \\
      &= \sum_{\nu_1+\nu_2=\nu} q^{-(\mu_1,\nu_1)+ (\bar\mu_1,\bar\nu_1) -
          (\mu_2,\nu_2)+ (\bar\mu_2,\bar\nu_2)} f(\bar\mu_1,\bar\nu_1)
          f(\bar\mu_2,\bar\nu_2)\\
      &= \sum_{\bar\nu_1+\bar\nu_2=\bar\nu}
         \sum_{\alpha_1+\alpha_2=\alpha+\sigma}
         q^{-(\mu_1,\bar\nu_1) - (\mu_1,\alpha_1) + (\bar\mu_1,\bar\nu_1)}
         q^{-(\mu_2,\bar\nu_2) - (\mu_2,\alpha_2) + (\bar\mu_2,\bar\nu_2)}
          f(\bar\mu_1,\bar\nu_1) f(\bar\mu_2,\bar\nu_2)\\
      &= \sum_{\bar\nu_1+\bar\nu_2=\bar\nu}
         q^{-(\mu_1,\bar\nu_1) + (\bar\mu_1,\bar\nu_1)
            -(\mu_2,\bar\nu_2) + (\bar\mu_2,\bar\nu_2)}
          f(\bar\mu_1,\bar\nu_1) f(\bar\mu_2,\bar\nu_2)
         \sum_{\alpha_1+\alpha_2=\alpha+\sigma}
         q^{- (\mu_1,\alpha_1) - (\mu_2,\alpha_2)} \tag{$\ast$}
    \end{align*}
    Firstly, we consider the second sum over the roots ($\mu_1,\mu_2$
    are fixed).
    \begin{align*}
      \sum_{\alpha_1+\alpha_2=\alpha+\sigma}
      q^{- (\mu_1,\alpha_1) - (\mu_2,\alpha_2)}
      &= \sum_{\alpha_1\in\Lambda_R/\Lambda'}
      q^{- (\mu_1,\alpha_1) - (\mu_2,\alpha+\sigma-\alpha_1)} \\
      &= q^{-(\mu_2,\alpha+\sigma)} \sum_{\alpha_1\in\Lambda_R/\Lambda'}
      q^{(\mu_2-\mu_1,\alpha_1)}
    \end{align*}
    The last sum equals $|\Lambda_R/\Lambda'|$ iff
    $\ell\mid(\mu_2-\mu_1,\alpha_1)$ for all $\alpha_1\in\Lambda_R/\Lambda'$,
    i.e. $\mu_2-\mu_1\in\Cent^q(\Lambda_R)$, and $0$ otherwise.
    Hence, with $C=|\Lambda_R/\Lambda'| \cdot \delta_{(\mu_2-\mu_1\in
    \mathrm{Cent}^{[\ell]}(\Lambda_R))}$, the sum $(\ast)$ simplifies to
    \begin{align*}
      C\,\cdot&\sum_{\bar\nu_1+\bar\nu_2=\bar\nu}
      q^{-(\mu_1,\bar\nu_1) + (\bar\mu_1,\bar\nu_1)
            -(\mu_2,\bar\nu_2) + (\bar\mu_2,\bar\nu_2)}
         q^{-(\mu_2,\alpha+\sigma)}
          f(\bar\mu_1,\bar\nu_1) f(\bar\mu_2,\bar\nu_2) \\
      &=C\cdot\sum_{\bar\nu_1+\bar\nu_2=\bar\nu}
         q^{-(\mu_1,\bar\nu_1) + (\bar\mu_1,\bar\nu_1)
            + (\bar\mu_2,\bar\nu_2)
            -(\mu_2,\bar\nu_1+\bar\nu_2+\alpha+\sigma) 
            + (\mu_2,\bar\nu_1)}
          f(\bar\mu_1,\bar\nu_1) f(\bar\mu_2,\bar\nu_2) \\
      &=C\cdot q^{-(\mu_2,\nu)}
         \sum_{\bar\nu_1+\bar\nu_2=\bar\nu}
         q^{(\mu_2-\mu_1,\bar\nu_1)} 
       q^{(\bar\mu_1,\bar\nu_1)} f(\bar\mu_1,\bar\nu_1)
       q^{(\bar\mu_2,\bar\nu_2)} f(\bar\mu_2,\bar\nu_2),
    \end{align*}
     Comparing this with the right hand side of the first equation of
     \eqref{f02} gives
    \begin{multline*}
         C\cdot q^{-(\mu_2,\nu)}
         \sum_{\bar\nu_1+\bar\nu_2=\bar\nu}
         q^{(\mu_2-\mu_1,\bar\nu_1)} 
       q^{(\bar\mu_1,\bar\nu_1)} f(\bar\mu_1,\bar\nu_1)
       q^{(\bar\mu_2,\bar\nu_2)} f(\bar\mu_2,\bar\nu_2) \\
       = \delta_{\mu_1,\mu_2} q^{-(\mu_2,\nu)+(\bar\mu_2,\bar\nu)}
       f(\bar\mu_2,\bar\nu),
    \end{multline*}
    and with the definition of $g(\bar\mu,\bar\nu)=|\Lambda_R/\Lambda'|
    q^{(\mu,\nu)} f(\mu,\nu)$ we get the  following equation
    \begin{equation*}
        \sum_{\bar\nu_1+\bar\nu_2 = \bar\nu} \delta_{(\mu_2-\mu_1\in
        \Cent^q(\Lambda_R))} q^{(\mu_2-\mu_1,\bar\nu_1)} g(\bar\mu_1,\bar\nu_1)
        g(\bar\mu_2,\bar\nu_2)
        = \delta_{\mu_1,\mu_2} g(\bar\mu_1,\bar\nu).
    \end{equation*}
    Analogously, we get the equation of the sum
    $\sum_{\bar\mu_1+\bar\mu_2=\bar\mu}$.

    We now consider the equations \eqref{f03}. Again, $\nu=\bar\nu+\alpha$
    as above.
    \begin{align*}
      \sum_{\nu\in\Lambda/\Lambda'} f(\mu,\nu) 
      &= \sum_{\nu} q^{-(\mu,\nu)+(\bar\mu,\bar\nu)} f(\bar\mu,\bar\nu) \\
      &= \sum_{\bar\nu} q^{(\bar\mu,\bar\nu)} f(\bar\mu,\bar\nu) 
         \sum_{\alpha\in\Lambda_R/\Lambda'} q^{-(\mu,\bar\nu+\alpha)} \\
      &= \sum_{\bar\nu} q^{-(\mu-\bar\mu,\bar\nu)} f(\bar\mu,\bar\nu) 
         \sum_{\alpha\in\Lambda_R/\Lambda'} q^{-(\mu,\alpha)} \\
      &= \delta_{(\mu\in \mathrm{Cent}^{[\ell]}(\Lambda_R))} |\Lambda_R/\Lambda'|
         \sum_{\bar\nu} q^{-(\mu-\bar\mu,\bar\nu)} f(\bar\mu,\bar\nu) \\
      &= \delta_{(\mu\in \mathrm{Cent}^{[\ell]}(\Lambda_R))} 
         \sum_{\bar\nu} q^{-(\mu,\bar\nu)} g(\bar\mu,\bar\nu) \\
      &= \delta_{\mu,0}. \qedhere
    \end{align*}
    \end{enumerate}
  \end{proof}

\section{The first type of equations} \label{group-equations}

  \subsection{Equations of \emph{group-type}}\label{sec:Fourier}
  \begin{defi}\label{GroupEquations}
    For an abelian group $G$ we define a set of equations for $|G|^2$
    variables $g(x,y)$, $x,y\in G$, which we call \emph{group-equations}.
    \begin{align}
      g(x,y) &= \sum_{y_1+y_2=y} g(x,y_1)g(x,y_2),\label{grpeq01}\\
      g(x,y) &= \sum_{x_1+x_2=x} g(x_1,y)g(x_2,y),\label{grpeq02}\\
      1 &= \sum_{y\in G} g(0,y),\label{grpeq03}\\
      1 &= \sum_{x\in G} g(x,0).\label{grpeq04}
    \end{align}
    Thus, there are $2|G|^2+2$ group-equations in $|G|^2$ variables with
    values in $\C$.
  \end{defi}

  These equations are the equations in Lemma \ref{g-eq} and the following
  Definition for central weight $\zeta=0$.

  \begin{thm}\label{solutionsgrpeq}
    Let $G$ be an abelian group of order $N$, $H_1,H_2$ subgroups with
    $|H_1|=|H_2|=d$.
    Let $\omega\colon H_1\times H_2\to\C^{\times}$ be a pairing of groups.
    Here, the group $G$ is written additively and $\C^{\times}$
    multiplicatively, thus we have $\omega(x,y)^d = 1$ for all $x\in H_1,
    y\in H_2$. Then the function
    \begin{equation}\label{g-solution}
      g\colon G\times G\to \C, ~ (x,y) \mapsto \frac 1d 
      \, \omega(x,y)\delta_{(x\in H_1)}\delta_{(y\in H_2)}
    \end{equation}
    is a solution of the group-equations \eqref{grpeq01}-\eqref{grpeq04} of
    $G$.
  \end{thm}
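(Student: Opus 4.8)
The plan is to verify the four group-equations directly, the only structural inputs being that $\omega$ is a \emph{bicharacter} (bilinear in each argument), that $H_1,H_2$ are subgroups of $G$, and the resulting normalizations $\omega(0,y)=\omega(x,0)=1$. First I would record these facts: bilinearity gives $\omega(x,y_1)\omega(x,y_2)=\omega(x,y_1+y_2)$ and $\omega(x_1,y)\omega(x_2,y)=\omega(x_1+x_2,y)$, while setting one argument to $0$ forces $\omega(0,y)=\omega(0,y)^2$, hence $\omega(0,y)=1$, and symmetrically $\omega(x,0)=1$.

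For \eqref{grpeq01} I would substitute the ansatz \eqref{g-solution} into the right-hand side. The two Kronecker factors $\delta_{(x\in H_1)}$ coincide, so the whole sum vanishes unless $x\in H_1$, which already matches the left-hand side off $H_1$. Assuming $x\in H_1$, the summand is nonzero only when both $y_1,y_2\in H_2$; since $H_2$ is a subgroup this forces $y=y_1+y_2\in H_2$, and conversely every $y\in H_2$ admits exactly $|H_2|=d$ decompositions $y=y_1+y_2$ with $y_i\in H_2$ (choose $y_1\in H_2$ freely and set $y_2=y-y_1$). Bilinearity collapses $\omega(x,y_1)\omega(x,y_2)$ to $\omega(x,y)$, so the right-hand side equals $\tfrac1{d^2}\,\delta_{(x\in H_1)}\delta_{(y\in H_2)}\cdot d\,\omega(x,y)=g(x,y)$, as required. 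Equation \eqref{grpeq02} follows by the symmetric argument, interchanging the roles of $(x,H_1)$ and $(y,H_2)$ and using bilinearity in the first slot.

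For \eqref{grpeq03} I would note $0\in H_1$, so $\delta_{(0\in H_1)}=1$, and $\omega(0,y)=1$; the sum over $y\in G$ then restricts to $y\in H_2$ and reads $\tfrac1d\sum_{y\in H_2}1=\tfrac{|H_2|}{d}=1$. Equation \eqref{grpeq04} is identical with the two arguments swapped.

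The computation carries no genuine obstacle; the only point requiring care is the counting step in \eqref{grpeq01}--\eqref{grpeq02}, where one must check that the number of admissible decompositions of a fixed $y\in H_2$ inside $H_2$ is exactly $d$ and is \emph{independent} of $y$, so that it cancels against the normalization $\tfrac1{d^2}$ to leave the prefactor $\tfrac1d$. I would also remark that the stated condition $\omega(x,y)^d=1$ is not needed for the verification itself — it is an automatic consequence of $\omega$ being a pairing on groups of exponent dividing $d$, since $\omega(x,y)^d=\omega(dx,y)=\omega(0,y)=1$ — and serves only to describe which pairings $\omega$ are admissible.
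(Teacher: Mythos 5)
Your proof is correct and follows essentially the same route as the paper's: direct substitution of the ansatz into the four equations, using multiplicativity of the pairing to collapse $\omega(x,y_1)\omega(x,y_2)$ into $\omega(x,y)$ and the count of $|H_2|=d$ decompositions of $y$ inside $H_2$ to cancel against the $\tfrac{1}{d^2}$ normalization. Your additional remarks (the explicit bookkeeping of the delta factors and the observation that $\omega(x,y)^d=1$ follows automatically from Lagrange's theorem) are finer detail than the paper provides, but not a different argument.
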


  \begin{proof}
    Let $G,H_1,H_2$ and $\omega$ be as in the theorem. We insert the
    function $g$ as in \eqref{g-solution} in the group-equation
    \eqref{grpeq01} of $G$. Let $x,y\in G$.
    \begin{align*}
      \sum_{y_1+y_2=y}g(x,y_1)g(x,y_2)
      &=\left(\frac 1d \right)^2\sum_{y_1+y_2=y} 
        \omega(x,y_1)\omega(x,y_2) 
        \delta_{(x\in H_1)}\delta_{(y_1\in H_2)}\delta_{(y_2\in H_2)} \\
      &=\left(\frac 1d\right)^2 \sum_{y_1+y_2=y}
        \omega(x,y_1+y_2)
        \delta_{(x\in H_1)}\delta_{(y_1\in H_2)}\delta_{(y_2\in H_2)} \\
      &=\left(\frac 1d\right)^2 |H_2|\, \omega(x,y) \delta_{(x\in
      H_1)}\delta_{(y\in H_2)}  \\
      &= g(x,y).
    \end{align*}
    Analogously for the sum in \eqref{grpeq02}. We now insert the function
    $g$ in \eqref{grpeq03}:
    \begin{equation*}
      \sum_{y\in G} g(0,y) 
      = \frac 1d\sum_{y\in G} \omega(0,y)\delta_{(y\in H_2)}
      = \frac 1d \sum_{y\in H_2}1
      = 1. \qedhere
    \end{equation*}
  \end{proof}

  \begin{que}
    Are these all solutions of the group-equations for a
    given group $G$? 
  \end{que}

  \subsection{Results for all fundamental groups of Lie algebras}
    We now treat the cases $G=\Z_N$ for $N\geq 1$ and $G=\Z_2\times\Z_2$,
    since these are the only examples of fundamental groups $\pi_1$ of root
    systems.

  \begin{thm}\label{allsolutionsgrpeq}
    In the following cases, the functions $g$ of Theorem
    \ref{solutionsgrpeq} are the only solutions of the group-equations
    \eqref{grpeq01}-\eqref{grpeq04} of $G$. 
    \begin{enumerate}[(a)]
      \item For $G=\Z_N$, the cyclic groups of order $N$. Here, we get
        $\sum_{d\mid N} d$ different solutions. 
      \item For $G=\Z_2\times\Z_2$. Here we get $35$ different solutions.
    \end{enumerate}
  \end{thm}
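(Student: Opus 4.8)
The plan is to prove that the solutions from Theorem \ref{solutionsgrpeq} exhaust all solutions for $G = \Z_N$ and $G = \Z_2 \times \Z_2$. The natural framework is discrete Fourier analysis on the finite abelian group $G$. For each fixed $x \in G$, equation \eqref{grpeq01} says that $y \mapsto g(x,y)$ is \emph{multiplicative under convolution}: writing $g_x(y) := g(x,y)$, the condition $g_x = g_x * g_x$ means that the Fourier transform $\widehat{g_x}$ is idempotent, i.e. $\widehat{g_x}(\chi) \in \{0,1\}$ for every character $\chi \in \widehat{G}$. Symmetrically, \eqref{grpeq02} forces $x \mapsto g(x,y)$ to be convolution-idempotent for each fixed $y$. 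Thus each row and each column of the matrix $(g(x,y))$, after Fourier transform, is a $0/1$-indicator of a subset of characters.

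\emph{Reducing to a combinatorial structure.} Passing to the Fourier transform in the \emph{first} variable of $g_x$ for each $x$, define $G(x,\chi) := \widehat{g_x}(\chi) = \sum_y g(x,y)\overline{\chi(y)}$; then \eqref{grpeq01} becomes $G(x,\chi) \in \{0,1\}$. Applying \eqref{grpeq02} and the normalization \eqref{grpeq03}--\eqref{grpeq04} translates into structural constraints on the support set $S = \{(x,\chi) : G(x,\chi)=1\}$. Concretely, the row/column convolution-idempotency forces the support in each variable to be a union of cosets, and the two normalization equations pin down that the $x$-support is a subgroup $H_1 \leq G$ and the $\chi$-support (under the identification $\widehat{G} \cong G$) is a subgroup $H_2 \leq G$ of the \emph{same} order $d = |H_1| = |H_2|$, with the off-diagonal data encoded by a bihomomorphism $\omega$. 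The task then becomes: classify all such compatible $(H_1, H_2, \omega)$ and verify the count matches the claim.

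\emph{Carrying out the count.} For part (a), $G = \Z_N$: every subgroup is cyclic of order $d \mid N$, and for each divisor $d$ there is a unique subgroup $H_1 = H_2$ of that order, with the pairings $\omega \colon \Z_d \times \Z_d \to \C^\times$ being exactly the $d$ nondegenerate-diagonal choices (parametrized by a primitive-root power). Summing the number of solutions over divisors gives $\sum_{d \mid N} d$, as claimed. For part (b), $G = \Z_2 \times \Z_2$: I would enumerate the subgroup pairs $(H_1, H_2)$ of equal order — the trivial group, the three order-$2$ subgroups, and the whole group — and for each count the admissible pairings $\omega$ valued in $\{\pm 1\}$, checking compatibility with the idempotency constraints. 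A bookkeeping over these cases ($1$ solution for $H_i$ trivial, the contributions from each order-$2$ subgroup, and the $\Z_2\times\Z_2$ pairings) should total $35$.

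\emph{The main obstacle.} The genuinely delicate point is the \emph{converse direction}: showing that idempotency of each row and each column \emph{together} forces the rigid product structure $g(x,y) = \tfrac1d\,\omega(x,y)\,\delta_{x\in H_1}\delta_{y\in H_2}$ with $H_1, H_2$ \emph{subgroups} and $\omega$ a genuine \emph{bihomomorphism}. Row-idempotency and column-idempotency are each easy in isolation, but coupling them so that the row-supports are constant across the common column-support (and vice versa), and that the phase $\omega$ satisfies bimultiplicativity rather than being an arbitrary function, requires careful use of \eqref{grpeq01}--\eqref{grpeq02} simultaneously. I expect this compatibility argument — rather than the final enumeration — to be where the real work lies, especially pinning down that $|H_1| = |H_2|$ and that no exotic non-coset-type solutions arise for $\Z_2 \times \Z_2$.
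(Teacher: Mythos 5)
Your Fourier setup is the right first move: equation \eqref{grpeq01} for fixed $x$ indeed says $g_x * g_x = g_x$, hence $\widehat{g_x}$ is $\{0,1\}$-valued, and symmetrically for columns. But from that point on the proposal asserts rather than proves the decisive claims, and one of the assertions is not even a consequence of what precedes it: convolution-idempotency of a single row does \emph{not} force its support to be a (nontrivial) union of cosets, let alone a subgroup --- on $\Z_4$ the function $y\mapsto\frac14(1+i^y)$ is convolution-idempotent but is supported on $\{0,1,3\}$. The entire content of the theorem is exactly the rigidity step you defer to your last paragraph: that imposing row-idempotency, column-idempotency and the normalizations \emph{simultaneously} excludes all such non-subgroup configurations and forces $g(x,y)=\frac1d\,\omega(x,y)\,\delta_{(x\in H_1)}\delta_{(y\in H_2)}$ with $H_1,H_2$ subgroups of equal order and $\omega$ a genuine pairing. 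You correctly identify this as ``where the real work lies,'' but you do not do that work, so what you have is a plan with its central step missing, not a proof.

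For scale: the paper does not prove this rigidity in-house either. Part (a) is quoted from a separate paper of the authors (\cite{LN14}, Theorem 5.6), which the introduction describes as settling a question in additive combinatorics; the difficulty for composite $N$ is precisely that partial sums of $N$-th roots of unity can vanish or coincide in exotic ways, and every such configuration must be excluded as a potential row or column of $g$ --- this is a substantive theorem about vanishing sums of roots of unity, not a routine compatibility check. Part (b) is settled in the paper by an explicit computer ({\sf MAPLE}) verification of the polynomial system in the $16$ unknowns $g(x,y)$; your proposed ``bookkeeping over these cases'' only re-enumerates the known solutions of Theorem \ref{solutionsgrpeq} (the easy direction, already carried out in Example \ref{2x2Solutions}) and does nothing to exclude solutions outside the product form. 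So for both (a) and (b) the uniqueness assertion --- the actual claim of the theorem --- remains unproven in your proposal.
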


  \begin{proof}
    \begin{enumerate}[(a)]
      \item This is the content of \cite{LN14}, Theorem 5.6.
      \item We have checked this explicitly via {\sf MAPLE}. \qedhere
    \end{enumerate}
  \end{proof}

  \begin{expl}\label{cyclicSolutions}
    Let $G=\Z_N$, $N\geq 1$. For any divisor $d$ of $N$ there is a unique
    subgroup $H=\frac Nd\Z_N\cong \Z_d$ of $G$ of order $d$. By Theorem
    \ref{solutionsgrpeq} we have, that for any pairing $\omega\colon H\times
    H\to \C^{\times}$, the function $g$ as in \eqref{g-solution}
    is a solution of the group-equations \eqref{grpeq01}-\eqref{grpeq04}.
    We give the solution explicitly. For $H=\car{h}$, $h\in \frac Nd\Z_n$, we
    get a pairing $\omega\colon H\times H\to \C^{\times}$ by
    $\omega(h,h)=\xi$ with $\xi$ a $d$-th root of unity, not necessarily
    primitive. Thus, the function \eqref{g-solution} translates to
    \begin{equation}\label{eq:cyclicSolutions}
      g\colon G\times G\to\C,~ (x,y)\mapsto \frac 1d \, \xi^{\frac{xy}{(N/d)^2}}
      \delta_{(\frac Nd\mid x)}\delta_{(\frac Nd\mid y)}.
    \end{equation}
  \end{expl}

  \begin{expl}\label{2x2Solutions}
    Let $G=\Z_2\times\Z_2=\car{a,b}$. For $H_1=H_2=G$ there are $2^4=16$ possible
    parings, since a pairing is given by determining the values of
    $\omega(x,y)=\pm1$ for $x,y\in\{a,b\}$. In $G$, there are $3$ different
    subgroups of order $2$, hence there are $9$ possible pairs $(H_1,
    H_2)$ of groups $H_i$ of order $2$. For each pair, there are two
    possible choices for $\omega(x,y)=\pm1$, $x,y$ being the generators of
    $H_1$, resp. $H_2$. Thus, we get $18$ pairings for subgroups of order
    $d=2$.  For $H_1=H_2=\{0\}$ there is only one pairing, mapping $(0,0)$
    to $1$.  Thus, we have $35$ pairings in total.

    \begin{narrow}{-.6cm}{0cm}
    \begin{tabular}{c|c|c|c|c}
      $\#$ & $H_i\cong$ & $H_1$ & $H_2$ & $\omega$ \\ \hline
      $16$ & $\Z_2\times \Z_2$ & $\car{a,b}$ & $\car{a,b}$ & $\omega(x,y)=\pm1$
      for $x,y\in\{a,b\}$ \\ \hline
      $9\times 2$ & $\Z_2$ & $\car{x}$, $x\in\{a,b,a+b\}$ & $\car{y}$,
      $y\in\{a,b,a+b\}$ & $\omega(x,y)=\pm1$\\ \hline
      $1$ & $\Z_1$ & $\{0\}$ & $\{0\}$ & $\omega(0,0)=1$
    \end{tabular}
    \end{narrow}
  \end{expl}

\section{Quotient diamonds and the second type of equations}
  \label{diamond-equations}

  \subsection{Quotient diamonds and equations of \emph{diamond-type}}
  \begin{defi}\label{def:diamond}
    Let $G$ and $A$ be abelian groups and $B,C,D$ subgroups of $A$,
    such that $D=B\cap C$. We call a tuple
    $(G,A,B,C,D,\varphi_1,\varphi_2)$ with injective group morphisms
    $\varphi_1\colon A/B\rightarrow G^*=\Hom(G,\C^{\times})$ and
    $\varphi_2\colon A/C\rightarrow G$ a \emph{diamond for $G$}. We will
    visualize the situation with the following diagram
    \begin{equation*}
    \begin{tikzpicture}[scale=.5]
      \node (A) at (0,2) {$A$};
      \node (B) at (-2,0) {$B$};
      \node (C) at (2,0) {$C$};
      \node (D) at (0,-2) {$D$};
      \draw (A) -- (B) -- (D) -- (C) -- (A);
      \node (phi2) at (3.5,1) {$\hookrightarrow G$};
      \node (phi1) at (-3.5,1) {$G^*\hookleftarrow$};
    \end{tikzpicture}
    \end{equation*}
  \end{defi}

  \begin{defi}
    Let $(G,A,B,C,D,\varphi_1,\varphi_2)$ be a diamond for $G$.  For $a\in A$
    and not in $B\cap C$ we define the following equations for the $|G|^2$
    variables $g(x,y)$, $x,y\in G$:
    \begin{align}
      0 &= \sum_{y_1+y_2=y,y_i\in G} \varphi_1(a)(y_1)
      g(x,y_1)g(x+\varphi_2(a),y_2),\label{diaeq01}\\
      0 &= \sum_{x_1+x_2=x,x_i\in G} \varphi_1(a)(x_1)
      g(x_1,y)g(x_2,y+\varphi_2(a)),\label{diaeq02}\\
      0 &= \sum_{y\in G} \left(\varphi_1(a)(y)\right)^{-1}
      g(\varphi_2(a),y),\label{diaeq03}\\
      0 &= \sum_{x\in G} \left(\varphi_1(a)(x)\right)^{-1}
      g(x,\varphi_2(a))\label{diaeq04}.
    \end{align}
    We call this set of equations \emph{diamond-equations} for the diamond of
    $G$.
    Here, $\varphi_i(a)$ denotes the image of $a+B$, resp. $a+C$, for 
    $a\in A$ under $\varphi_1$, resp. $\varphi_2$. 
    
    These are up to $(|A|-1)(2|G|^2+2)$ equations in $|G|^2$ variables
    with values in $\C$.
  \end{defi}

  We show how these equations arise in the situation of Lemma \ref{g-eq}.

  \begin{lem}\label{diamonds}
    Let $G=\pi_1$, the fundamental group of a root system $\Phi$. Assume
    $\Lambda'$ is a sublattice of $\Lambda_R$, contained in
    $\Cent^q(\Lambda_W)$.  Let $A=\Cent^q(\Lambda_R)/\Lambda'$,
    $B=\Cent^q(\Lambda_W)/\Lambda'$, $C=\Cent^q(\Lambda_R)\cap \Lambda_R/\Lambda'$
    and $D=\Cent^q(\Lambda_W)\cap\Lambda_R/\Lambda'$.  Then there exist
    injections $\varphi_1\colon A/B\to\pi_1^*$ and $\varphi_2\colon
    A/C\to\pi_1$, such that $(G,A,B,C,D,\varphi_1,\varphi_2)$ is a diamond for
    $G$.
    \begin{equation*}
    \begin{tikzpicture}[scale=.7]
      \node (A) at (0,2) {$\mathrm{Cent}^{[\ell]}(\Lambda_R)/\Lambda'$};
      \node (B) at (-2,0) {$\mathrm{Cent}^{[\ell]}(\Lambda_W)/\Lambda'$};
      \node (C) at (2,0) {$\mathrm{Cent}^{[\ell]}(\Lambda_R)\cap\Lambda_R/\Lambda'$};
      \node (D) at (0,-2) {$\mathrm{Cent}^{[\ell]}(\Lambda_W)\cap\Lambda_R/\Lambda'$};
      \draw (A) -- (B) -- (D) -- (C) -- (A);
      \node (phi2) at (2.5,1) {$\hookrightarrow \pi_1$};
      \node (phi1) at (-2.5,1) {$\pi_1^*\hookleftarrow$};
    \end{tikzpicture}
    \end{equation*}
  \end{lem}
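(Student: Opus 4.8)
The plan is to produce the two maps directly, $\varphi_2$ from the inclusion $\Cent^q(\Lambda_R)\subset\Lambda_W$ and $\varphi_1$ from the braiding pairing $q^{(-,-)}$, and to read off the required injectivity from the elementary isomorphism theorems. First I would record the ambient inclusions. Since $\Lambda_R\subset\Lambda_W$, any $\eta$ orthogonal mod $\ell$ to all of $\Lambda_W$ is also orthogonal to $\Lambda_R$, so $\Cent^q(\Lambda_W)\subset\Cent^q(\Lambda_R)$; together with $\Cent^q(\Lambda_R)\cap\Lambda_R\subset\Cent^q(\Lambda_R)$ this shows that $B$ and $C$ are subgroups of $A$. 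By hypothesis $\Lambda'\subset\Lambda_R$ and $\Lambda'\subset\Cent^q(\Lambda_W)$, so $\Lambda'\subset\Cent^q(\Lambda_W)\cap\Lambda_R$, whence $\Lambda'$ lies in the preimage of each of $A,B,C,D$ and all four quotients are well defined. To check $D=B\cap C$ I would pass to preimages in $\Cent^q(\Lambda_R)$: the preimage of $B\cap C$ is $\Cent^q(\Lambda_W)\cap\bigl(\Cent^q(\Lambda_R)\cap\Lambda_R\bigr)$, which collapses to $\Cent^q(\Lambda_W)\cap\Lambda_R$ because $\Cent^q(\Lambda_W)\subset\Cent^q(\Lambda_R)$; dividing by $\Lambda'$ returns exactly $D$.

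For $\varphi_2$ I would compose the inclusion $\Cent^q(\Lambda_R)\hookrightarrow\Lambda_W$ with the projection $\Lambda_W\to\Lambda_W/\Lambda_R=\pi_1$. The kernel of this composite is $\Cent^q(\Lambda_R)\cap\Lambda_R$, so it descends to an injection $\Cent^q(\Lambda_R)/(\Cent^q(\Lambda_R)\cap\Lambda_R)\hookrightarrow\pi_1$; identifying the source with $A/C$ via the third isomorphism theorem yields the injective $\varphi_2\colon A/C\to\pi_1$.

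For $\varphi_1$ I would use the pairing to build characters of $\pi_1$. For $\eta\in\Cent^q(\Lambda_R)$ set $\chi_\eta(\mu)=q^{(\eta,\mu)}$ for $\mu\in\Lambda_W$; this is well defined by Convention \ref{conv} even though $(\eta,\mu)$ may be non-integral, is multiplicative in $\mu$, and is trivial on $\Lambda_R$ precisely because $\eta\in\Cent^q(\Lambda_R)$, so $\chi_\eta\in\pi_1^*=\Hom(\pi_1,\C^\times)$. The assignment $\eta\mapsto\chi_\eta$ is a group homomorphism $\Cent^q(\Lambda_R)\to\pi_1^*$, and $\chi_\eta$ is trivial iff $\ell\mid(\eta,\mu)$ for all $\mu\in\Lambda_W$, i.e. iff $\eta\in\Cent^q(\Lambda_W)$. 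Hence the kernel is exactly $\Cent^q(\Lambda_W)$, which contains $\Lambda'$, the map descends to an injection $\Cent^q(\Lambda_R)/\Cent^q(\Lambda_W)\hookrightarrow\pi_1^*$, and identifying the source with $A/B$ gives the injective $\varphi_1\colon A/B\to\pi_1^*$. With $D=B\cap C$ and these two injections, $(G,A,B,C,D,\varphi_1,\varphi_2)$ is a diamond in the sense of Definition \ref{def:diamond}.

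There is no deep obstacle here: the content is the bookkeeping of nested centralizer lattices together with repeated use of the isomorphism theorems. The only points needing genuine care are the well-definedness of $\chi_\eta$ (the pairing on $\Lambda_W$ is only rational-valued, so one must invoke the convention $q^a=\exp(2\pi i a/\ell)$ rather than treat $q^{(\eta,\mu)}$ as an honest power of a root of unity) and the precise identification of the two kernels, which is exactly what pins the images down to $\Cent^q(\Lambda_W)$ and $\Cent^q(\Lambda_R)\cap\Lambda_R$ rather than to larger subgroups. One may optionally record $D=\Lambda_R^{[\ell]}/\Lambda'$ via Lemma \ref{CentW}, but this identification is not needed for the diamond structure itself.
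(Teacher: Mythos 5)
Your proof is correct and follows essentially the same route as the paper's: $\varphi_2$ is the inclusion $\Cent^q(\Lambda_R)\hookrightarrow\Lambda_W$ composed with the projection to $\pi_1$, with kernel $\Cent^q(\Lambda_R)\cap\Lambda_R$, and $\varphi_1$ is $\eta\mapsto q^{(\eta,-)}$ viewed as a character of $\pi_1$, with kernel $\Cent^q(\Lambda_W)$, each then passed through the isomorphism theorems. The only differences are cosmetic and in your favor: you verify $D=B\cap C$ explicitly (which the paper leaves implicit) and you flag the need for Convention \ref{conv} to make sense of $q^{(\eta,\mu)}$ for non-integral pairings, whereas the paper instead works with the explicit generators $\ell_{[i]}\lambda_i$ of $\Lambda_W^{[\ell]}$.
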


  \begin{proof}
    Recall from Lemmas \ref{CentR} and \ref{CentW}, that we have
    $\Cent^q(\Lambda_R)=\Lambda_W^{[\ell]}$ and
    $\Cent^q(\Lambda_W)\cap\Lambda_R=\Lambda_R^{[\ell]}$. We have $A/C\cong
    \Lambda_W^{[\ell]}/(\Lambda_W^{[\ell]}\cap\Lambda_R)$ and
    $\Lambda_W^{[\ell]}\subset\Lambda_W$,

    To show the existence of an injective morphism $\varphi_2:A/C\to\pi_1$, we
    define $\tilde\varphi_2$ on $\Lambda_W^{[\ell]}$ and calculate the kernel.
    By Definition \ref{elllattice}, the generators of $\Lambda_W^{[\ell]}$ are
    $\ell_{[i]}\lambda_i$ for all $i\in I$, with $\ell_{[i]}\df
    \ell/\gcd(\ell,d_i)$. Thus
    \begin{align*}
      \tilde\varphi_2 &\colon \Lambda_W^{[\ell]}\to \pi_1,~
      \ell_{[i]}\lambda_i \mapsto \ell_{[i]}\lambda_i +\Lambda_R
    \end{align*}
    gives a group morphism. Since
    $\Lambda'\subset\Lambda_R\cap\Lambda_W^{[\ell]}=\ker \tilde\varphi_2$,
    this induces a well-definend map $\varphi_2\colon A/\Lambda'\to\pi_1$.
    Obviously, the kernel of this map is $\Lambda_W^{[\ell]}\cap\Lambda_R$,
    hence the desired injection $\varphi_2\colon A/C\to\pi_1$ exists and is
    given by taking $\lambda+(\Lambda_W^{[\ell]}\cap\Lambda_R)$ modulo
    $\Lambda_R$, $\lambda\in \Lambda_W^{[\ell]}$.

    Now, we show the existence of $\varphi_1$. The map
    \begin{equation*}
      f\colon \Cent^q(\Lambda_R)\to\Hom(\Lambda_W,\C^{\times}),~
      \lambda\mapsto (\Lambda_W\to\C^{\times},~\eta\mapsto q^{(\lambda,\eta)})
    \end{equation*}
    is a group morphism. We define $g\colon \Hom(\Lambda_W,\C^{\times}) \to
    \Hom(\Lambda_W/\Lambda_R,\C^{\times})$ by $g(\psi)\df \psi\circ p$, where
    $p$ is the natural projection $\Lambda_W\to\Lambda_W/\Lambda_R$. Thus, the
    upper right triangle of the following diagram commutes.

    \begin{equation*}
    \begin{tikzcd}
      \Cent^q(\Lambda_R) \arrow{rd}[swap]{g\circ f} \arrow{r}{f} \arrow{d}[swap]{}
      & \Hom(\Lambda_W,\C^{\times}) \arrow{d}{g} \\
      \Cent^q(\Lambda_R)/\Cent^q(\Lambda_W) \arrow{r}[swap]{\varphi_1}
      & \Hom(\pi_1,\C^{\times}) \\
    \end{tikzcd}
    \end{equation*}
    There exists $\lambda\in\ker g\circ f$, iff $q^{(\lambda,\bar\eta)}=1$ for all
    $\bar\eta\in\pi_1$. Since $\lambda\in\Cent^q(\Lambda_R)$, this is equivalent
    to $q^{(\lambda,\eta)}=1$ for all $\eta\in\Lambda_W$, hence
    $\lambda\in\Cent^q(\Lambda_W)$. Thus, we get $\varphi_1$ as desired, which
    is well defined as map from
    $\Cent^q(\Lambda_R)/\Lambda'\big/\Cent^q(\Lambda_W)/\Lambda'$ since
    $\Lambda'\subset\Cent^q(\Lambda_W)=\ker f$. 
  \end{proof}

  \begin{lem}\label{necessCrit}
    Let $(G,A,B,C,D,\varphi_1,\varphi_2)$ be a diamond as in Lemma
    \ref{diamonds}. If $\Cent^q(\Lambda_W)\cap\Lambda_R/\Lambda'\neq
    0$, then none of the solutions of the group-equations
    \eqref{grpeq01}-\eqref{grpeq04} are solutions to the diamond-equations
    \eqref{diaeq01}-\eqref{diaeq04}.  Hence under our assumptions
    \ref{ass:Lambda'}, the existence of an $R$-matrix requires necessarily the
    choice $\Lambda'=\Cent^q(\Lambda_W)\cap\Lambda_R$.
  \end{lem}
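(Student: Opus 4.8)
The plan is to show that whenever the core $D=\Cent^q(\Lambda_W)\cap\Lambda_R/\Lambda'$ of the diamond is nonzero, any $g$ solving the group-equations \eqref{grpeq01}--\eqref{grpeq04} is forced to violate the equation attached to a nonzero element of $D=B\cap C$, so that the full system of Lemma \ref{g-eq} becomes inconsistent. First I would recall from Lemmas \ref{CentR} and \ref{CentW} that $A=\Cent^q(\Lambda_R)/\Lambda'=\Lambda_W^{[\ell]}/\Lambda'$ and $D=\Lambda_R^{[\ell]}/\Lambda'$, and that in the construction of Lemma \ref{diamonds} the coordinate maps read $\varphi_2(\zeta)=\zeta+\Lambda_R\in\pi_1$ and $\varphi_1(\zeta)=(\bar\eta\mapsto q^{(\zeta,\bar\eta)})\in\pi_1^{*}$.

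The key step is to choose a representative $0\neq\zeta\in D$, i.e. $\zeta\in(\Cent^q(\Lambda_W)\cap\Lambda_R)\setminus\Lambda'$, and to observe that \emph{both} coordinate maps are trivial on it: since $\zeta\in\Lambda_R$ we get $\varphi_2(\zeta)=0$ in $\pi_1$, and since $\zeta\in\Cent^q(\Lambda_W)$ we have $q^{(\zeta,\eta)}=1$ for all $\eta\in\Lambda_W$, whence $\varphi_1(\zeta)=1$. Feeding $a=\zeta$ into the equation \eqref{g03} of Lemma \ref{g-eq}, which is the source of the diamond-type equation \eqref{diaeq03}, collapses every $q$-factor to $1$ and sends the argument $\varphi_2(\zeta)$ to $0$, so the left-hand side becomes $\sum_{\bar\nu} g(0,\bar\nu)$. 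Its right-hand side is $\delta_{\zeta,0}$ evaluated in $\Lambda/\Lambda'$; but $\zeta\neq 0$ there, so it vanishes. Hence this constraint forces $\sum_{\bar\nu}g(0,\bar\nu)=0$, in direct contradiction with the group-equation \eqref{grpeq03}, which asserts $\sum_{\bar\nu}g(0,\bar\nu)=1$. The symmetric equation \eqref{diaeq04} versus \eqref{grpeq04} yields the same clash.

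To finish, I would invoke Theorem \ref{allsolutionsgrpeq}: the only candidate solutions of the group-equations are the functions of Theorem \ref{solutionsgrpeq}, and every one of them satisfies \eqref{grpeq03}, so none can also meet the equation carried by a nonzero $\zeta\in D$; thus the system of Lemma \ref{g-eq} is unsolvable as soon as $D\neq 0$. Consequently an $R$-matrix of the form $R=R_0\bar\Theta$ can exist only when $D=0$, that is $\Lambda'=\Cent^q(\Lambda_W)\cap\Lambda_R=\Lambda_R^{[\ell]}$, which is the claimed necessary choice. I expect the one subtle point to be bookkeeping rather than genuine difficulty: one must recognize that the obstruction is exactly the ``degenerate'' instance $a\in D$, which the abstract Definition of the diamond-equations drops as redundant but which is a bona fide equation in Lemma \ref{g-eq}, and one must keep the two distinct zeros apart, namely $\varphi_2(\zeta)=0$ in $\pi_1$ on the left and $\delta_{\zeta,0}=0$ in $\Lambda/\Lambda'$ on the right, whose incompatibility is precisely what excludes $D\neq 0$.
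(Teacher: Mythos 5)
Your proof is correct and takes essentially the same route as the paper: choose a nonzero $\zeta\in D=\Cent^q(\Lambda_W)\cap\Lambda_R/\Lambda'$, observe that $\varphi_1(\zeta)=\one$ and $\varphi_2(\zeta)=0$, so the corresponding constraints from Lemma \ref{g-eq} become group-equations with left-hand side $0$, directly contradicting \eqref{grpeq03}--\eqref{grpeq04}. Two minor remarks: your appeal to Theorem \ref{allsolutionsgrpeq} is superfluous (any solution of the group-equations satisfies \eqref{grpeq03} by definition, so no classification is needed), and your observation that the abstract Definition of the diamond-equations formally excludes $a\in B\cap C$ while Lemma \ref{g-eq} still imposes those equations is accurate --- the paper itself glosses over this point and simply calls the offending equations ``diamond-equations'' in its proof.
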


  \begin{proof}
    If $\Cent^q(\Lambda_W)\cap\Lambda_R/\Lambda'\neq 0$, then there exist a root
    $\zeta\in\Cent^q(\Lambda_W)$, not contained in the kernel $\Lambda'$. Thus,
    there are diamond-equations with $\varphi_1(\zeta)=\one$ and
    $\varphi_2(\zeta)=0$, i.e. the set of equations:
    \begin{align}
      0 &= \sum_{y_1+y_2=y} g(x,y_1)g(x,y_2),\\
      0 &= \sum_{x_1+x_2=x} g(x_1,y)g(x_2,y),\\
      0 &= \sum_{y\in G} g(0,y),\\
      0 &= \sum_{x\in G} g(x,0).
    \end{align}
    Since this are group-equations as in Definition \ref{GroupEquations}, but
    with left-hand side equal to $0$, solutions of the group-equations
    does not solve the diamond-equations in this situation.
  \end{proof}

  Before examining in which case a solution of the group-equations as in Theorem
  \ref{solutionsgrpeq} is also a solution of the diamond-equations
  \eqref{diaeq01}-\eqref{diaeq04}, we show that it is sufficient to check the
  diamond-equations \eqref{diaeq03} and \eqref{diaeq04}. 

  \begin{lem}\label{lem:onlyScalingEq}
    Let $G$ be an abelian group of order $N$, $H_1$, $H_2$ subgroups with
    $|H_1|=|H_2|=d$ and $\omega\colon H_1\times H_2\to \C^{\times}$ a
    group-pairing, such that $g\colon G\times G\to \C,~ (x,y)\mapsto
    1/d~\omega(x,y) \delta_{(x\in H_1)}\delta_{(y\in H_2)}$ is a solution of
    the group-equations \eqref{grpeq01}-\eqref{grpeq04}, as in Theorem
    \ref{solutionsgrpeq}. Then the following holds: \\
    If $g$ is a solution of the diamond-equations \eqref{diaeq01},
    \eqref{diaeq02}, then $g$ solves the diamond-equations \eqref{diaeq03},
    \eqref{diaeq04} as well.
  \end{lem}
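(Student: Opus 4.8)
The plan is to obtain the two \emph{scaling} equations \eqref{diaeq03} and \eqref{diaeq04} out of the two \emph{convolution} equations \eqref{diaeq01} and \eqref{diaeq02} by specializing one argument of $g$ and then projecting onto the relevant character. I will carry this out for the implication $\eqref{diaeq01}\Rightarrow\eqref{diaeq03}$; the implication $\eqref{diaeq02}\Rightarrow\eqref{diaeq04}$ is then obtained by the verbatim symmetric computation with the two arguments of $g$ interchanged.

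First I would specialize $x=0$ in \eqref{diaeq01}. Since $\omega$ is a group-pairing, $\omega(0,y)=1$, and hence the explicit shape of $g$ gives $g(0,y_1)=\frac1d\,\delta_{(y_1\in H_2)}$. Writing $y_2=y-y_1$, equation \eqref{diaeq01} collapses, for every $y\in G$, to
\[
0=\frac1d\sum_{y_1\in H_2}\varphi_1(a)(y_1)\,g(\varphi_2(a),y-y_1).
\]
The hard part is purely formal: equation \eqref{diaeq03} carries the \emph{inverse} character $\varphi_1(a)(y)^{-1}$, while the identity just obtained carries $\varphi_1(a)(y_1)$ with no inverse, so I cannot simply sum over $y$. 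The trick will be to multiply the displayed identity by $\varphi_1(a)(y)^{-1}$ \emph{before} summing over $y\in G$. After interchanging the two sums and substituting $y'=y-y_1$ in the inner one, the homomorphism property of $\varphi_1(a)$ factors $\varphi_1(a)(y)^{-1}=\varphi_1(a)(y')^{-1}\varphi_1(a)(y_1)^{-1}$, and the factor $\varphi_1(a)(y_1)^{-1}$ cancels the $\varphi_1(a)(y_1)$ already present. The remaining $y_1$-summation then only counts $|H_2|=d$, giving
\[
0=\frac{|H_2|}{d}\sum_{y'\in G}\varphi_1(a)(y')^{-1}g(\varphi_2(a),y')=\sum_{y'\in G}\varphi_1(a)(y')^{-1}g(\varphi_2(a),y'),
\]
which is exactly \eqref{diaeq03}.

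Finally, I would repeat this argument with the roles of the first and second arguments of $g$ swapped: specialize $y=0$ in \eqref{diaeq02}, use $g(x_1,0)=\frac1d\,\delta_{(x_1\in H_1)}$, then multiply by $\varphi_1(a)(x)^{-1}$ and sum over $x\in G$, using $|H_1|=d$, to obtain \eqref{diaeq04}. Note that the only ingredients entering are the values of $g$ on the axes $\{0\}\times G$ and $G\times\{0\}$, the equalities $|H_1|=|H_2|=d$, and the fact that $\varphi_1(a)$ is a character of $G$; the actual pairing $\omega$ plays no further role, which is why the statement holds for every $g$ of the given form.
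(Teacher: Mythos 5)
Your proof is correct, and it takes a genuinely different route from the paper's. The paper inserts the \emph{full} explicit form of $g$, pairing $\omega$ included, into \eqref{diaeq01} at an arbitrary point $(x,y)$, collapses the convolution using bimultiplicativity of $\omega$, and factors the left-hand side as the prefactor $\delta_{(x\in H_1)}\delta_{(y\in H_2)}\,\omega(x,y)\,\varphi_1(a)(y)$ times the $(x,y)$-independent quantity $\delta_{(\varphi_2(a)\in H_1)}\tfrac1{d^2}\sum_{y_2\in H_2} \varphi_1(a)(y_2)^{-1}\omega(\varphi_2(a),y_2)$; evaluating at $x=y=0$, where the prefactor equals $1$, forces that quantity to vanish, and its vanishing is precisely \eqref{diaeq03} for this $g$. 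You instead specialize only $x=0$ --- which uses nothing about $g$ beyond its axis values $g(0,\cdot)=\tfrac1d\,\delta_{(\cdot\,\in H_2)}$ --- and then perform a character projection: multiply by $\varphi_1(a)(y)^{-1}$, sum over $y\in G$, and let the homomorphism property of $\varphi_1(a)$ disentangle the convolution (note that the resulting factor $|H_2|/d$ only needs to be nonzero, so even $|H_2|=d$ is not essential). Your argument is leaner and slightly more general: it never touches the values of $g$ off the axes or the pairing $\omega$ itself, so it proves the implication for \emph{any} $g$ with that axis normalization. What the paper's heavier computation buys is an explicit closed formula for the left-hand side of \eqref{diaeq01} at every $(x,y)$, which shows that for $g$ of this specific form \eqref{diaeq01} is in fact \emph{equivalent} to \eqref{diaeq03}, and which exhibits the concrete vanishing criterion $\delta_{(\varphi_2(a)\in H_1)}\sum_{y\in H_2}\varphi_1(a)(y)^{-1}\omega(\varphi_2(a),y)=0$ in the form that guides the subsequent case-by-case computations (Lemma \ref{solutionsDiamondsCyclic} and the proof of Theorem \ref{Thm:solutions}).
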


  \begin{proof}
    Let $g$ be a solution of the group-equations as in Theorem
    \ref{solutionsgrpeq}. Assume that $g$ solves \eqref{diaeq01} and
    \eqref{diaeq02}. Let $\varphi_1$, $\varphi_2$ as in Definition
    \ref{def:diamond} and $0\neq \zeta\in A$ a non-trivial central weight.
    Then, for $x,y\in G$ we get by inserting $g$ in \eqref{diaeq01}
    \begin{align*}
      0 &= \sum_{y_1+y_2=y} \varphi_1(\zeta)(y_1) g(x,y_1)
           g(x+\varphi_2(\zeta),y_2) \\
           &= \sum_{y_1+y_2=y} \varphi_1(\zeta)(y_1) \frac1{d^2} 
           \omega(x,y_1) \omega(x+\varphi_2(\zeta),y_2) 
           \delta_{(x\in H_1)}\delta_{(y_1\in H_2)}
           \delta_{(x+\varphi_2(\zeta)\in H_1)}\delta_{(y_2\in H_2)} \\
           &= 
           \delta_{(x\in H_1)}\delta_{(y\in H_2)}
           \delta_{(\varphi_2(\zeta)\in H_1)}
            \frac1{d^2}  
            \sum_{\substack{y_1+y_2=y \\ y_1,y_2\in H_2}} 
            \varphi_1(\zeta)(y_1)
            \omega(x,y_1) \omega(x,y_2) \omega(\varphi_2(\zeta),y_2) \\
           &=\delta_{(x\in H_1)}\delta_{(y\in H_2)}
           \delta_{(\varphi_2(\zeta)\in H_1)}
            \frac1{d^2} \omega(x,y) 
            \sum_{\substack{y_1+y_2=y \\ y_1,y_2\in H_2}} 
            \varphi_1(\zeta)(y_1) \omega(\varphi_2(\zeta),y_2) \\
           &=\delta_{(x\in H_1)}\delta_{(y\in H_2)}
           \delta_{(\varphi_2(\zeta)\in H_1)}
            \frac1{d^2} \omega(x,y) 
           \sum_{y_2\in H_2} \varphi_1(\zeta)(y-y_2) 
           \omega(\varphi_2(\zeta),y_2) \\
           &= \delta_{(x\in H_1)}\delta_{(y\in H_2)} 
           \delta_{(\varphi_2(\zeta)\in H_1)} \frac1{d^2}  
              \omega(x,y) \varphi_1(\zeta)(y) 
              \sum_{y_2\in H_2} \varphi_1(\zeta)(y_2)^{-1} 
              \omega(\varphi_2(\zeta),y_2).
    \end{align*}
    In particular, this holds for $x=y=0$, and in this case the expression
    vanishes iff
    \begin{equation*}
      \delta_{(\varphi_2(\zeta)\in H_1)} \frac1{d^2}    
              \sum_{y\in H_2} \varphi_1(\zeta)(y)^{-1} 
              \omega(\varphi_2(\zeta),y)=0,
    \end{equation*}
    which is \eqref{diaeq03}. Analogously, it follows that if $g$ solves
    \eqref{diaeq02} it solves \eqref{diaeq04}.
  \end{proof}
                
  \subsection{Cyclic fundamental group $G=\Z_N$}
    In the following, $G$ will always be a fundamental group of a simple
    complex Lie algebra, hence either cyclic or equal to $\Z_2\times\Z_2$ for
    the case $D_{n}$, $n$ even. In this section, we will derive some results
    for the cyclic case.

    In Example \ref{cyclicSolutions} we have given solutions of the
    group-equations for $G=\Z_{N}$, i.e. for all $d\mid N$ the functions
    \begin{equation}
      g\colon G\times G\to \C,~ (x,y)\mapsto \frac 1d \xi^{\frac{xy}{(N/d)^2}}
        \,\delta_{(\frac Nd\mid x)}\delta_{(\frac Nd\mid y)}
    \end{equation}
    with $\xi$ a $d$-th root of unity, not necessarily primitive. In the
    following, we denote by $\xi_d$ the primitive $d$-th root of
    unity $\exp(2\pi i/d)$.

    \begin{lem}\label{solutionsDiamondsCyclic}
      Let $l\geq 2$, $m\in\N$ and $G=\car{\lambda}\cong\Z_N$. We consider the
      following diamonds
      $(G,A,B,C,D,\varphi_1,\varphi_2)$ 
      with $A=\car{a}\cong\Z_N$
      and injections $\varphi_1$ and $\varphi_2$ given by
      \begin{alignat*}{3}
        \tilde\varphi_1&\colon A\to G^*,&~ a&\mapsto (\xi_N^{m})^{(-)}, 
        \quad\text{with }
        (\xi_N^m)^{(-)}\colon G\to\C^{\times},~ x\mapsto \xi_N^{mx}, \\
        \tilde\varphi_2 &\colon A\to G,&~ a&\mapsto l \lambda,
      \end{alignat*}
      with primitive $N$-th root of unity $\xi_N$, $B=\ker\tilde\varphi_1$ and
      $C=\ker\tilde\varphi_2$ and $D=\{0\}$.

      Possible solutions of the group-equations
      \eqref{grpeq01}-\eqref{grpeq04} are given for any choice of integers
      $1\leq k\leq d$ and $d\mid N$ as in Example \ref{cyclicSolutions} by
      \begin{equation}\label{gCyclicSolution}
        g\colon G\times G\to \C,~ (x,y)\mapsto \frac 1d
        \left(\xi_d^k\right)^{\frac{xy}{(N/d)^2}}
          \,\delta_{(\frac Nd\mid x)}\delta_{(\frac Nd\mid y)},
      \end{equation}
      with primitive $d$-th root of unity $\xi_d=\exp(2\pi i/d)$. These are
      solutions also to the diamond-equations \eqref{diaeq01}-\eqref{diaeq04},
      iff $N\mid m,l$ or the following condition hold:
      \begin{equation}
        \gcd(N,dl,kl-\frac Nd m) = 1.
      \end{equation}
    \end{lem}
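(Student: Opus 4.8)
The plan is to reduce both of the relevant cases to the two \emph{scaling equations} \eqref{diaeq03} and \eqref{diaeq04}, and then to evaluate a single character sum for each non-trivial $\zeta\in A$. First I would exploit the computation carried out in the proof of Lemma \ref{lem:onlyScalingEq}: for a group-equation solution $g$ of the form \eqref{gCyclicSolution}, the left-hand side of \eqref{diaeq01} equals $\delta_{(x\in H_1)}\delta_{(y\in H_2)}\,\omega(x,y)\,\varphi_1(\zeta)(y)$ times $\tfrac1d$ times the left-hand side of \eqref{diaeq03}. Since the prefactor is a nonzero scalar precisely when $x\in H_1,\,y\in H_2$ and vanishes otherwise, \eqref{diaeq01} holds for all $x,y$ if and only if \eqref{diaeq03} holds, and symmetrically \eqref{diaeq02}$\,\Leftrightarrow\,$\eqref{diaeq04}. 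Hence it suffices to decide, for each $\zeta\in A\setminus\{0\}$, whether the scalars in \eqref{diaeq03} and \eqref{diaeq04} vanish.

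Next I would make the character sum explicit. Here $H_1=H_2=H=\car{\tfrac Nd\lambda}\cong\Z_d$ and, by \eqref{gCyclicSolution}, the pairing is $\omega\bigl(s\tfrac Nd\lambda,\,t\tfrac Nd\lambda\bigr)=\xi_d^{kst}$. Writing $\zeta=ja$ with $1\le j\le N-1$ gives $\varphi_2(\zeta)=jl\lambda$ and $\varphi_1(\zeta)(y)=\xi_N^{jmy}$. The indicator $\delta_{(\varphi_2(\zeta)\in H)}$ is nonzero exactly when $\tfrac Nd\mid jl$, i.e.\ $N\mid jld$; assuming this, write $\varphi_2(\zeta)=p\tfrac Nd\lambda$ with $p=\tfrac{jld}{N}$. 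Using $\xi_N^{N/d}=\xi_d$, the sum occurring in \eqref{diaeq03} becomes
\[
\sum_{y\in H}\varphi_1(\zeta)(y)^{-1}\,\omega(\varphi_2(\zeta),y)
=\sum_{s=0}^{d-1}\xi_d^{(kp-jm)s},
\]
a geometric sum equal to $d$ if $d\mid kp-jm$ and to $0$ otherwise. Multiplying the congruence $d\mid kp-jm$ by $\tfrac Nd$ and using $\tfrac Nd\,p=jl$ turns it into the equivalent condition $N\mid j\bigl(kl-\tfrac Nd m\bigr)$. Thus \eqref{diaeq03} \emph{fails} for $\zeta=ja$ precisely when both $N\mid jld$ and $N\mid j\bigl(kl-\tfrac Nd m\bigr)$ hold; the symmetric computation shows \eqref{diaeq04} fails under the identical condition, since $H_1=H_2$ and $\omega$ is symmetric.

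It then remains to aggregate over $j$. Setting $u=\gcd(N,ld)$ and $v=\gcd(N,kl-\tfrac Nd m)$, the two divisibilities read $\tfrac Nu\mid j$ and $\tfrac Nv\mid j$, so they hold simultaneously iff $\operatorname{lcm}\bigl(\tfrac Nu,\tfrac Nv\bigr)=\tfrac N{\gcd(u,v)}$ divides $j$. As $\gcd(u,v)=\gcd(N,ld,kl-\tfrac Nd m)$, a witness $j$ with $0<j<N$ exists iff this triple gcd exceeds $1$; equivalently, $g$ solves all diamond-equations iff $\gcd(N,dl,kl-\tfrac Nd m)=1$. Finally, the alternative clause $N\mid m,l$ corresponds to $\tilde\varphi_1$ and $\tilde\varphi_2$ being trivial, whence $B=C=A$ and the index set $A\setminus(B\cap C)$ of diamond-equations is empty, so $g$ is a solution vacuously; this accounts for the exceptional case not covered by the gcd condition.

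The only genuinely delicate step is this last passage from the two simultaneous congruences in $j$ to the single arithmetic criterion $\gcd(N,dl,kl-\tfrac Nd m)=1$. The remaining work—evaluating the geometric sum and translating $d\mid kp-jm$ into $N\mid j(kl-\tfrac Nd m)$—is routine. The places where errors are most likely to creep in are the bookkeeping of the factor $d$ inside $p=\tfrac{jld}{N}$ and the two elementary identities $\tfrac N{\gcd(N,X)}\mid j\Leftrightarrow N\mid jX$ and $\operatorname{lcm}(\tfrac Nu,\tfrac Nv)=\tfrac N{\gcd(u,v)}$, so I would state and verify these carefully before assembling the final equivalence.
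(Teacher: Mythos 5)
Your proposal is correct and follows essentially the same route as the paper's proof: reduction to the scaling equations \eqref{diaeq03}--\eqref{diaeq04} via Lemma \ref{lem:onlyScalingEq} (whose proof indeed establishes the equivalence with \eqref{diaeq01}--\eqref{diaeq02} that you invoke, since for $x=y=0$ the prefactor is $1/d\neq 0$), followed by evaluation of the same geometric sum and translation of its vanishing into the two divisibility conditions $N\mid jld$ and $N\mid j(kl-\tfrac Nd m)$, with the case $N\mid m,l$ handled vacuously exactly as in the paper. Your final aggregation over $j$ via $u=\gcd(N,ld)$, $v=\gcd(N,kl-\tfrac Nd m)$ and $\mathrm{lcm}\bigl(\tfrac Nu,\tfrac Nv\bigr)=\tfrac N{\gcd(u,v)}$ is simply a more explicit rendering of the paper's terse ``since this condition has to be fulfilled for all $z$'' step, so the two arguments coincide in substance.
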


    \begin{proof}
      For $N\mid m,l$ there is no non-trivial diamond-equation, hence all
      solutions of the group-equations as in Example \ref{cyclicSolutions} are
      possible. Assume now, that not both $N\mid m$ and $N\mid l$.  We insert
      the function $g$ from \eqref{gCyclicSolution} in the diamond-equations
      \eqref{diaeq01}-\eqref{diaeq04} and get requirements for $d,k,l$ and
      $N$. By Lemma \ref{lem:onlyScalingEq} it is sufficient to consider only
      equations \eqref{diaeq03} and \eqref{diaeq04}. Since for cyclic $G$ the
      function $g$ is symmetric we choose equation \eqref{diaeq03} for the
      calculation. In the following we omit the $\tilde{}$ on the maps
      $\tilde\varphi_{1/2}\colon A\to G^*$, resp. $G$.  Let $1\leq z<N$, $a\in
      A$ and $y\in G$, then
      \begin{align*}
        \sum_{y=1}^N 
        \left(\varphi_1(za)(y)\right)^{-1}
        g(\varphi_2(za), y) 
        &= \frac 1d \sum_{y=1}^N \xi_N^{-zm y}
        \left(\xi_d^k\right)^{\frac{zl y}{(N/d)^2}}
        \, \delta_{(\frac Nd \mid zl)}\delta_{(\frac Nd \mid y)} \\
        &= \frac 1d \sum_{y=1}^N \xi_d^{-\frac{z my}{N/d}} \left(
        \xi_d^{\frac{zkl}{(N/d)}} \right)^{\frac{y}{N/d}}
        \, \delta_{(\frac Nd \mid zl)}\delta_{(\frac Nd \mid y)}, \\
        &= \frac 1d \sum_{y'=1}^d \left(
        \xi_d^{-zm+\frac{zkl}{(N/d)}} \right)^{y'}
        \, \delta_{(\frac Nd \mid zl)},
      \end{align*}
      with the substitution $y'=y/(N/d)$. This sum equals $0$ iff $N/d\nmid
      zl$ or $d\nmid z(kl/(N/d)-m)$. This is equivalent to $N\nmid zdl$
      or $N\nmid z(kl-(N/d)m)$, hence $N\nmid \gcd(zdl,z(kl-(N/d)m))$. Since
      this condition has to be fulfilled for all $z$ we get that $N\nmid
      \gcd(dl, kl-(N/d) m)$, hence $\gcd(N,ld,kl-(N/d)m)=1$.
    \end{proof}

    We spell out the condition for explicit values $m$ and $l$.
    \begin{expl}\label{expl:DiamondSolutionsCyclci}
      Let $G=\Z_N=\car{\lambda}$, $l\geq 2$, $m\in\N$ and diamond
      $(G,A,B,C,D,\varphi_1,\varphi_2)$ as in Lemma
      \ref{solutionsDiamondsCyclic}. Depending on $m,l$ we get the following
      criteria for solutions of the diamond-equations. Here, we give
    $\varphi_1$ and $\varphi_2$ shortly by the generator of its image.
      \begin{enumerate}[(I)]
        \item If $N\mid m$ and $N\mid l$ we have the diamond
          $(\Z_N,\Z_N,\Z_N,\Z_N,\Z_1,$ $1,0)$ and all solutions of the form
          \eqref{gCyclicSolution} are also solutions to the diamond-equations
          \eqref{diaeq01}-\eqref{diaeq04}. (Since $B,C= A$, there are no
          non-trivial diamond-equations.)
        \item If $N\mid m$ and $N\nmid l$ we have the
          diamond
          $(\Z_N,\Z_N,\Z_N,\Z_{\gcd(l,N)},\Z_1,1,l\lambda)$.  In this case the
          function $g$ as in \eqref{gCyclicSolution} is a solution to the
          diamond-equations \eqref{diaeq01}-\eqref{diaeq04} if
          $\gcd(N,dl,kl)=1$.
        \item If $\gcd(m,N)=1$ and $N\nmid l$ we have the
          diamond
          $(\Z_N,\Z_N,\Z_1,\Z_{\gcd(l,N)},\Z_1,\xi_N,$
          $l\lambda)$.  In this case the function $g$ as in
          \eqref{gCyclicSolution} is a solution to the diamond-equations
          \eqref{diaeq01}-\eqref{diaeq04} if
          \begin{equation}\label{eq:gcd-condi}
            \gcd(N,dl,kl-\frac Nd m)=1.
          \end{equation}
          In most cases, $N$ is prime or equals $1$, hence we consider the
          two special cases
          \begin{enumerate}[(1)]
            \item If $d=1$, \eqref{eq:gcd-condi} simplifies to
              $\gcd(N,l,l-Nm)=1$, which is equivalent
              to $\gcd(N,l)=1$.
            \item If $d=N$, \eqref{eq:gcd-condi} simplifies to
              $\gcd(N,lN,kl-m)=1$, which is equivalent
              to $\gcd(N,kl-m)=1$.
          \end{enumerate}
      \end{enumerate}
    \end{expl}

    Finally, we consider the Lie algebras with cyclic fundamental group in
    question and determine the values $m$ and $l$ according to the Lie
    theoretic data and thereby the corresponding diamonds.

    \begin{expl}\label{expl:diamonds}
      Let $G=\Z_N$ be the fundamental group of a simple complex Lie algebra $\g$,
      generated by the fundamental dominant weight $\lambda_n$. Let
      $\ell\in\N$, $\ell>2$, $q=\exp(2\pi i/\ell)$, $\ell_{[n]}=
      \ell/\gcd(\ell,d_n)$, $m_{[n]}\df N(\lambda_n,\lambda_n)/\gcd(\ell,d_n)$
      and $(G,A,B,C,D,\varphi_1,\varphi_2)$ be a diamond as in Lemma
      \ref{diamonds}, such that the corresponding diamond-equations
      \eqref{diaeq01}-\eqref{diaeq04} have a solution that is also a solution
      to the group-equations \eqref{grpeq01}-\eqref{grpeq04}. Then, the
      diamond is 
       \begin{equation}\label{eq:possibleDiamond}
         (G,\Z_N,\Z_{\gcd(m_{[n]},N)},\Z_{\gcd(\ell_{[n]},N)},
         \Z_1,\varphi_1,\varphi_2),
       \end{equation}
      with injections
      \begin{alignat*}{3}
        \varphi_1&\colon A\to G^*,&~ \ell_{[n]}\lambda_n&\mapsto
        (\xi_N^{m_{[n]}})^{(-)}, 
        \quad\text{with }
        (\xi_N^{m_{[n]}})^{(-)}\colon G\to\C^{\times},~ x\mapsto \xi_N^{m_{[n]}x},
        \\
        \varphi_2 &\colon A\to G,&~ \ell_{[n]}\lambda_n&\mapsto \ell_{[n]}
        \lambda_n,
      \end{alignat*}
      with primitive $N$-th root of unity $\xi_N=\exp(2\pi i/N)$. The group
      $A=\Cent^q(\Lambda_R)/\Lambda'=\Lambda_W^{[\ell]}/\Lambda_R^{[\ell]}$ is
      generated by $\ell_{[n]}\lambda_n$ and
      $q^{(\ell_{[n]}\lambda_n,\lambda_n)}=
      (\xi_N^N)^{(\lambda_n,\lambda_n)/\gcd(\ell,d_n)}$.
      Since the order of $\xi_N^{m_{[n]}}$ in $\C^{\times}$ is
      $N/\gcd(m_{[n]},N)$ and the order of  $\ell_{[n]}$ in $\Z_N$ is
      $N/\gcd(\ell_{[n]},N)$, the injections $\varphi_1,\varphi_2$ determine
      the diamond \eqref{eq:possibleDiamond}.
      
    In the following table, we give the values $\ell_{[n]}$ and $m_{[n]}$ for
    all root systems of simple Lie algebras with cyclic fundamental group.
    \begin{equation*}\label{mN}
  \renewcommand{\arraystretch}{1.1}
      \begin{tabular}{|c||c|c|c|c|c|c|c|c|c|c|c|}
        \hline
        \Multi{2}{$\g$} 
        &\Multi{2}{$A_{n\geq1}$} &\Multi{2}{$B_{n\geq2}$}
        &\multicolumn{2}{c|}{\Multi{2}{$C_{n\geq3}$}} &$D_{n\geq5}$
        &\Multi{2}{$E_6$} &\Multi{2}{$E_7$} &\Multi{2}{$E_8$}
        &\Multi{2}{$F_4$} &\multicolumn{2}{c|}{\Multi{2}{$G_2$}} \\
        & & &\multicolumn{2}{c|}{} &$n$ odd & & & & &\multicolumn{2}{c|}{}
        \\\hline
    $\pi_1$ & $\Z_{n+1}$ &$\Z_2$ &\multicolumn{2}{c|}{$\Z_2$} &$\Z_4$
        &$\Z_3$ &$\Z_2$ &$\Z_1$ &$\Z_1$ &\multicolumn{2}{c|}{$\Z_1$}
        \\\hline
    $N$ & $n+1$ &$2$ &\multicolumn{2}{c|}{$2$} &$4$ &$3$ &$2$ &$1$
        &$1$ &\multicolumn{2}{c|}{$1$} \\\hline \hline
    $d_n$ & $1$ & $1$ & \multicolumn{2}{c|}{$2$} & $1$ & $1$ & $1$ &
        $1$ & $1$ & \multicolumn{2}{c|}{$3$} \\\hline
    $\ell$ & all & all & $2\nmid\ell$ & $2\mid\ell$ & all & all & all & all
           & all & $3\nmid\ell$ & $3\mid\ell$ \\\hline
    $\gcd(\ell,d_n)$ & $1$ & $1$ & $1$ & $2$ & $1$ & $1$ & $1$ & $1$ &
        $1$ & $1$ & $3$  \\\hline
    $(\lambda_n,\lambda_n)$ & $\frac n{n+1}$ & $\frac n2$ & 
        \multicolumn{2}{c|}{$n$} & $\frac n4$ & $\frac 43$ & $\frac
        32$ & $2$ & $1$ & \multicolumn{2}{c|}{6}  \\\hline\hline
    $\ell_{[n]}$ & $\ell$ & $\ell$ & $\ell$ & $\ell/2$ & $\ell$ & $\ell$ 
        & $\ell$ & $\ell$ & $\ell$ & $\ell$ & $\ell/3$ \\\hline
    $m_{[n]}$ & $n$ & $n$ & $2n$ & $n$
        & $n$ & $4$ & $3$ & $2$ & $1$ & $6$ & $2$ \\ \hline\hline
    cases & (III) & (I)-(III) & (II) & (I)-(III) & (III) & (III) & (III) & (I)
        & (I) & (I) & (I) \\\hline
      \end{tabular}
    \end{equation*}
    In the last row we indicate which cases in Example
    \ref{expl:DiamondSolutionsCyclci} apply. This will guide the proof of
    Theorem \ref{Thm:solutions}. Note that case (II) only appears for $B_n$,
    $n$ even and $\ell$ odd, and for $C_n$, even $n$ and $\ell\equiv2\mod4$ or
    odd $\ell$.
    \end{expl}

  \subsection{Example: $B_2$}
    For $\g$ with root system $B_2$ we have $\pi_1=\Z_2$.  There is one long
    root, $\alpha_1$, and one short root, $\alpha_2$, hence $d_1=2$ and
    $d_2=1$. The symmetrized Cartan matrix $\tilde C$ is given below. The
    fundamental dominant weights $\lambda_1,\lambda_2$ are given as in \cite{Hum72},
    Section 13.2. Here, $\lambda_1$ is a root and
    $\lambda_2$ is the generator of the fundamental group $\Z_2$. The
    matrix $\id_W^R$ gives the coefficients of the fundamental dominant
    weights in the basis $\{\alpha_1,\alpha_2\}$.

    \begin{equation*}
      \tilde C=
      \begin{pmatrix}
        4 & -2  \\
        -2&  2 
      \end{pmatrix}
      \qquad
      \id_W^R= 
      \begin{pmatrix}
        1 & \frac12  \\
        1 & 1 
      \end{pmatrix}
    \end{equation*}


    Thus, $(\lambda_2,\lambda_2)=1$.
    The lattice diamonds, depending on $\ell$, are:
    \begin{enumerate}[(i)]
      \item For odd $\ell$ we have $A=\ell\Lambda_W$ and $C=D=\ell\Lambda_R$.
        Since $(\lambda_2,\lambda_2)=1$, we have $B=\ell\Lambda_W$. (Since
        $(\lambda_n,\lambda_n)=n/2$, in the general case $B_n$, the group
        $\Cent^q(\Lambda_W)$ depends on $n$: for
        even $n$ we have $B=\ell\Lambda_W$, and $B=\ell\Lambda_R$ for odd
        $n$.)
      \item For even $\ell$ we have $A=C=B= \ell\car{\frac 12\lambda_1,
        \lambda_2}$ and $D=
        \ell\car{\frac12\alpha_1,\alpha_2}$.
        (Again, $B$ depends on $n$, hence we have
        $B=\car{\frac12\lambda_1,\ldots,\frac12\lambda_{n-1},\lambda_n}$ if $n$
        is even and
        $B=\car{\frac12\lambda_1,\ldots,\frac12\lambda_{n-1},2\lambda_n}$ if $n$
        is odd.)
      \end{enumerate}

    We calculate the quotient diamonds for kernel
    $\Lambda'=\Lambda_R^{[\ell]}$ since by the necessary criterion of Lemma
    \ref{necessCrit}, this is the only case where possible
    solutions exist. We calculate Lusztig's kernel $2\Lambda_R^{(\ell)}$ as
    well and compare it with $\Lambda_R^{[\ell]}$. 
    We then determine the solutions of the corresponding diamond-equations
    according to Example \ref{expl:DiamondSolutionsCyclci}.
    \begin{enumerate}[(i)]
      \item For odd $\ell$ it is $\Lambda_R^{[\ell]}=\ell\Lambda_R \neq
        2\ell\Lambda_R = 2\Lambda_R^{(\ell)}$, $\ell_{[n]}=\ell$ and
        $m_{[n]}=n=2=N$. Thus, for
        $\Lambda'=\Lambda_R^{[\ell]}$ the quotient diamond is given by
        $(\Z_2,\Z_2,\Z_2,\Z_1,\Z_1,1,\lambda_2)$. By Example
        \ref{expl:DiamondSolutionsCyclci} (II), one has to check for which
        $d,k$ it is $\gcd(2,d\ell,k\ell)=1$.  This gives the 2 solutions:
        $(d,k)=(1,1)$ and $(d,k)=(2,1)$.
      \item[(ii.a)] For $\ell\equiv 2\mod 4$ it is $\Lambda_R^{[\ell]}=
        \ell\car{\frac12 \alpha_1, \ldots \frac12 \alpha_{n-1}, \alpha_n}
        \neq\ell\Lambda_R = 2\Lambda_R^{(\ell)}$, $\ell_{[n]}=\ell$ and
        $m_{[n]}=N$ as above. Here, we have $\gcd(\ell,N)=N=2$, thus for
        $\Lambda'=\Lambda_R^{[\ell]}$ we get the quotient diamond
        $(\Z_2,\Z_2,\Z_2,\Z_2,\Z_1,1,0)$. Thus, all 3 solutions of the
        group-equations are solutions to the diamond-equations as well by
        \ref{expl:DiamondSolutionsCyclci} (I).
    \item[(ii.b)] For $\ell\equiv 0\mod 4$ it is $\Lambda_R^{[\ell]}=
      \ell\car{\frac12 \alpha_1, \ldots \frac12 \alpha_{n-1}, \alpha_n}
      =2\Lambda_R^{(\ell)}$. Thus in this case the quotient diamond as in
      (ii.a) is the same for Lusztig's kernel, namely
      $(\Z_2,\Z_2,\Z_2,\Z_2,\Z_1,1,$ $0)$ and again all 3 solutions of the
      group-equations are solutions to the diamond-equations as well.
    \end{enumerate}
    \begin{equation*}
    \begin{tikzpicture}[scale=.4]
      \node (A) at (0,2) {$\Z_2$};
      \node (B) at (-2,0) {$\Z_2$};
      \node (C) at (2,0) {$\Z_1$};
      \node (D) at (0,-2) {$\Z_1$};
      \draw (A) -- (B) -- (D) -- (C) -- (A);
      \node (phi2) at (3.5,1) {$\mapsto \lambda_2$};
      \node (phi1) at (-3.5,1) {$1\mapsfrom$};
      \node (kom) at (0,-4) {quotient diamond in case (i)};
    \end{tikzpicture}
    \qquad\qquad
    \begin{tikzpicture}[scale=.4]
      \node (A) at (0,2) {$\Z_2$};
      \node (B) at (-2,0) {$\Z_2$};
      \node (C) at (2,0) {$\Z_2$};
      \node (D) at (0,-2) {$\Z_1$};
      \draw (A) -- (B) -- (D) -- (C) -- (A);
      \node (phi2) at (3.5,1) {$\mapsto 0$};
      \node (phi1) at (-3.5,1) {$1\mapsfrom$};
      \node (kom) at (0,-4) {quotient diamond in cases (ii.a), (ii.b)};
    \end{tikzpicture}
    \end{equation*}

\section{Proof of Theorem A}\label{solutions}
    We treat the root systems case by case and determine the solutions of diamond
    equations in Section \ref{diamond-equations} which are of the
    form
      \begin{equation*}
        g\colon G\times G\to \C,~ (x,y)\mapsto 
          \frac 1d\omega(x,y)\delta_{(x\in H_1)}\delta_{(y\in H_2)}
      \end{equation*}
      with subgroups $H_1,H_2$ of $G=\pi_1$ as in Theorem
      \ref{solutionsgrpeq}. \\
    For this, we first determine the
    lattices
    $A=\Cent^q(\Lambda_R)=\Lambda_W^{[\ell]},~ 
    B=\Cent^q(\Lambda_W),~ C=\Cent^q(\Lambda_R)\cap\Lambda_R,~
    D=\Cent^q(\Lambda_W)\cap\Lambda_R=\Lambda_R^{[\ell]}$, depending on $\ell$.
    For the Lie algebras with cyclic fundamental group (all but for root
    system $D_n$ with even $n$), we then determine the values $m_{[n]}$ and
    $\ell_{[n]}$, depending on $\ell$, $n$ and the order of $\pi_1$, and
    thereby the quotient diamonds and which solutions of the group equations
    are solutions to the corresponding diamond equations. In these cases, the
    $\omega$-part of the solutions to the group equations are of the form
      \begin{equation*}
        \omega\colon H\times H\to \C^{\times},~ (x,y)\mapsto 
        \left(\xi_d^k\right)^{\frac{xy}{(N/d)^2}}
      \end{equation*}
    for subgroup $H=\frac Nd \Z_{N}$ of $\pi_1$ of order $d$. We give
    the solutions by pairs $(d,k)$, which we determine
    by applying Lemma \ref{solutionsDiamondsCyclic} and Example
    \ref{expl:DiamondSolutionsCyclci}. An overview of the possible cases
    gives Example \ref{expl:diamonds}. \\
    For $D_n$ with even $n$ and fundamental group $\Z_2\times\Z_2$ we also
    determine all quotient diamonds (depending on $\ell$) and check which
    solutions of the group equations solve the diamond equations in a rather
    case by case calculation.

    \begin{enumerate}[(1)]

      \item For $\g$ with root system $A_n$, $n\geq 1$, we have
        $\pi_1=\Z_{n+1}$ for all $n$. The simple roots are
        $\alpha_1,\ldots,\alpha_{n}$ and $d_i=1$ for $1\leq i\leq n$. The
        symmetrized Cartan matrix $\tilde C$ is given below. The fundamental
        dominant weights $\lambda_i$ are given as in \cite{Hum72}, Section
        13.2, and $\lambda_n$ is the generator of the fundamental group
        $\Z_{n+1}$. The matrix $\id_W^R$ gives the coefficients of the
        fundamental dominant weights in the basis
        $\{\alpha_1,\ldots,\alpha_n\}$.

        \begin{gather*}
          \tilde C=
          \begin{pmatrix}
            2 & -1 & 0 & .& .& .& .&0& \\
            -1&  2 &-1 & 0& .& .& .&0& \\
            0 & -1 & 2 &-1& 0& .& .&0& \\
            . &  . & . & .& .& .& .&.& \\
            0 &  0 & 0 & 0& .& .&-1& 2
          \end{pmatrix} \\
          \id_W^R=a_{ij} \text{ with }
          a_{ij} = \begin{cases}
            \frac1{n+1} i(n-j+1), & \text{if } i\leq j,\\
            \frac1{n+1} j(n-i+1), & \text{if } i> j.
          \end{cases}
        \end{gather*}

        The lattice diamonds, depending on $\ell$, are:
        \begin{enumerate}[(i)]
          \item For even $\ell$ we have $A=\ell\Lambda_W$, $B=D=\ell\Lambda_R$
            and $C=\ell/\gcd(n+1,\ell)\Lambda_W$.
          \item For odd $\ell$: the same lattices as in (i).
        \end{enumerate}

        We calculate the quotient diamonds for kernel
        $\Lambda'=\Lambda_R^{[\ell]}$ and compare it with Lusztig's kernel
        $2\Lambda_R^{(\ell)}$. 
        We then determine the solutions of the corresponding diamond-equations
        according to Example \ref{expl:DiamondSolutionsCyclci}.
        \begin{enumerate}[(i)]
          \item For odd $\ell$ it is $\Lambda_R^{[\ell]}=\ell\Lambda_R \neq
            2\ell\Lambda_R = 2\Lambda_R^{(\ell)}$, $\ell_{[n]}=\ell$ and
            $m_{[n]}=n$. Thus, the quotient diamond is given by
            $(\Z_{n+1},\Z_{n+1},\Z_1,\Z_{\gcd(\ell,n+1)},\Z_1,\xi_{n+1},
            \ell\lambda_n)$, hence we are in case (III) of Example
            \ref{expl:DiamondSolutionsCyclci}. We get solutions $(d,k)$ iff
            $\gcd(n+1,d\ell,k\ell-\frac{n+1}d n)=1$. 
          \item For odd $\ell$ it is $\Lambda_R^{[\ell]}=\ell\Lambda_R =
            2\Lambda_R^{(\ell)}$, $\ell_{[n]}=\ell$ and
            $m_{[n]}=n$. Thus, the quotient diamonds and solutions are as in
            (i). 
        \end{enumerate}

      \item For $\g$ with root system $B_n$, $n\geq 2$, we have $\pi_1=\Z_2$
        for all $n$. The long simple roots are $\alpha_1,\ldots,\alpha_{n-1}$
        and the short simple root $\alpha_n$, hence $d_i=2$ for $1\leq i\leq
        n-1$ and $d_n=1$. The symmetrized Cartan matrix $\tilde C$ is given
        below. The fundamental dominant weights $\lambda_i$ are given as in
        \cite{Hum72}, Section 13.2. Here, $\lambda_1,\ldots,\lambda_{n-1}$ are
        roots and $\lambda_n$ is the generator of the fundamental group
        $\Z_2$. The matrix $\id_W^R$ gives the coefficients of the fundamental
        dominant weights in the basis $\{\alpha_1,\ldots,\alpha_n\}$.

        \begin{equation*}
          \tilde C=
          \begin{pmatrix}
            4 & -2 & 0& 0& .& & & 0  \\
            -2&  4 &-2& 0& .& & & 0  \\
             .&   .& .& .& .&.&.&.&  \\
            0 &  0 & 0&  & .&-2&4&-2 \\
            0 &  0 & 0&  & .& 0&-2&2
          \end{pmatrix}
          \quad
          \id_W^R= 
          \begin{pmatrix}
            1 & 1 & . & . & 1 & \frac 12 \\
            1 & 2 & . & . & 2 & 1 \\
            . & . & . & . & . & . \\
            1 & 2 & 3 & . & n-1 & \frac {n-1}2 \\
            1 & 2 & 3 & . & n-1 & \frac n2
          \end{pmatrix}
        \end{equation*}

        The lattice diamonds, depending on $\ell$, are:
        \begin{enumerate}[(i)]
          \item For odd $\ell$ we have $A=\ell\Lambda_W$ and
            $C=D=\ell\Lambda_R$. Since $(\lambda_n,\lambda_n)=n/2$, the group
            $\Cent^q(\Lambda_W)$ depends on $n$. It is $B=\ell\Lambda_W$ for
            even $n$ and $B=\ell\Lambda_R$ for odd $n$.
          \item For even $\ell$ we have $A=C=
            \ell\car{\frac12\lambda_1,\ldots,\frac12\lambda_{n-1}, \lambda_n}$
            and $D=\ell\car{\frac12 \alpha_1,\dots,$
            $\frac12\alpha_{n-1},\alpha_n}$.  Again, $B$ depends on $n$, and
            we have
            $B=\ell\car{\frac12\lambda_1,\dots,\frac12\lambda_{n-1},\lambda_n}$
            for even $n$ and
            $B=\ell\car{\frac12\lambda_1,\dots,\frac12\lambda_{n-1},2\lambda_n}$
            for odd $n$.
        \end{enumerate}

        We calculate the quotient diamonds for kernel
        $\Lambda'=\Lambda_R^{[\ell]}$ and compare it with Lusztig's kernel
        $2\Lambda_R^{(\ell)}$. 
        We then determine the solutions of the corresponding diamond-equations
        according to Example \ref{expl:DiamondSolutionsCyclci}.
        \begin{enumerate}[(i)]
          \item For odd $\ell$ it is $\Lambda_R^{[\ell]}=\ell\Lambda_R \neq
            2\ell\Lambda_R = 2\Lambda_R^{(\ell)}$, $\ell_{[n]}=\ell$ and
            $m_{[n]}=n$. In this case, the quotient diamond is given by either
            $(\Z_2,\Z_2,\Z_2,\Z_1,\Z_1,1,\lambda_n)$ for even $n$, or by
            $(\Z_2,\Z_2,\Z_1,\Z_1,\Z_1,-1,\lambda_n)$ for odd $n$. Thus we
            are either in case (II), or in case (III) of Example
            \ref{expl:DiamondSolutionsCyclci}. In the first case (even $n$) we
            get solutions by $(d,k)=(1,1)$ and $(2,1)$. For odd $n$ we get
            solutions $(d,k)=(1,1)$ and $(2,2)$. 
          \item[(ii.a)] For $\ell\equiv 2\mod 4$ it is $\Lambda_R^{[\ell]}=
            \ell\car{\frac12 \alpha_1, \ldots ,\frac12 \alpha_{n-1}, \alpha_n}
            \neq\ell\Lambda_R = 2\Lambda_R^{(\ell)}$, $\ell_{[n]}=\ell$ and
            $m_{[n]}=n$. The quotient diamond is given by either
            $(\Z_2,\Z_2,\Z_2,\Z_2,\Z_1,1,0)$ for even $n$, or by
            $(\Z_2,\Z_2,\Z_1,\Z_2,\Z_1,-1,0)$ for odd $n$. Thus we are in
            either in case (I) or in case (III) of Example
            \ref{expl:DiamondSolutionsCyclci}. In the first case (even $n$) we
            get all possible 3 solutions $(d,k)=(1,1)$, $(2,1)$ and $(2,1)$.
            For odd $n$ we get solutions $(d,k)=(2,1)$ and $(2,2)$.
         \item[(ii.b)] For $\ell\equiv 0\mod 4$ it is $\Lambda_R^{[\ell]}=
           \ell\car{\frac12 \alpha_1, \ldots \frac12 \alpha_{n-1}, \alpha_n}
           =2\Lambda_R^{(\ell)}$, $\ell_{[n]}=\ell$ and $m_{[n]}=n$. Thus the
           quotient diamonds and solutions are as in (ii).
        \end{enumerate}

      \item For $\g$ with root system $C_n$, $n\geq 3$, we have $\pi_1=\Z_2$
        for all $n$. The short simple roots are $\alpha_1,\ldots,\alpha_{n-1}$
        and the long simple root $\alpha_n$, hence $d_i=1$ for $1\leq i\leq
        n-1$ and $d_n=2$. The symmetrized Cartan matrix $\tilde C$ is given
        below. The fundamental dominant weights $\lambda_i$ are given as in
        \cite{Hum72}, Section 13.2, and $\lambda_n$ is the generator of the
        fundamental group $\Z_2$. The matrix $\id_W^R$ gives the coefficients
        of the fundamental dominant weights in the basis
        $\{\alpha_1,\ldots,\alpha_n\}$.

        \begin{equation*}
          \tilde C=
          \begin{pmatrix}
            2 & -1 & 0& 0& .& & & 0  \\
            -1&  2 &-1& 0& .& & & 0  \\
             .&   .& .& .& .&.&.&.&  \\
            0 &  0 & 0&  & .&-1&2&-2 \\
            0 &  0 & 0&  & .& 0&-2&4
          \end{pmatrix}
          \quad
          \id_W^R= 
          \begin{pmatrix}
            1 & 1 & . & . & 1 & 1 \\
            1 & 2 & . & . & 2 & 2 \\
            . & . & . & . & . & . \\
            1 & 2 & . & . & n-1 & n-1 \\
      \frac12 & 1 & . & . & \frac{n-1}2 & \frac n2
          \end{pmatrix}
        \end{equation*}

        The lattice diamonds, depending on $\ell$, are:
        \begin{enumerate}[(i)]
          \item For odd $\ell$ we have $A=B=\ell\Lambda_W$ and
            $C=D=\ell\Lambda_R$.
          \item For $\ell\equiv 2 \mod 4$ we have $A=
            \ell\car{\lambda_1,\ldots,\lambda_{n-1}, \frac 12\lambda_n}$ and
            $C=D=\ell\Lambda_W$. Since $(\lambda_n,\lambda_n)=n$,
            $B=\Cent^q(\Lambda_W)$ depends on $n$. For odd $n$ it equals
            $\ell\Lambda_W$ and for even $n$ it is equal to $A$.
          \item For $\ell\equiv 0 \mod 4$ we have $A=C=
            \ell\car{\lambda_1,\ldots,\lambda_{n-1},\frac 12\lambda_n}$ and
            $D=\ell\Lambda_W$. Here again, $B=\Cent^q(\Lambda_W)$ depends on
            $n$. For odd $n$ it equals $\ell\Lambda_W$ and for even $n$ it is
            equal to $A$.
        \end{enumerate}

        We calculate the quotient diamonds for kernel
        $\Lambda'=\Lambda_R^{[\ell]}$ and compare it with Lusztig's kernel
        $2\Lambda_R^{(\ell)}$. 
        We then determine the solutions of the corresponding diamond-equations
        according to Example \ref{expl:DiamondSolutionsCyclci}.
        \begin{enumerate}[(i)]
          \item For odd $\ell$ it is $\Lambda_R^{[\ell]}=\ell\Lambda_R \neq
            2\ell\Lambda_R = 2\Lambda_R^{(\ell)}$, $\ell_{[n]}=\ell$ and
            $m_{[n]}=2n$. In this case, the quotient diamond is given by
            $(\Z_2,\Z_2,\Z_2,\Z_1,\Z_1,1,\lambda_n)$. Thus we are in case
            (II) of Example \ref{expl:DiamondSolutionsCyclci}, hence the 2
            solutions are given by $(d,k)=(1,1)$ and $(2,1)$.
          \item[(ii)] For $\ell\equiv 2\mod 4$ it is $\Lambda_R^{[\ell]}=
            \ell\car{\alpha_1, \ldots ,\alpha_{n-1},\frac12  \alpha_n}
            \neq\ell\Lambda_R = 2\Lambda_R^{(\ell)}$, $\ell_{[n]}=\ell/2$ and
            $m_{[n]}=n$. The quotient diamond is given by either
            $(\Z_2,\Z_2,\Z_2,\Z_1,\Z_1,1,\lambda_n)$ for even $n$, or by
            $(\Z_2,\Z_2,\Z_1,\Z_1,\Z_1,-1,\lambda_n)$ for odd $n$. Thus
            we are in either in case (II) or in case (III) of Example
            \ref{expl:DiamondSolutionsCyclci}. In the first case (even $n$) we
            get solutions $(d,k)=(1,1)$ and $(2,1)$. For odd $n$ we get
            solutions $(d,k)=(1,1)$ and $(2,2)$.
         \item[(ii)] For $\ell\equiv 0\mod 4$ it is $\Lambda_R^{[\ell]}=
           \ell\car{\frac12 \alpha_1, \ldots \frac12 \alpha_{n-1}, \alpha_n}
           =2\Lambda_R^{(\ell)}$, $\ell_{[n]}=\ell/2$ and $m_{[n]}=n$. The
           quotient diamond is given by either
           $(\Z_2,\Z_2,\Z_2,\Z_2,\Z_1,1,0)$ for even $n$, or by
           $(\Z_2,\Z_2,\Z_1,\Z_2,\Z_1,-1,0)$ for odd $n$. Thus we
           are in either in case (I) or in case (III) of Example
           \ref{expl:DiamondSolutionsCyclci}. In the first case (even $n$) we
           get all 3 possible solutions $(d,k)=(1,1)$, $(2,1)$ and $(2,2)$. For
           odd $n$ we get solutions $(d,k)=(2,1)$ and $(2,2)$.  
        \end{enumerate}

      \item \label{D2k} For $\g$ with root system $D_n$, $n\geq 4$ even, we
        have $\pi_1=\Z_2\times \Z_2$ for all $n$. The simple roots are
        $\alpha_1,\ldots,\alpha_{n}$ and $d_i=1$ for $1\leq i\leq n$. The
        symmetrized Cartan matrix $\tilde C$ is given below. The fundamental
        dominant weights $\lambda_i$ are given as in \cite{Hum72}, Section
        13.2, and $\lambda_{n-1},\lambda_n$ are the generators of the
        fundamental group $\Z_2\times\Z_2$ and $\lambda_{n-1}+\lambda_n$ is
        the other element of order $2$. The matrix $\id_W^R$ gives the coefficients
        of the fundamental dominant weights in the basis
        $\{\alpha_1,\ldots,\alpha_n\}$, and since $d_i=1$ for all $i$, also
        the values $(\lambda_i,\lambda_j)$ for $1\leq i,j\leq n$.

        \begin{equation*}
          \tilde C=
          \begin{pmatrix}
            2 & -1 & 0& 0& .&  &  &   & 0  \\
            -1&  2 &-1& 0& .&  & .& . & .  \\
             .&  . & .& .& .& .& .& . & .  \\
             .&   .& .& .& .& 2&-1& 0 & 0  \\ 
             .&   .& .& .& .&-1& 2&-1 &-1  \\
            0 &  0 & 0&  & .& 0&-1& 2 &0 \\
            0 &  0 & 0&  & .& 0&-1& 0 &2
          \end{pmatrix}
          \quad
          \id_W^R= 
          \begin{pmatrix}
            1 & 1 & 1 & . & 1           & \frac12     & \frac12 \\
            1 & 2 & 2 & . & 2           & 1           & 1 \\
            1 & 2 & 3 & . & 3           & \frac32     & \frac32 \\
            . & . & . & . & .           & .           & . \\
            1 & 2 & 3 & . & n-2         & \frac{n-2}2 & \frac{n-2}2 \\
      \frac12 & 1 &\frac32 & . & \frac{n-2}2 & \frac n4    & \frac{n-2}4 \\
      \frac12 & 1 &\frac32 & . & \frac{n-2}2 & \frac{n-2}4 & \frac n4
          \end{pmatrix}
        \end{equation*}

        The lattice diamonds, depending on $\ell$, are:
        \begin{enumerate}[(i)]
          \item For odd $\ell$ we have $A=\ell\Lambda_W$ and
            $B=C=D=\ell\Lambda_R$.
          \item For even $\ell$ we have $A=C=\ell\Lambda_W$ and
            $B=D=\ell\Lambda_R$. 
        \end{enumerate}

        We calculate the quotient diamonds for kernel
        $\Lambda'=\Lambda_R^{[\ell]}$ and compare it with Lusztig's kernel
        $2\Lambda_R^{(\ell)}$. 
        We then determine the solutions of the corresponding diamond-equations
        by a case by case calculation.
        \begin{enumerate}[(i)]
          \item For odd $\ell$ it is $\Lambda_R^{[\ell]}=\ell\Lambda_R \neq
            2\ell\Lambda_R = 2\Lambda_R^{(\ell)}$. Thus, the quotient diamond
            is given by
            $(\Z_2\times\Z_2,\Z_2\times\Z_2,\Z_1,\Z_1,\Z_1,\varphi_1,\varphi_2)$
            with injections 
            \begin{align*}
              \varphi_1 &\colon \ell\car{\lambda_{n-1},\lambda_n} \to \pi_1^*, ~
              \ell\lambda_{n-1}\mapsto q^{\ell(\lambda_{n-1},-)},
              ~\ell\lambda_{n}\mapsto q^{\ell(\lambda_{n},-)}, \\
              \varphi_2 &\colon \ell\car{\lambda_{n-1},\lambda_n} \to \pi_1, ~
              \ell\lambda_{n-1}\mapsto \lambda_{n-1},~ \ell\lambda_{n}\mapsto
              \lambda_{n}.
            \end{align*}
            In the following, we will write $a\df\lambda_{n-1}$, $b\df\lambda_n$
            and $c\df\lambda_{n-1}+\lambda_n$ for the 3 elements of order 2 of
            $\pi_1$. Since $(\lambda_j,\lambda_j)=n/4$ for $j\in\{n-1,n\}$, and
            $(\lambda_i,\lambda_j)=(n-2)/4$ for $i\neq j$, $i,j\in\{n-1,n\}$
            we get\\

            \begin{center}
            \begin{tabular}{c||c|c|c|c||c}
              $\varphi_1$ & $0$ & $a$ & $b$ & $c$
              & $n$ \\\hline\hline
              \Multi{2}{$0$} &\Multi{2}{$1$} &\Multi{2}{$1$} &\Multi{2}{$1$}
              &\Multi{2}{$1$} & $n\equiv0\mod 4$\\\cline{6-6}
              &&&&& $n\equiv2\mod 4$\\\hline

              \Multi{2}{$a$} &\Multi{2}{$1$} &1 &-1 &-1
              &$n\equiv0\mod4$ \\\cline{3-6}
              &&-1 &1 &-1 &$n\equiv2\mod 4$\\\hline

              \Multi{2}{$b$} &\Multi{2}{$1$} &-1 &1 &-1
              &$n\equiv0\mod4$ \\\cline{3-6}
              &&1 &-1 &-1 &$n\equiv2\mod 4$\\\hline

              \Multi{2}{$c$} &\Multi{2}{$1$}
              &\Multi{2}{$-1$} &\Multi{2}{$-1$} &\Multi{2}{$1$}
              & $n\equiv0\mod4$ \\\cline{6-6}
              &&&&& $n\equiv2\mod 4$\\\hline
            \end{tabular}
            \end{center}
            ~\\

            Since it suffices to consider the diamond equations
            \eqref{diaeq03} and \eqref{diaeq04} by Lemma
            \ref{lem:onlyScalingEq}, we check which function
            \begin{equation*}
              g\colon G\times G\to \C, ~ (x,y) \mapsto \frac 1d 
              \, \omega(x,y)\delta_{(x\in H_1)}\delta_{(y\in H_2)}
            \end{equation*}
            with subgroups $H_i$ of $G=\Z_2\times\Z_2$ of order $d$ and a
            pairing $\omega$ as in Example \ref{2x2Solutions} is a solution to
            these equations.
            We get the following system of equations for $g$:
            \begin{align}\label{eqn}
              \begin{split}
              1 &= g(0,0) + g(a,0) + g(b,0) + g(c,0) \\
              1 &= g(0,0) + g(0,a) + g(0,b) + g(0,c) \\
              0 &= g(0,a) \pm g(a,a) \mp g(b,a) - g(c,a) \\
              0 &= g(a,0) \pm g(a,a) \mp g(a,b) - g(a,c) \\
              0 &= g(0,b) \mp g(a,b) \pm g(b,b) - g(c,b) \\
              0 &= g(b,0) \mp g(b,a) \pm g(b,b) - g(b,c) \\
              0 &= g(0,c) - g(a,c) - g(b,c) + g(c,c) \\
              0 &= g(c,0) - g(c,a) - g(c,b) + g(c,c) 
              \end{split}
            \end{align}
            where the $\pm,\mp$ possibilities depend on wether $\ell\equiv 0$
            or $2\mod 4$. It is easy to see that the trivial solution on
            $H_1=H_2=\Z_1$ is a solution. For $H_i\cong\Z_2$ the solution has
            one of the following two structures.  For symmetric solutions
            $H_1=H_2=\car{\lambda}$ we get $\omega(\lambda,\lambda)=-1$. If
            $H_1=\car{\lambda}\neq\car{\lambda'}=H_2$ we get
            $\omega(\lambda,\lambda')=1$. This give all possible 9 solutions
            with $H_i\cong\Z_2$. Finally, we check which functions on
            $G=\Z_2\times\Z_2$ are solutions to the diamond equations. 
            We get 4 symmetric solutions and 2 non-symmetric solutions, which
            are given by their values
            $(\omega(x,y))_{x,y\in\{\lambda_{n-1},\lambda_n\}}$ on generator pairs:
            \begin{equation*}
              \qquad
              \begin{pmatrix} 1 & 1 \\ 1 & 1 \end{pmatrix},~ 
              \begin{pmatrix}-1 &-1 \\-1 &-1 \end{pmatrix},~ 
              \begin{pmatrix} 1 & 1 \\ 1 &-1 \end{pmatrix},~ 
              \begin{pmatrix}-1 & 1 \\ 1 & 1 \end{pmatrix},~ 
              \begin{pmatrix}-1 & 1 \\-1 &-1 \end{pmatrix},~ 
              \begin{pmatrix}-1 &-1 \\ 1 &-1 \end{pmatrix}.
            \end{equation*}

          \item[(ii)] For even $\ell$ it is
            $\Lambda_R^{[\ell]}=\ell\Lambda_R =2\Lambda_R^{(\ell)}$. Thus the
            quotient diamond is given by
            $(\Z_2\times\Z_2,\Z_2\times\Z_2,\Z_1,\Z_2\times\Z_2,\Z_1,\varphi_1,0)$
            and the injection $\varphi_2$ is trivial. We get an analogue block
            of equations as \eqref{eqn}, but without non-zero ``shift''
            $\varphi_2(x)$, $x\in A$. We can add appropriate equations and
            get the $1=4g(0,0)$, hence only pairings of $H_1=H_2=\pi_1$ are
            solutions. It is now easy to check, that all 16 possible
            parings on $\pi_1\times \pi_1$ are solutions to the diamond
            equations.
        \end{enumerate}

      \item For $\g$ with root system $D_n$, $n\geq 5$ odd, we have
        $\pi_1=\Z_4$ for all $n$. The root and weight data are as for even $n$
        in \eqref{D2k}. The weight $\lambda_{n}$ is the generator of the
        fundamental group $\Z_2$.

        The lattice diamonds, depending on $\ell$, are:
        \begin{enumerate}[(i)]
          \item For odd $\ell$ we have $A=\ell\Lambda_W$ and
            $B=C=D=\ell\Lambda_R$.
          \item For $\ell\equiv 2 \mod 4$ we have $A=\ell\Lambda_W$, $C=
            \ell\car{\lambda_1,\ldots,\lambda_{n-2}, 2\lambda_{n-1},
            2\lambda_n}$ and $B=D=\ell\Lambda_R$. 
          \item For $\ell\equiv 0 \mod 4$ we have $A=C=\ell\Lambda_W$ and
            $B=D=\ell\Lambda_R$. 
        \end{enumerate}

        We calculate the quotient diamonds for kernel
        $\Lambda'=\Lambda_R^{[\ell]}$ and compare it with Lusztig's kernel
        $2\Lambda_R^{(\ell)}$. 
        We then determine the solutions of the corresponding diamond-equations
        according to Example \ref{expl:DiamondSolutionsCyclci}.
        \begin{enumerate}[(i)]
          \item For odd $\ell$ it is $\Lambda_R^{[\ell]}=\ell\Lambda_R \neq
            2\ell\Lambda_R = 2\Lambda_R^{(\ell)}$, $\ell_{[n]}=\ell$ and
            $m_{[n]}=n$. Thus, the quotient diamond is given by
            $(\Z_4,\Z_4,\Z_1,\Z_1,\Z_1,\xi_4, \lambda_n)$, hence we are in
            case (III) of Example \ref{expl:DiamondSolutionsCyclci}. We get
            solutions $(d,k)=(1,1)$, $(2,1)$, $(4,2)$ and $(4,4)$.
          \item For $\ell\equiv2\mod 4$ it is
            $\Lambda_R^{[\ell]}=\ell\Lambda_R= 2\Lambda_R^{(\ell)}$,
            $\ell_{[n]}=\ell$ and $m_{[n]}=n$. Thus, the quotient diamond is
            given by $(\Z_4,\Z_4,\Z_1,\Z_2,\Z_1,\xi_4, 2\lambda_n)$, hence we
            are in case (III) of Example \ref{expl:DiamondSolutionsCyclci}. We
            get all 4 solutions $(d,k)=(4,1)$, $(4,2)$, $(4,3)$ and $(4,4)$ on
            $H=\Z_4$. 
          \item For $\ell\equiv0\mod 4$ it is
            $\Lambda_R^{[\ell]}=\ell\Lambda_R= 2\Lambda_R^{(\ell)}$,
            $\ell_{[n]}=\ell$ and $m_{[n]}=n$. Thus, the quotient diamond is
            given by $(\Z_4,\Z_4,\Z_1,\Z_4,\Z_1,\xi_4, 0)$, hence we are in
            case (III) of Example \ref{expl:DiamondSolutionsCyclci}. We get
            the same 4 solutions as in (ii).
        \end{enumerate}

      \item For $\g$ with root system $E_6$, we have
        $\pi_1=\Z_3$. The simple roots are $\alpha_1,\ldots,\alpha_{6}$
        and $d_i=1$ for $1\leq i\leq 6$. The symmetrized Cartan matrix $\tilde
        C$ is given below. The fundamental dominant weights $\lambda_i$ are
        given as in \cite{Hum72}, Section 13.2, and $\lambda_6$ is the
        generator of the fundamental group $\Z_3$. The matrix $\id_W^R$ gives
        the coefficients of the fundamental dominant weights in the basis
        $\{\alpha_1,\ldots,\alpha_6\}$, and since $d_i=1$ for all $i$, also the
        values $(\lambda_i,\lambda_j)$ for $1\leq i,j\leq 6$.

        \begin{equation*}
          \tilde C=
          \begin{pmatrix}
            2& 0&-1& 0& 0& 0  \\
            0& 2& 0&-1& 0& 0  \\
           -1& 0& 2&-1& 0& 0  \\
            0&-1&-1& 2&-1& 0  \\ 
            0& 0& 0&-1& 2&-1  \\
            0& 0& 0& 0&-1& 2
          \end{pmatrix}
          \qquad
          \id_W^R= 
          \begin{pmatrix}
            \frac43 &1 &\frac53    &2 &\frac43    &\frac23 \\  
            1       &2 &2          &3 &2          &1       \\ 
            \frac53 &2 &\frac{10}3 &4 &\frac83    &\frac43 \\
            2       &3 &4          &6 &4          &2       \\ 
            \frac43 &2 &\frac83    &4 &\frac{10}3 &\frac53 \\
            \frac23 &1 &\frac43    &2 &\frac53    &\frac43
          \end{pmatrix}
        \end{equation*}

        The lattice diamonds, depending on $\ell$, are:
        \begin{enumerate}[(i)]
          \item For $3\nmid\ell$ we have $A=\ell\Lambda_W$ and
            $B=C=D=\ell\Lambda_R$.
          \item For $3\mid\ell$ we have $A=C=\ell\Lambda_W$ and
            $B=D=\ell\Lambda_R$.
        \end{enumerate}

        We calculate the quotient diamonds for kernel
        $\Lambda'=\Lambda_R^{[\ell]}$ and compare it with Lusztig's kernel
        $2\Lambda_R^{(\ell)}$. 
        We then determine the solutions of the corresponding diamond-equations
        according to Example \ref{expl:DiamondSolutionsCyclci}.
        \begin{enumerate}[(i)]
          \item[(i.a)] For $3\nmid \ell$ and $\ell$ odd it is
            $\Lambda_R^{[\ell]}=\ell\Lambda_R \neq 2\ell\Lambda_R =
            2\Lambda_R^{(\ell)}$, $\ell_{[n]}=\ell$ and $m_{[n]}=4$. Thus, the
            quotient diamond is given by
            $(\Z_3,\Z_3,\Z_1,\Z_1,\Z_1,\xi_3, \lambda_6)$, hence we are in
            case (III) of Example \ref{expl:DiamondSolutionsCyclci}. Since
            $\ell\equiv 2\mod 3$ we get solutions $(d,k)=(1,1)$, $(3,1)$ and
            $(3,3)$.
          \item[(i.b)] For $3\nmid \ell$ and $\ell$ even it is
            $\Lambda_R^{[\ell]}=\ell\Lambda_R = 2\Lambda_R^{(\ell)}$,
            $\ell_{[n]}=\ell$ and $m_{[n]}=4$. Thus, the quotient diamond is
            given by $(\Z_3,\Z_3,\Z_1,\Z_1,\Z_1,\xi_3, \lambda_6)$, and we are
            again in case (III) of Example \ref{expl:DiamondSolutionsCyclci}.
            Since $\ell\equiv 1\mod 3$ we get solutions $(d,k)=(1,1)$, $(3,2)$
            and $(3,3)$.
          \item[(ii.a)] For $3\mid \ell$ and $\ell$ odd it is
            $\Lambda_R^{[\ell]}=\ell\Lambda_R \neq 2\ell\Lambda_R =
            2\Lambda_R^{(\ell)}$, $\ell_{[n]}=\ell$ and $m_{[n]}=4$. Thus, the
            quotient diamond is given by
            $(\Z_3,\Z_3,\Z_1,\Z_3,\Z_1,\xi_3,0)$, hence we are in
            case (III) of Example \ref{expl:DiamondSolutionsCyclci}. We get
            all 3 solutions $(d,k)=(3,1)$, $(3,2)$ and $(3,3)$ on $\Z_3$.
          \item[(ii.b)] For $3\mid \ell$ and $\ell$ even it is
            $\Lambda_R^{[\ell]}=\ell\Lambda_R = 2\Lambda_R^{(\ell)}$,
            $\ell_{[n]}=\ell$ and $m_{[n]}=4$. Thus, the quotient diamond is
            given by $(\Z_3,\Z_3,\Z_1,\Z_3,\Z_1,\xi_3,0)$, and we the same
            solutions as in (ii.a).
        \end{enumerate}

      \item For $\g$ with root system $E_7$, we have
        $\pi_1=\Z_2$. The simple roots are $\alpha_1,\ldots,\alpha_{7}$
        and $d_i=1$ for $1\leq i\leq 7$. The symmetrized Cartan matrix $\tilde
        C$ is given below. The fundamental dominant weights $\lambda_i$ are
        given as in \cite{Hum72}, Section 13.2, and $\lambda_7$ is the
        generator of the fundamental group $\Z_2$. The matrix $\id_W^R$ gives
        the coefficients of the fundamental dominant weights in the basis
        $\{\alpha_1,\ldots,\alpha_7\}$, and since $d_i=1$ for all $i$, also the
        values $(\lambda_i,\lambda_j)$ for $1\leq i,j\leq 7$.

        \begin{equation*}
          \tilde C=
          \begin{pmatrix}
            2& 0&-1& 0& 0& 0 & 0 \\
            0& 2& 0&-1& 0& 0 & 0 \\
           -1& 0& 2&-1& 0& 0 & 0 \\
            0&-1&-1& 2&-1& 0 & 0 \\ 
            0& 0& 0&-1& 2&-1 & 0 \\
            0& 0& 0& 0&-1& 2 &-1 \\
            0& 0& 0& 0& 0&-1 & 2
          \end{pmatrix}
          \qquad
          \id_W^R= 
          \begin{pmatrix}
            2 &2       &3 &4 &3          &2 &1 \\
            2 &\frac72 &4 &6 &\frac92    &3 &\frac32 \\
            3 &4       &6 &8 &6          &4 &2 \\
            4 &6       &8 &12&9          &6 &3 \\
            3 &\frac92 &6 &9 &\frac{15}2 &5 &\frac52 \\
            2 &3       &4 &6 &5          &4 &2 \\
            1 &\frac32 &2 &3 &\frac52    &2 &\frac32 \\
          \end{pmatrix}
        \end{equation*}

        The lattice diamonds, depending on $\ell$, are:
        \begin{enumerate}[(i)]
          \item For odd $\ell$ we have $A=\ell\Lambda_W$ and
            $B=C=D=\ell\Lambda_R$.
          \item For even $\ell$ we have $A=C=\ell\Lambda_W$ and
            $B=D=\ell\Lambda_R$.
        \end{enumerate}

        We calculate the quotient diamonds for kernel
        $\Lambda'=\Lambda_R^{[\ell]}$ and compare it with Lusztig's kernel
        $2\Lambda_R^{(\ell)}$. 
        We then determine the solutions of the corresponding diamond-equations
        according to Example \ref{expl:DiamondSolutionsCyclci}.
        \begin{enumerate}[(i)]
          \item For $\ell$ odd it is $\Lambda_R^{[\ell]}=\ell\Lambda_R \neq
            2\ell\Lambda_R = 2\Lambda_R^{(\ell)}$, $\ell_{[n]}=\ell$ and
            $m_{[n]}=3$. Thus, the quotient diamond is given by
            $(\Z_2,\Z_2,\Z_1,\Z_1,\Z_1,\xi_2, \lambda_7)$ and we are in
            case (III) of Example \ref{expl:DiamondSolutionsCyclci}. We get
            solutions $(d,k)=(1,1)$ and $(2,2)$.
          \item For $\ell$ even it is $\Lambda_R^{[\ell]}=\ell\Lambda_R =
            2\Lambda_R^{(\ell)}$, $\ell_{[n]}=\ell$ and
            $m_{[n]}=3$. Thus, the quotient diamond is given by
            $(\Z_2,\Z_2,\Z_1,\Z_2,\Z_1,\xi_2,0)$ and we are again in
            case (III) of Example \ref{expl:DiamondSolutionsCyclci}. We get
            all 2 solutions $(d,k)=(2,1)$ and $(2,2)$ on $\Z_2$.
        \end{enumerate}

      \item For $\g$ with root system $E_8$, we have
        $\pi_1=\Z_1$. The simple roots are $\alpha_1,\ldots,\alpha_{8}$
        and $d_i=1$ for $1\leq i\leq 8$. The symmetrized Cartan matrix $\tilde
        C$ is given below. The fundamental dominant weights $\lambda_i$ are
        given as in \cite{Hum72}, Section 13.2, and are roots. The matrix
        $\id_W^R$ gives the coefficients of the fundamental dominant weights in
        the basis $\{\alpha_1,\ldots,\alpha_8\}$, and since $d_i=1$ for all
        $i$, also the values $(\lambda_i,\lambda_j)$ for $1\leq i,j\leq 8$.

        \begin{equation*}
          \tilde C=
          \begin{pmatrix}
            2& 0&-1& 0& 0& 0 & 0 & 0\\
            0& 2& 0&-1& 0& 0 & 0 & 0\\
           -1& 0& 2&-1& 0& 0 & 0 & 0\\
            0&-1&-1& 2&-1& 0 & 0 & 0\\ 
            0& 0& 0&-1& 2&-1 & 0 & 0\\
            0& 0& 0& 0&-1& 2 &-1 & 0\\
            0& 0& 0& 0& 0&-1 & 2 &-1\\
            0& 0& 0& 0& 0& 0 &-1 & 2
          \end{pmatrix}
          \quad
          \id_W^R= 
          \begin{pmatrix}
            4 & 5 & 7 &10 & 8 & 6 & 4 & 2 \\
            5 & 8 &10 &15 &12 & 9 & 6 & 3 \\
            7 &10 &14 &20 &16 &12 & 8 & 4 \\
           10 &15 &20 &30 &24 &18 &12 & 6 \\
            8 &12 &16 &24 &20 &15 &10 & 5 \\ 
            6 & 9 &12 &18 &15 &12 & 8 & 4 \\
            4 & 6 & 8 &12 &10 & 8 & 6 & 3 \\
            2 & 3 & 4 & 6 & 5 & 4 & 3 & 2
          \end{pmatrix}
        \end{equation*}

        The lattice diamonds, depending on $\ell$, are:
        \begin{enumerate}[(i)]
          \item For odd $\ell$ we have $A=B=C=D=\ell\Lambda_W=\ell\Lambda_R$. 
          \item For even $\ell$: same as in (i).
        \end{enumerate}

        We calculate the quotient diamonds for kernel
        $\Lambda'=\Lambda_R^{[\ell]}$ and compare it with Lusztig's kernel
        $2\Lambda_R^{(\ell)}$. 
        We then determine the solutions of the corresponding diamond-equations
        according to Example \ref{expl:DiamondSolutionsCyclci}.
        \begin{enumerate}[(i)]
          \item For $\ell$ odd it is $\Lambda_R^{[\ell]}=\ell\Lambda_R \neq
            2\ell\Lambda_R = 2\Lambda_R^{(\ell)}$, $\ell_{[n]}=\ell$ and
            $m_{[n]}=2$. Thus, the quotient diamond is given by
            $(\Z_1,\Z_1,\Z_1,\Z_1,\Z_1,1,0)$ and we are in case (I) of Example
            \ref{expl:DiamondSolutionsCyclci}. We get the only solution
            $(d,k)=(1,1)$.
          \item For $\ell$ even it is $\Lambda_R^{[\ell]}=\ell\Lambda_R =
            2\Lambda_R^{(\ell)}$, $\ell_{[n]}=\ell$ and
            $m_{[n]}=2$. We get the same diamond and solution as in (i).
        \end{enumerate}

      \item For $\g$ with root system $F_4$, we have
        $\pi_1=\Z_1$. The simple roots $\alpha_1,\alpha_{2}$ are long,
        $\alpha_3,\alpha_4$ are short, hence
        $d_1=d_2=2$ and $d_3=d_4=1$. The symmetrized Cartan matrix $\tilde
        C$ is given below. The fundamental dominant weights $\lambda_i$ are
        given as in \cite{Hum72}, Section 13.2, and are roots. The matrix
        $\id_W^R$ gives the coefficients of the fundamental dominant weights in
        the basis $\{\alpha_1,\ldots,\alpha_4\}$.

        \begin{equation*}
          \tilde C=
          \begin{pmatrix}
            4&-2& 0& 0\\
           -2& 4&-2& 0\\
            0&-2& 2&-1\\
            0& 0&-1& 2
          \end{pmatrix}
          \qquad
          \id_W^R= 
          \begin{pmatrix}
            4 & 6 & 4 & 2  \\
            6 &12 & 8 & 4  \\
            4 & 8 & 6 & 3  \\
            2 & 4 & 3 & 1 
          \end{pmatrix}
        \end{equation*}

        The lattice diamonds, depending on $\ell$, are:
        \begin{enumerate}[(i)]
          \item For odd $\ell$, we have $A=B=C=D=\ell\Lambda_W=\ell\Lambda_R$.
          \item For even $\ell$, we have $A=B=C=D=\ell\car{\frac12
            \lambda_1,\frac12\lambda_2,\lambda_3,\lambda_4}$.
        \end{enumerate}

        We calculate the quotient diamonds for kernel
        $\Lambda'=\Lambda_R^{[\ell]}$ and compare it with Lusztig's kernel
        $2\Lambda_R^{(\ell)}$. 
        We then determine the solutions of the corresponding diamond-equations
        according to Example \ref{expl:DiamondSolutionsCyclci}.
        \begin{enumerate}[(i)]
          \item For $\ell$ odd it is $\Lambda_R^{[\ell]}=\ell\Lambda_R \neq
            2\ell\Lambda_R = 2\Lambda_R^{(\ell)}$, $\ell_{[n]}=\ell$ and
            $m_{[n]}=1$. Thus, the quotient diamond is given by
            $(\Z_1,\Z_1,\Z_1,\Z_1,\Z_1,1,0)$ and we are in case (I) of Example
            \ref{expl:DiamondSolutionsCyclci}. We get the only solution
            $(d,k)=(1,1)$.
          \item For $\ell\equiv2\mod4$ it is
            $\Lambda_R^{[\ell]}=\ell\car{\frac12\alpha_1,
            \frac12\alpha_2,\alpha_3, \alpha_4} \neq
            \ell\Lambda_R = 2\Lambda_R^{(\ell)}$, $\ell_{[n]}=\ell$ and
            $m_{[n]}=1$. We get the same diamond and solution as in (i).
          \item For $\ell\equiv0\mod4$ it is
            $\Lambda_R^{[\ell]}=\ell\car{\frac12\alpha_1,
            \frac12\alpha_2,\alpha_3, \alpha_4}=2\Lambda_R^{(\ell)}$,
            $\ell_{[n]}=\ell$ and $m_{[n]}=1$. We get the same diamond and
            solution as in (i).
        \end{enumerate}

      \item For $\g$ with root system $G_2$, we have
        $\pi_1=\Z_1$. The simple root $\alpha_1$ is short and $\alpha_2$ is long, hence
        $d_1=1$ and $d_2=3$. The symmetrized Cartan matrix $\tilde
        C$ is given below. The fundamental dominant weights $\lambda_i$ are
        given as in \cite{Hum72}, Section 13.2, and are roots. The matrix
        $\id_W^R$ gives the coefficients of the fundamental dominant weights in
        the basis $\{\alpha_1,\ldots,\alpha_2\}$.

        \begin{equation*}
          \tilde C=
          \begin{pmatrix}
            2&-3\\
           -3& 6
          \end{pmatrix}
          \qquad
          \id_W^R= 
          \begin{pmatrix}
            2 & 3  \\
            1 & 2 
          \end{pmatrix}
        \end{equation*}

        The lattice diamonds, depending on $\ell$, are:
        \begin{enumerate}[(i)]
          \item For $3\nmid\ell$, we have
            $A=B=C=D=\ell\Lambda_W=\ell\Lambda_R$.
          \item For $3\mid\ell$, we have $A=B=C=D=\ell\car{\lambda_1,\frac13
            \lambda_2}$.
        \end{enumerate}

        We calculate the quotient diamonds for kernel
        $\Lambda'=\Lambda_R^{[\ell]}$ and compare it with Lusztig's kernel
        $2\Lambda_R^{(\ell)}$. 
        We then determine the solutions of the corresponding diamond-equations
        according to Example \ref{expl:DiamondSolutionsCyclci}.
        \begin{enumerate}[(i)]
          \item[(i.a)] For $3\nmid\ell$ and $\ell$ odd it is
            $\Lambda_R^{[\ell]}=\ell\Lambda_R \neq 2\ell\Lambda_R =
            2\Lambda_R^{(\ell)}$, $\ell_{[n]}=\ell$ and $m_{[n]}=6$. Thus, the
            quotient diamond is given by $(\Z_1,\Z_1,\Z_1,\Z_1,\Z_1,1,0)$ and
            we are in case (I) of Example \ref{expl:DiamondSolutionsCyclci}.
            We get the only solution $(d,k)=(1,1)$.
          \item[(i.b)] For $3\nmid\ell$ and $\ell$ even it is
            $\Lambda_R^{[\ell]}=\ell\Lambda_R = 2\Lambda_R^{(\ell)}$,
            $\ell_{[n]}=\ell$ and $m_{[n]}=6$. We get the same diamond and
            solution as in (i.a).
          \item[(ii.a)] For $3\mid\ell$ and $\ell$ odd it is
            $\Lambda_R^{[\ell]}=\ell\car{\alpha_1,
            \frac13\alpha_2} \neq 2\ell\car{\alpha_1,
            \frac13\alpha_2} = 2\Lambda_R^{(\ell)}$,
            $\ell_{[n]}=\ell/3$ and $m_{[n]}=2$. We get the same diamond and
            solution as in (i.a).
          \item[(ii.b)] For $3\mid\ell$ and $\ell$ even it is
            $\Lambda_R^{[\ell]}=\ell\car{\alpha_1,
            \frac13\alpha_2} = 2\Lambda_R^{(\ell)}$,
            $\ell_{[n]}=\ell/3$ and $m_{[n]}=2$. We get the same diamond and
            solution as in (i.a).
        \end{enumerate}
    \end{enumerate}

%

\end{document}